 \theoremstyle{plain}
 \newtheorem{thm}{Theorem}[section]
 \newtheorem{cor}[thm]{Corollary}
 \newtheorem{lem}[thm]{Lemma}
 \newtheorem{prop}[thm]{Proposition}
 \theoremstyle{definition}
 \theoremstyle{remark}
 \newtheorem{remark}{Remark}
\newcommand{\p}{p}
\newcommand{\h}{h}
\newcommand{\E}{K}
\newcommand{\pE}{\p_\E}
\newcommand{\hE}{\h_\E}
\newcommand{\he}{\h_\e}
\newcommand{\Gcal}{\mathcal G}
\newcommand{\Gcalp}{\bm{\Gcal}_\p}
\newcommand{\Fcaln}{\mathcal{F}_n}
\newcommand{\mesh}{\mathcal{T}}
\newcommand{\xperp}{\bm{x}^{\bot}}
\newcommand{\Hcalp}{\bm{\mathcal{H}}_p}
\newcommand{\Hcalpmt}{\bm{\mathcal{H}}_{p-2}}
\newcommand{\Gcalpperp}{\bm{\Gcal}_p^\perp}
\newcommand{\Gcalpmt}{\bm{\Gcal} _{\p-2}}
\newcommand{\un}{\bm{u}_n}
\newcommand{\untilde}{\widetilde{\bm{u}}_n}
\newcommand{\zntilde}{\widetilde{\bm{z}}_n}
\newcommand{\vn}{\bm{v}_n}
\newcommand{\deltan}{\bm{\delta}_n}
\newcommand{\vnbar}{\overline{\bm{v}}_n}
\newcommand{\qnbar}{\overline{{q}}_n}
\newcommand{\vntilde}{\widetilde{\bm{v}}_n}
\newcommand{\wn}{{\bm{w}}_n}
\newcommand{\wnE}{{\bm{w}}_n^K}
\newcommand{\Vbf}{\bm{V}}
\newcommand{\Vn}{\bm{V}_n}
\newcommand{\Zn}{\bm{Z}_n}
\newcommand{\Vntilde}{\widetilde{\bm{V}}_n}
\newcommand{\Zntilde}{\widetilde{\bm{Z}}_n}
\newcommand{\Wn}{\bm{W}_n}
\newcommand{\VnE}{\Vn(\E)}
\newcommand{\VnEtilde}{\widetilde{\Vbf}_n(\E)}
\newcommand{\Qn}{Q_n}
\newcommand{\e}{e}
\newcommand{\EE}{\mathcal E^\E}
\newcommand{\q}{q}
\newcommand{\qalphaperp}{\bm{\q_{\alpha}}^\oplus}
\newcommand{\qpmtperp}{\bm{\q}_{\p-2}^\oplus}
\newcommand{\qpmo}{\q_{\p-1}}
\newcommand{\qboldtildepmt}{\widetilde{\bm{\q}} _{\p-2}}
\newcommand{\SE}{S^\E}
\newcommand{\SED}{\SE_D}
\newcommand{\alphabold}{\boldsymbol {\alpha}}
\newcommand{\qalpha}{\q_{\alpha}}
\newcommand{\palpha}{\p_{\alpha}}
\renewcommand{\a}{a}
\renewcommand{\b}{b}
\newcommand{\bn}{\b_n}
\newcommand{\aE}{\a^\E}
\newcommand{\bE}{\b^\E}
\newcommand{\ubf}{\bm{u}}
\newcommand{\vbf}{\bm{v}}
\newcommand{\PinablapE}{\bm{\Pi}^{\nabla,\E}_\p}
\newcommand{\Pinablap}{\bm{\Pi}^{\nabla}_\p}
\newcommand{\PizpmtE}{\bm{\Pi} ^{0,\E}_{\p-2}}
\newcommand{\Pizpmt}{\bm{\Pi} ^{0}_{\p-2}}
\newcommand{\PizpmoE}{\Pi^{0,\E}_{\p-1}}
\newcommand{\qboldp}{\bm{\q}_\p}
\newcommand{\qboldpmt}{\bm{\q}_{\p-2}}
\newcommand{\n}{\bm{n}}
\newcommand{\nE}{\n^\E}
\newcommand{\taun}{\mathcal T_n}
\newcommand{\Deltabold}{\boldsymbol \Delta}
\newcommand{\fbf}{\bm{f}}
\newcommand{\zerobold}{\boldsymbol 0}
\newcommand{\dalethn}{\daleth_n}
\newcommand{\PpmtEvec}{[\mathbb{P}_{p-2}(\E)]^2}
\newcommand{\PpEvec}{[\mathbb{P}_{p}(\E)]^2}
\newcommand{\xbf}{\bm{x}}
\newcommand{\xbfE}{\xbf_\E}
\newcommand{\Vcaln}{\mathcal V_n}
\newcommand{\En}{\mathcal E_n}
\newcommand{\VcalnI}{\Vcaln^I}
\newcommand{\EnI}{\En^I}
\newcommand{\VcalnB}{\Vcaln^B}
\newcommand{\EnB}{\En^B}
\newcommand{\qn}{q_n}
\newcommand{\sn}{s_n}
\newcommand{\an}{a_n}
\newcommand{\anE}{\an^\E}
\newcommand{\varphibold}{\boldsymbol{\varphi}}
\newcommand{\dofbf}{\textbf{dof}}
\newcommand{\Ncal}{\mathcal N}
\newcommand{\Etilde}{\widetilde \E}
\newcommand{\Tn}{n_{\mathrm{el}}}
\DeclareMathOperator{\card}{card}
\newcommand{\pbf}{\bm{p}}
\newcommand{\pbfEn}{\pbf^{\En}}
\newcommand{\Epsilonn}{n_{\mathrm{edge}}}
\newcommand{\NV}{N_V}
\newcommand{\cQ}{\mathcal{Q}}
\newcommand{\alphacoer}{\alpha_*(p)}
\newcommand{\alphacoerhat}{\widehat{\alpha}_*(p)}
\newcommand{\alphacont}{\alpha^*(p)}
\newcommand{\alphaconthat}{\widehat{\alpha}^*(p)}
\newcommand{\Dv}{\mathbf{Dv}^\E}
\newcommand{\Dvtilde}{\widetilde{\mathbf{Dv}}^\E}
\newcommand{\Id}{\operatorname{Id}}
\DeclareMathOperator{\TE}{T^{\E}_{\mathsf{StP}}}
\DeclareMathOperator{\TF}{T_{\mathsf{StP}}}
\newcommand{\sumE}{\sum_{\E\in \taun}}
\newcommand{\upi}{\bm{u}_\pi}
\renewcommand{\div}{\operatorname{div}}
\renewcommand{\wn}{\bm{w}_n}
\newcommand{\fc}{\mathfrak{c}}
\newcommand{\fci}{\fc_i}
\newcommand{\fC}{\mathfrak{C}}
\newcommand{\ugamma}{{\underline{\gamma}}}
\newcommand{\cK}{\mathcal{K}}
\newcommand{\alpham}{|\alphabold|}
\newcommand{\dalpha}{\partial^{\alphabold}}
\DeclareMathOperator{\dist}{dist}
\author{
\normalsize{
}}
\date{}
\title{{\textbf{$\p$- and $\h\p$- virtual elements for the Stokes problem}}}
\date{}
\author{{A. Chernov \thanks{Inst. f\"ur Mathematik,  Universit\"at Carl von Ossietsky, Oldenburg, Germany (alexey.chernov@uni-oldenburg.de)},\quad
C. Marcati \thanks{Seminar for Applied Mathematics - ETH Z\"urich, Z\"urich (carlo.marcati@sam.math.ethz.ch )}, \quad
L. Mascotto\thanks{Fakult\"at f\"ur Mathematik, Universit\"at Wien, 1090 Vienna, Austria (lorenzo.mascotto@univie.ac.at)}}}
\begin{document}
\maketitle
\begin{abstract}
\noindent
We analyse the $\p$- and~$\h\p$-versions of the virtual element method (VEM) for the the Stokes problem on a polygonal domain.
The key tool in the analysis is the existence of a bijection between Poisson-like and Stokes-like VE spaces for the velocities.
This allows us to re-interpret the standard VEM for Stokes as a VEM, where the test and trial discrete velocities are sought in Poisson-like VE spaces.
The upside of this fact is that we inherit from~\cite{hpVEMcorner} an explicit analysis of best interpolation results in VE spaces, as well as stabilization estimates that are explicit in terms of the degree of accuracy of the method.
We prove exponential convergence of the $hp$-VEM for Stokes problems with regular right-hand sides.
We corroborate the theoretical estimates with numerical tests for both the $p$- and $hp$-versions of the method.
\medskip

\noindent
\textbf{AMS subject classification}: 65N12, 65N15, 65N30, 76D07

\medskip\noindent
\textbf{Keywords}: Stokes equation; virtual element methods; polygonal meshes; $\p$- and $\h\p$-Galerkin methods
\end{abstract}

\section{Introduction} \label{section:introduction}
The virtual element method (VEM) is an increasingly popular tool in the approximation to solutions of fluido-static and dynamic problems in polygonal/polyhedral meshes.
In particular we recall: the very first paper on low-order VEM for Stokes~\cite{streamvirtualelementformulationstokesproblempolygonalmeshes};
its high-order conforming~\cite{BLV_StokesVEMdivergencefree} and nonconforming versions~\cite{CGM_nonconformingStokes, liu2017nonconforming};
conforming~\cite{NavierStokesVEM} and nonconforming VEM for the Navier-Stokes equation~\cite{nc_VEM_NavierStokes};
mixed VEM for the pseudo-stress-velocity formulation of the Stokes problem~\cite{Gatica-1}; mixed VEM for quasi-Newtonian flows~\cite{caceres2018mixed}; mixed VEM for the Navier-Stokes equation~\cite{gatica2018mixed};
other variants of the VEM for the Darcy problem~\cite{wang2019divergence, vacca2018h, caceres2017mixed};
analysis of the Stokes complex in the VEM framework~\cite{NavierStokes-complex-VEM, VEM-Stokes-complex-3D};
a stabilized VEM for the unsteady incompressible Navier-Stokes equations~\cite{irisarri2019stabilized};
implementation details~\cite{bricksVEM}.

Notwithstanding, all the above articles refer to the $\h$-version of the method
(i.e., when the convergence is achieved by refinement of the underlying mesh while keeping the order of the approximation fixed) and the convergence analysis
is performed assuming enough smoothness of the solutions to the problem under consideration.
This is not the case when the domain of the equation is polygonal/polyhedral.
In fact, even with smooth data, solutions are expected to have singularities at the corners of the domain; see, e.g., \cite{Guo2006, Marcati2019}.
More precisely, it can be proven that they belong to Kondrat'ev spaces,
i.e., weighted Sobolev spaces with weight given by a function of the distance from the corners of the domain; see definitions~\eqref{eq:cK} and~\eqref{eq:cKanalytic} below.

For this reason, employing $\h\p$ spaces arises as a natural technique in order to construct methods, which lead to an exponential decay of the error.
This approach has been investigated in a plethora of works, in the framework of conforming and nonconforming finite element methods.
We recall the following works, which relate to the $\h\p$ approximation of problems of Stokes and Navier-Stokes type:
$\h\p$ dG primal and mixed methods for the Stokes equation~\cite{schotzau2003exponential, schwab1999mixed}; 
mixed discontinuous Galerkin (dG) finite element methods for the Navier-Stokes equation~\cite{SMS20_888};
error indicator for the Stokes equation~\cite{bernardi2001error}; analysis of Stokes flows~\cite{gerdes1999hp};
mixed $\h\p$-dG methods for incompressible flows~\cite{schotzau2002mixed, schotzau2004mixed, schotzau2003stabilized} and their a posteriori version~\cite{paul2005energy};
spectral elements for Stokes eigenvalue problems~\cite{shan2017triangular}.

The main contribution of this paper is given by the development of the analysis of $\p$- and~$\h\p$-VEM for the approximation of solutions to the Stokes problem,
building upon the analysis for $\p$- and $\h\p$-VEM for the Poisson problem in~\cite{hpVEMbasic, hpVEMcorner}.
The key tool in the analysis is the proof of the existence of a bijection between Poisson-like \cite{VEMvolley} and Stokes-like \cite{BLV_StokesVEMdivergencefree} VE spaces for the velocities.
This allows us to re-interpret the standard VEM for Stokes~\cite{BLV_StokesVEMdivergencefree} as a VEM,
where the test and trial discrete velocities are sought in Poisson-like VE spaces.
The upside of this fact is that we inherit from~\cite{hpVEMcorner} an explicit analysis of best interpolation results in VE spaces, as well as stabilization estimates that are explicit in terms of the degree of accuracy of the method.

We prove that the $\h\p$-version of the method converges exponentially in terms of the cubic root of the number of degrees of freedom
when the right-hand side of the Stokes problem in a polygonal domain is analytic.
In addition, we also show that the $\p$-version of the method converges algebraically if the solution is sufficiently regular, and exponentially in terms of the degree of accuracy when the solution is analytic.

In the remainder of this section, we introduce some notation, the
continuous problem we are interested in, namely a Stokes problem in a two
dimensional polygonal domain, and
discuss the regularity of solutions to this kind of problems in polygonal
domains. Finally, we conclude this section by presenting the structure of the paper.
\subsubsection*{Notation} 
We employ the standard notation for Sobolev spaces~\cite{adamsfournier}. More precisely, given a domain~$D \subset \mathbb R^d$, $d=1,2$,
we denote the Sobolev space of integer order~$s\in \mathbb N$ by~$H^s(D)$.
We endow~$H^s$ with standard Sobolev inner products, seminorms and norms by
\[
(\cdot, \cdot)_{s,D} , \quad \quad \vert \cdot \vert_{s,D}, \quad \quad \Vert \cdot \Vert_{s,D}.
\]
Fractional Sobolev spaces can be defined via interpolation theory.
Moreover, we set~$\mathbb P_\p(D)$ as the space of polynomials of
total degree at most~$\p$ over the domain~$D$

As customary, given two positive quantities~$a$ and~$b$, we write~$a \lesssim b$
meaning that there exists a positive constant~$c$ independent of the discretization parameters such that~$a \le c \, b$.
Moreover, we write~$a\simeq b$ if~$a\lesssim b$ and~$b\lesssim a$ at once.

We write $\mathbb{N}_0 = \mathbb{N}\cup\{0\}$ and $\mathbb{R}^+ = \{x\in \mathbb{R}:x>0\}$.
\subsubsection*{The continuous problem} 
Let~$\Omega \subset \mathbb R^2$ be a polygonal domain with boundary~$\Gamma$ and~$\fbf \in [L^2(\Omega)]^2$.
We want to approximate the solution to the following problem: find~$\ubf$ and~$s$ such that
\begin{equation} \label{Stokes:strong}
\begin{cases}
-\Deltabold \ubf - \nabla s = \fbf & \text{in } \Omega\\
\div\ubf = 0 & \text{in } \Omega\\
\ubf = \zerobold & \text{on } \Gamma.\\
\end{cases}
\end{equation}
Define the spaces
\[
\Vbf :=[H^1_0(\Omega)]^2,\quad \quad  Q := L^2_0(\Omega) = \left\{q \in L^2(\Omega) ~:~ \int_\Omega q = 0 \right\},
\]
and the bilinear forms
\begin{equation} \label{continuous-global-bf}
\a(\ubf, \vbf) := (\nabla \ubf , \nabla \vbf)_{0,\Omega}, \quad\quad \b(\vbf, q) = (\div\vbf, q)_{0,\Omega} \quad \forall \ubf,\, \vbf \in \Vbf,\; \forall q \in Q.
\end{equation}
The weak formulation of problem~\eqref{Stokes:strong} reads
\begin{equation} \label{Stokes:weak}
\begin{cases}
\text{find } (\ubf, s) \in \Vbf \times Q \text{ such that} \\
\a(\ubf,\vbf) + \b(\vbf,s) = (\fbf, \vbf) & \forall \vbf \in \Vbf \\
\b(\ubf, q) = 0 & \forall q \in Q.\\
\end{cases}
\end{equation}
Problem~\eqref{Stokes:weak} is well-posed: see, e.g., \cite{BrezziFortin}.

\subsubsection*{Regularity of the solution}
The regularity of the solution~$(\ubf, s)$ to the Stokes problem~\eqref{Stokes:strong} in the polygonal domain~$\Omega$ depends on the shape of the domain.
In particular, even if the right-hand side~$\fbf$ is analytic, the corners of the domain give rise to corner singularities in the solution, which limit its regularity in the scale of classical Sobolev spaces.
In order to properly characterize the solution to the Stokes problem, we
resort to corner-weighted Sobolev spaces, of the kind firstly proposed in~\cite{Kondratev1967}.
\medskip 

\noindent Assume that the polygon~$\Omega$ has~$n_c\in\mathbb{N}$ corners, which we denote by~$\fC = \{\fc_i\in \mathbb{R}^2, i=1\dots, n_c\}$.
Set the amplitude of the internal angles at each corner~$\fci \in\fC$ as~$\phi_{\fci} \in (0, 2\pi)\setminus\{\pi\}$
and the Euclidean norm in~$\mathbb{R}^2$ by~$|\cdot|$.
Then, given the vector~$\ugamma = \{\gamma_{\fci}\in \mathbb{R}, \fci \in\fC\} \in \mathbb R^{n_c}$ and~$k\in \mathbb N_0$, define the weight function
\[
r^{k-\ugamma} (\xbf) : = \prod_{i=1}^{n_c} |\xbf - \fci|^{k-\gamma_{\fci}} \qquad \forall \xbf \in \Omega.
\]
For~$\ell\in \mathbb{N}_0$ and~$\ugamma\in \mathbb{R}^{n_c}$, introduce the seminorm and associated norm
\[
| v |^2_{\cK^\ell_\ugamma(\Omega)} := \sum_{\alphabold=(\alpha_1,\alpha_2)\in [\mathbb N_0]^2 , \,\alpham = \ell}\| r^{\alpham  -\ugamma}\dalpha v \|_{L^2(\Omega)}^2, \qquad \| v \|^2_{\cK^\ell_\ugamma(\Omega)} := \sum_{k=0}^\ell| v |_{\cK^k_\ugamma(\Omega)}^2,
\]
where we use the notation~$\dalpha = \partial^{\alpha_1}_{x_1}\partial^{\alpha_2}_{x_2}$.
We define the homogeneous Kondrat'ev space as
\begin{equation} \label{eq:cK}
\cK^{\ell}_\ugamma(\Omega) := \left\{ v\in L^2(\Omega) : \|v\|_{\cK^\ell_\ugamma(\Omega)}<\infty \right\}.
\end{equation}
Furthermore, we introduce the class of weighted analytic functions
\begin{equation} \label{eq:cKanalytic}
\cK^{\varpi}_\ugamma(\Omega) := \left\{ v\in \bigcap_{\ell\in \mathbb{N}_0}\cK^{\ell}_\ugamma(\Omega) : \exists A\in \mathbb{R}\text{ such that }|v|_{\cK^\ell_\ugamma(\Omega)}\leq A^{\ell+1}\ell! \,\forall\ell\in\mathbb{N}_0 \right\}.
\end{equation}
\medskip 

\noindent For each vertex~$\fci \in\fC$ of~$\Omega$, $\lambda_{\fci}$ denotes the smallest positive solution to the following equation:
\begin{equation}
  \label{eq:sing-exp}
\left(\sin(\lambda_{\fci} \phi_{\fci})\right)^2 = \lambda_{\fci}^2 \left(\sin\phi_{\fci}  \right)^2.
\end{equation}
Observe that, for all~$\phi_{\fci} \in (0, 2\pi)\setminus\{\pi\}$, we have~$\lambda_{\fci}>1/2$.
Furthermore, for all~$0 < \phi_c < \pi$, i.e., in presence of convex corners, we have~$\lambda_{\fci} = 1$.

The following result is a finite regularity shift result in weighted Sobolev spaces for solutions to the Stokes problem;
see~\cite[Theorem 5.7]{Guo2006} and~\cite[Section 5]{Kozlov2001}; see also~\cite[Proposition 1.8]{Marcati2019} for the case of homogeneous spaces.
\begin{thm} \label{theorem:regularity}
Let~$\ell\in \mathbb{N}_0$ and ~$\ugamma$ be such that~$0<\gamma_{\fci} - 1 < \lambda_{\fci}$ for all~${\fci} \in\fC$.
Assume that~$\fbf\in\left[\cK^{\ell}_{\ugamma-2}(\Omega)  \right]^2$ and let~$(\ubf,s)\in \Vbf\times Q$ be the (unique) solution to~\eqref{Stokes:strong} with right-hand side~$\fbf$.
Then, there exists~$C>0$ such that
\begin{equation}    \label{eq:reg-finite}
\| \ubf \|_{\cK^{\ell+2}_\ugamma(\Omega)}   + \|s\|_{\cK^{\ell+1}_{\ugamma-1}(\Omega)} \leq C \|\fbf\|_{\cK^\ell_{\ugamma-2}(\Omega)}.
\end{equation}
\end{thm}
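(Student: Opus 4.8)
The plan is to obtain the weighted regularity shift as a consequence of the classical elliptic theory of Kondrat'ev for the Stokes system, reducing the global statement to a local analysis near each corner. First I would introduce a partition of unity subordinate to a covering of $\overline\Omega$ by neighbourhoods $U_i$ of the corners $\fc_i$ (each small enough to contain no other corner and to meet $\Gamma$ only in the two edges incident to $\fc_i$) together with an interior/regular-boundary patch $U_0$ on which $\Omega$ has a smooth (or flat) boundary. Writing $\ubf = \sum_i \chi_i \ubf$, $s = \sum_i \chi_i s$, the contribution from $U_0$ is controlled by standard (unweighted) $H^{\ell+2}\times H^{\ell+1}$ elliptic regularity for Stokes on smooth domains, and since the weights $r^{k-\ugamma}$ are bounded above and below on $\mathrm{supp}\,\chi_0$ this immediately gives the desired bound for that piece. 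The substance is in the corner patches, where after a translation we may assume $\fc_i = 0$.

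Next, on each corner patch I would commute the equations with $\chi_i$: the pair $(\chi_i\ubf, \chi_i s)$ solves a Stokes problem on the infinite sector $\mathcal S_i$ of opening $\phi_{\fci}$ with right-hand side $\chi_i\fbf$ plus lower-order commutator terms of the form $[\Deltabold,\chi_i]\ubf$, $[\nabla,\chi_i]s$, $(\div \chi_i)\ubf$, all supported in an annular region where $r\simeq 1$, hence harmless and absorbable by induction on $\ell$ (they lie in $\cK^{\ell}_{\ugamma-2}$ with norm controlled by $\|\ubf\|_{\cK^{\ell+1}_\ugamma}+\|s\|_{\cK^{\ell}_{\ugamma-1}}$, which is the previous induction step). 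On the sector one invokes the Mellin-transform / Fredholm theory for the Stokes operator in weighted spaces: the operator
\[
(\ubf,s)\mapsto (-\Deltabold\ubf-\nabla s,\ \div\ubf)
\]
is an isomorphism $\cK^{\ell+2}_\ugamma(\mathcal S_i)\times \cK^{\ell+1}_{\ugamma-1}(\mathcal S_i)\to \cK^{\ell}_{\ugamma-2}(\mathcal S_i)\times \cK^{\ell+1}_{\ugamma}(\mathcal S_i)$ precisely when the line $\mathrm{Re}\,z = \gamma_{\fci}-1$ is free of eigenvalues of the associated parameter-dependent (Mellin-symbol) operator pencil; those eigenvalues are exactly the roots $\lambda_{\fci}$ of the characteristic equation~\eqref{eq:sing-exp} together with their negatives and the trivial ones. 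The hypothesis $0<\gamma_{\fci}-1<\lambda_{\fci}$ places the weight line strictly between the first positive eigenvalue and the exceptional value $0$, so no eigenvalue is crossed and the isomorphism holds. This is the step I expect to be the main obstacle — not because it is conceptually hard but because it is exactly the technical core, and one must (i) correctly identify the operator pencil and its spectrum for the Stokes system (the factor $\lambda^2(\sin\phi)^2$ in~\eqref{eq:sing-exp} comes from the Stokes, not Laplace, symbol), (ii) verify the compatibility/side conditions on the right-hand side (the $\div$ component must have mean-type constraints or lie in the correct weighted space), and (iii) handle the mild failure of ellipticity at the weight exponent $0$ for the pressure, which is why one works with $\ugamma - 1$ for $s$ rather than $\ugamma$.

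Finally I would reassemble: summing the corner estimates and the regular-patch estimate, using that the $\chi_i$ form a partition of unity and that the cut-off commutators have been absorbed, yields
\[
\|\ubf\|_{\cK^{\ell+2}_\ugamma(\Omega)} + \|s\|_{\cK^{\ell+1}_{\ugamma-1}(\Omega)} \lesssim \|\fbf\|_{\cK^{\ell}_{\ugamma-2}(\Omega)} + \|\ubf\|_{\cK^{\ell+1}_\ugamma(\Omega)} + \|s\|_{\cK^{\ell}_{\ugamma-1}(\Omega)},
\]
and one closes the argument by induction on $\ell$, the base case $\ell=0$ being the statement that $(\ubf,s)\in\Vbf\times Q$ with $\|\ubf\|_{1,\Omega}+\|s\|_{0,\Omega}\lesssim\|\fbf\|_{0,\Omega}$ lies in $\cK^2_\ugamma\times\cK^1_{\ugamma-1}$, i.e. the lowest-order weighted shift, which is itself the $\ell=0$ instance of the sectorial isomorphism above (here one must check $H^1\subset\cK^1_\ugamma$ locally at the corner, valid since $\gamma_{\fci}-1<\lambda_{\fci}$ and $\lambda_{\fci}>1/2$). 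Since the whole construction is local and the cited references~\cite[Theorem 5.7]{Guo2006}, \cite[Section 5]{Kozlov2001}, \cite[Proposition 1.8]{Marcati2019} carry out precisely this Mellin analysis for Stokes, in practice I would state the result as a direct quotation of those theorems after matching the notational conventions, rather than redo the Fredholm theory from scratch.
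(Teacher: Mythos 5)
The paper offers no proof of this theorem at all: it is stated as a known regularity-shift result and attributed directly to \cite[Theorem 5.7]{Guo2006}, \cite[Section 5]{Kozlov2001}, and \cite[Proposition 1.8]{Marcati2019}, which is exactly what you conclude by saying you would quote those theorems after matching conventions, and your sketch (partition of unity, commutators absorbed by induction on $\ell$, Mellin/operator-pencil analysis on the sector with the weight line $\operatorname{Re} z = \gamma_{\fci}-1$ avoiding the spectrum determined by~\eqref{eq:sing-exp}) is a faithful outline of what those references actually do. The only blemish is a small indexing slip in your sectorial isomorphism, where the divergence component should land in $\cK^{\ell+1}_{\ugamma-1}(\mathcal S_i)$ rather than $\cK^{\ell+1}_{\ugamma}(\mathcal S_i)$ under the paper's weight convention.
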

Furthermore, if the right-hand side belongs to analytic weighted spaces, then also the solution to the Stokes problem belongs to the same spaces, as stated in the following result; see~\cite[Theorem 5.7]{Guo2006}.
\begin{thm} \label{theorem:regularity2}
Let~$\ugamma$ be such that~$0<\gamma_\fc - 1 <\lambda_\fc$ for all~$\fc\in\fC$.
Let~$\fbf\in \left[  \cK^\varpi_{\ugamma-2}(\Omega) \right]^2$ and~$(\ubf, s)\in \Vbf\times Q$ be the solution to~\eqref{Stokes:strong} with right-hand side $\fbf$.
Then~$\ubf \in \left[ \cK^\varpi_{\ugamma}(\Omega) \right]^2$ and~$s\in \cK^\varpi_{\ugamma-1}(\Omega) $.
\end{thm}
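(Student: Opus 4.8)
\medskip
\noindent\emph{Proof strategy} (following the approach of~\cite{Guo2006,Marcati2019}). The plan is to upgrade the finite-order shift estimate of Theorem~\ref{theorem:regularity} to the analytic bound of~\eqref{eq:cKanalytic} by an induction on the differentiation order~$\ell$, combining a dyadic localisation near the corners of~$\Omega$ with the classical interior (and straight-boundary) analytic regularity theory for the constant-coefficient Stokes operator with homogeneous Dirichlet data. Throughout I would fix an analyticity constant~$A_\fbf$ with~$|\fbf|_{\cK^\ell_{\ugamma-2}(\Omega)}\le A_\fbf^{\ell+1}\ell!$ for all~$\ell\in\mathbb{N}_0$, and invoke Theorem~\ref{theorem:regularity} with~$\ell=0$ as a ``seed'': $\ubf\in[\cK^2_\ugamma(\Omega)]^2$ and~$s\in\cK^1_{\ugamma-1}(\Omega)$, with norms controlled by~$\|\fbf\|_{\cK^0_{\ugamma-2}(\Omega)}$.

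First I would cover~$\overline\Omega$ by a fixed interior piece~$\Omega_0$, staying at a positive distance from~$\fC$, and, for each~$\fc\in\fC$, a sectorial neighbourhood~$\Omega_\fc$ on which~$\fc$ is the only corner of~$\Omega$ and~$\Gamma\cap\partial\Omega_\fc$ consists of two straight segments emanating from~$\fc$. On~$\Omega_0$ the weights~$r^{k-\ugamma}$ are bounded from above and below by positive constants, so there the~$\cK$-seminorms are equivalent to ordinary Sobolev seminorms, and the claim reduces to the classical analytic regularity of~$(\ubf,s)$ for analytic data, which holds up to the straight edges of~$\Gamma$ on which~$\ubf$ vanishes and gives~$|\ubf|_{H^\ell(\Omega_0')}+|s|_{H^\ell(\Omega_0')}\le A_0^{\ell+1}\ell!$ on a slightly shrunk set~$\Omega_0'$. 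It then remains to treat a single corner neighbourhood.

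On~$\Omega_\fc$, for~$j\in\mathbb{N}_0$ I would use the dyadic annuli~$\mathcal{A}_j:=\{\xbf\in\Omega_\fc:2^{-j-1}<|\xbf-\fc|<2^{-j}\}$ and their overlapping enlargements~$\widehat{\mathcal{A}}_j:=\mathcal{A}_{j-1}\cup\mathcal{A}_j\cup\mathcal{A}_{j+1}$, which cover~$\Omega_\fc$ with bounded overlap. The dilation~$\Phi_j(\xbf):=2^{j}(\xbf-\fc)$ maps~$\widehat{\mathcal{A}}_j$ onto a fixed reference annular sector~$\widehat{\mathcal{A}}$, and~\eqref{Stokes:strong} is covariant under it: $\ubf_j:=\ubf\circ\Phi_j^{-1}$ and~$s_j:=2^{-j}\,s\circ\Phi_j^{-1}$ solve a Stokes problem on~$\widehat{\mathcal{A}}$, with homogeneous Dirichlet data on the straight part of~$\partial\widehat{\mathcal{A}}$ and right-hand side~$\fbf_j:=2^{-2j}\,\fbf\circ\Phi_j^{-1}$. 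Covering a smaller reference sector by finitely many balls — and half-balls along the Dirichlet boundary — all compactly contained in~$\widehat{\mathcal{A}}$, and applying on each the classical interior/boundary analytic estimate for the Stokes operator, whose constant is~$j$-independent by scale invariance, gives, for some~$\rho\in(0,1)$,
\[
\|\dalpha\ubf_j\|_{L^2(\mathcal{A})}+\|\dalpha\nabla s_j\|_{L^2(\mathcal{A})}\;\lesssim\;\rho^{-\alpham}\,\alpham!\,\Big(\|\ubf_j\|_{L^2(\widehat{\mathcal{A}})}+\|\nabla s_j\|_{L^2(\widehat{\mathcal{A}})}+\text{(terms with $\le\alpham$ derivatives of $\fbf_j$)}\Big).
\]
I would then transport this back to~$\mathcal{A}_j$. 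Since~$r^{m-\ugamma}\simeq 2^{-j(m-\gamma_\fc)}$ on~$\mathcal{A}_j$ (up to a factor geometric in~$m$, which is absorbed into~$\rho$), a change of variables shows that the dilation factors produced by~$\Phi_j$ for~$\ubf$ — and, thanks to the extra~$2^{-j}$, for~$s$ — are matched exactly by the weights~$r^{\alpham-\ugamma}$ acting on~$\dalpha\ubf$ and~$r^{\alpham-(\ugamma-1)}$ acting on~$\dalpha\nabla s$, leaving a common factor~$2^{j(\gamma_\fc-1)}$ on the left-hand side; the same change of variables turns each plain~$L^2$-norm on the right-hand side into the corresponding~$\cK$-norm over~$\widehat{\mathcal{A}}_j$, carrying the reciprocal factor~$2^{-j(\gamma_\fc-1)}$, so the two powers of~$2$ cancel and the inequality becomes~$j$-independent. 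Squaring and summing over~$j$ (bounded overlap, hence~$\sum_j\|\cdot\|^2_{\widehat{\mathcal{A}}_j}\lesssim\|\cdot\|^2_{\Omega_\fc}$), and using Theorem~\ref{theorem:regularity} to bound~$\|\ubf\|_{\cK^2_\ugamma(\Omega)}$ and~$\|s\|_{\cK^1_{\ugamma-1}(\Omega)}$ together with the analytic datum hypothesis to bound~$|\fbf|_{\cK^k_{\ugamma-2}(\Omega)}$ for~$k\le\ell$, I reach a bound of the form~$|\ubf|_{\cK^\ell_\ugamma(\Omega_\fc)}+|s|_{\cK^{\ell+1}_{\ugamma-1}(\Omega_\fc)}\lesssim\rho^{-\ell}\ell!$ times a finite sum of lower-order~$\cK$-norms of~$\ubf$, $s$ and of~$\fbf$ (with~$\ell=\alpham$). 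An induction on~$\ell$, in which the lower-order terms already satisfy the target bound~$\le A^{k+1}k!$ for~$k<\ell$, together with the summation of the resulting convolution-type combinatorial factors, then yields~$|\ubf|_{\cK^\ell_\ugamma(\Omega)}+|s|_{\cK^{\ell+1}_{\ugamma-1}(\Omega)}\le A^{\ell+1}\ell!$ for all~$\ell$; combined with the interior bound on~$\Omega_0'$ this is precisely~$\ubf\in[\cK^\varpi_\ugamma(\Omega)]^2$ and~$s\in\cK^\varpi_{\ugamma-1}(\Omega)$.

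The hard part is the interior/straight-boundary analytic estimate for the Stokes system with a constant that is genuinely uniform over all the dyadic scales: this is exactly where the scale invariance of the Stokes operator and the uniform well-posedness of the rescaled problems on~$\widehat{\mathcal{A}}$ are used, the latter relying on the sign condition~$0<\gamma_\fc-1<\lambda_\fc$, which is also the condition that makes the seed weighted norms of Theorem~\ref{theorem:regularity} finite. A secondary, bookkeeping-type difficulty is the reconciliation of the two weight exponents: since the pressure is only defined up to an additive constant, in the scale-by-scale estimates one controls~$\nabla s$ rather than~$s$, which is precisely why the natural weight for~$s$ is the shifted one~$\ugamma-1$; the~$\cK^\varpi_{\ugamma-1}$ bound on~$s$ itself is then recovered from its gradient together with the normalisation~$s\in Q$ and, once more, the finite-order estimate of Theorem~\ref{theorem:regularity}.
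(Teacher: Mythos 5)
The paper does not actually prove Theorem~\ref{theorem:regularity2}: the statement is quoted from the literature and its proof is delegated to~\cite[Theorem~5.7]{Guo2006} (see also~\cite{Marcati2019}). Your outline reconstructs precisely the argument of those references --- seeding with the finite-order shift of Theorem~\ref{theorem:regularity}, dyadic annuli rescaled to a fixed reference sector, scale-invariant local analytic (Morrey--Nirenberg type) estimates for the Stokes system up to the straight Dirichlet edges, the weight bookkeeping that produces the $\ugamma-1$ shift for the pressure, and an induction on the differentiation order with the usual convolution combinatorics --- so it is consistent with the proof the paper relies on, with the caveat that the local analytic estimates and the induction are themselves the substantial content of the cited theorems rather than something you establish. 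The one imprecision is where the condition $0<\gamma_{\fc}-1<\lambda_{\fc}$ enters: the rescaled problems on the annuli are corner-free, so their uniform a priori estimates do not depend on it; the condition is used only through the Mellin/operator-pencil analysis underlying the weighted shift estimate of Theorem~\ref{theorem:regularity}, i.e., exclusively through the seed, exactly as the final clause of your last paragraph already says.
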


\subsubsection*{Structure of the paper.}
In Section~\ref{section:VEM}, we construct the VEM for the approximation of solutions to problem~\eqref{Stokes:weak}.
Differently from the standard approach of~\cite{BLV_StokesVEMdivergencefree},
we show that the VEM for the Stokes equation can be re-interpreted as a VEM where the velocity space is Poisson-like~\cite{VEMvolley}.
Section~\ref{section:a-priori} is concerned with the derivation of a priori estimates on velocities and pressures.
Among the key points here, we prove the validity of the inf-sup condition and stabilization bounds, which are explicit in terms of the degree of accuracy of the method.
The exponential convergence for the $\p$- and $\h\p$-versions of the method are theoretically proven in Section~\ref{section:convergence}, and numerically validated in Section~\ref{section:numerical-experiments}.
We draw some conclusions in Section~\ref{section:conclusions}.

\section{Meshes and the virtual element method} \label{section:VEM}
In this section, we present the virtual element method for the approximation of solutions to~\eqref{Stokes:weak}.
More precisely, we begin by introducing sequences of polygonal meshes partitioning the domain~$\Omega$ and their properties in Section~\ref{subsection:meshes}.
Next, in Section~\ref{subsection:VES-Stokes}, we recall the virtual element spaces introduced in~\cite{BLV_StokesVEMdivergencefree},
whereas, in Section~\ref{subsection:discrete-forms}, we construct computable bilinear forms and exhibit the method.
We devote, then, Section~\ref{subsection:VEM-Poisson} to recalling the standard virtual element method from~\cite{VEMvolley}.
Indeed, we show that the virtual element method for the Stokes equation can be re-interpreted as a method, where the velocity is sought, in Poisson-like virtual element spaces.
This fact will play an important role in the analysis presented in Section~\ref{section:a-priori} below.

\subsection{Meshes} \label{subsection:meshes}
Here, we introduce the polygonal meshes upon which we will construct the virtual element method.
Specifically, we consider sequences~$\{\taun\}_{n\in \mathbb N}$ of meshes which partition the domain~$\Omega$ into conforming, nonoverlapping polygons.
Fix~$n \in \mathbb N$, i.e., fix one of the meshes in the sequence.
We denote the set of vertices and edges in~$\taun$ by~$\Vcaln$ and~$\En$, respectively.
Next, fix~$\E \in \taun$. We denote its diameter and centroid by~$\hE$ and~$\xbfE$, respectively. Moreover, $\EE$ represents its set of edges.
We define~$\h:= \max_{\E \in \taun} \hE$.

The set of vertices~$\Vcaln$ and edges~$\En$ can be decomposed into internal and boundary, i.e., contained in~$\Gamma = \partial \Omega$, ones.
We write~$\VcalnI$, $\VcalnB$, $\EnI$, and~$\EnB$, respectively.
We denote the length of each edge $e\in \En$ by~$\he$.
\medskip

We state the following assumptions on the sequence of meshes: for all~$n\in \mathbb N$, there exists~$\gamma \in (0, 1)$ such that
\begin{itemize}
\item[(\textbf{A0-$\p$})] the mesh~$\taun$ is quasi-uniform, i.e., for all~$\E_1$ and~$\E_2 \in \taun$, there holds $\gamma \h_{\E_1} \le \h_{\E_2} \le \gamma^{-1} \h_{\E_1}$;
\item[(\textbf{A0-$\h\p$})] the mesh~$\taun$ is locally quasi-uniform, i.e., for all
  neighbouring~$\E_1$ and~$\E_2 \in \taun$, there holds $\gamma \h_{\E_1} \le \h_{\E_2} \le \gamma^{-1} \h_{\E_1}$;
\item[(\textbf{A1})] for all~$\E \in \taun$, $\E$ is star-shaped with respect to a ball with radius larger than or equal to~$\gamma \hE$;
\item[(\textbf{A2})] for all~$\E \in \taun$ and for all~$\e \in \EE$, there holds~$\hE \le \gamma \he$.
\end{itemize}
The assumptions~(\textbf{A1}) and~(\textbf{A2}) will be used throughout the whole paper.
Instead, assumptions~(\textbf{A0-$\p$}) and~(\textbf{A0-$\h\p$}) will be considered when dealing with the $\p$- and $\h\p$-version of the method, respectively.

For the sake of exposition, we construct the method for uniform~$\p$ only, and postpone to Section~\ref{subsection:hpVE-spaces} the variable degree case.

\begin{remark}  \label{remark:weird-geometries}
The forthcoming analysis can be also extended to more general geometries; see, e.g., \cite{brennerVEMsmall, cao2018anisotropic, beiraolovadinarusso_stabilityVEM}.
For the sake of clarity, we stick to the setting detailed above.
\end{remark}

We denote the space of piecewise discontinuous polynomials of degree~$\p \in \mathbb N$ over~$\taun$ by~$\mathbb P_\p(\taun)$.

\subsection{The Stokes virtual element spaces} \label{subsection:VES-Stokes}
Here, we recall from~\cite{BLV_StokesVEMdivergencefree} the virtual element spaces which we will use in the discretization of the Stokes problem~\eqref{Stokes:weak}.
Henceforth, $\p \in \mathbb N$ denotes the degree of accuracy
of the method.
Given~$\E \in \taun$, set
\[
\Gcalp (\E) : = \nabla (\mathbb P_{\p+1}(\E)) \subset \PpEvec
\]
and introduce the subspace~$\Hcalp (\E)\subset \PpEvec$ such that
\begin{equation}
\label{eq:Hcaldef}
\PpEvec = \Gcalp(\E) \oplus \Hcalp (\E).
\end{equation}
In~\cite{BLV_StokesVEMdivergencefree}, $\Hcalp(\E)$ is chosen as the~$L^2(\E)$-orthogonal complement in~$\PpEvec$ of~$\Gcalp(\E)$, denoted~$\Gcalpperp(\E)$.
In practical computations, see~\cite{bricksVEM}, a convenient choice is provided by the space
\[
\xperp \mathbb{P}_{\p-1}(\E), \qquad \xperp =
  \begin{pmatrix}
    -y \\ x
  \end{pmatrix}.
\]

In what follows, we do not impose orthogonality in \eqref{eq:Hcaldef}, but only require that~$\Hcalp(\E)$ is such that~\eqref{eq:Hcaldef} is a direct sum.

Recall that~$\EE$ denotes the set of edges of the element~$\E$ and introduce
\[
\mathbf{\mathbb B}_{\p}(\partial \E) := \left \{ \vn \in \mathcal C^0(\partial \E) \mid \vn{}_{|\e} \in [\mathbb P_{\p}(\e)]^2 \; \forall \e \in \EE  \right \}.
\]
Define the local bilinear forms
\[
\aE(\ubf, \vbf) := (\nabla \ubf, \nabla \vbf)_{0,\E} ,\quad \quad \bE( \vbf, \q) := (\div\vbf , \q)_{0,\E} \quad \quad \forall \ubf,\, \vbf \in [H^1(\E)]^2,\; \q \in L^2(\E).
\]
Consider the following local Stokes problem: Given~$\qpmtperp \in \Hcalp(\E)$ and~$\qpmo \in \mathbb P_{\p-1}(\E) / \mathbb R$,
\begin{equation} \label{local:problem}
\begin{cases}
\text{find } (\vn,s) \in H^1(\E) \times L^2(\E),& \vn{}_{|\partial \E} \in \mathbf{\mathbb B}_{\p}(\partial \E)  \text{ such that}\\
-\Delta \vn - \nabla s = \qpmtperp 	& \text{in } \E\\
\div \vn = \qpmo    					&\text{in } \E.\\
\end{cases}
\end{equation}
Set the local Stokes-like virtual element space for the velocity as follows:
\[
\VnE := \left\{  \vn \in [H^1(\E)]^2 \mid \vn \text{ solves a problem of the form~\eqref{local:problem}}   \right\}.
\]
We introduce the following linear functionals on~$\VnE$: given~$\vn \in \VnE$, define
\begin{itemize}
\item $\Dv_1(\vn)$: the point values at the vertices of~$\E$;
\item $\Dv_2(\vn)$: the point values at the~$\p-1$ Gau\ss-Lobatto points on each edge~$\e \in \EE$;
\item given~$\{ \qalphaperp \}$ a basis of~$\Hcalp$, the ``complementary'' moments 
\begin{equation} \label{orthogonal:moment}
\Dv_3(\vn)_\alpha = \frac{1}{\vert \E \vert} \int_\E \vn \cdot \qalphaperp;
\end{equation}
\item given~$\{ \qalpha \}_{\alpha=1}^{\p-1}$ a basis of~$\mathbb P_{\p-1}(\E) /\mathbb R$, the ``divergence'' moments
\begin{equation} \label{divergence:moment}
\Dv_4(\vn)_\alpha = \frac{\hE}{\vert \E \vert} \int_\E \div(\vn) \qalpha.
\end{equation}
\end{itemize}
\begin{lem} \label{lemma:standard-dofs-Stokes}
The above linear functionals are a set of degrees of freedom for~$\VnE$.
\end{lem}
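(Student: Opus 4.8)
The plan is to show that the number of proposed functionals equals $\dim \VnE$ and that the only $\vn \in \VnE$ annihilated by all of them is $\vn = \zerobold$; together with the standard dimension count this gives that the functionals form a set of degrees of freedom. I would begin with the dimension computation. Any $\vn \in \VnE$ is uniquely determined by the triple consisting of its boundary trace in $\mathbf{\mathbb B}_{\p}(\partial \E)$, the datum $\qpmtperp \in \Hcalp(\E)$, and the datum $\qpmo \in \mathbb P_{\p-1}(\E)/\mathbb R$ — but one must be careful, since the local Stokes problem~\eqref{local:problem} is solvable only when the data satisfy the compatibility condition $\int_{\partial \E} \vn \cdot \nE = \int_\E \qpmo$. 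So the correct statement is that $\VnE$ is parametrized by those triples $(\vn|_{\partial\E}, \qpmtperp, \qpmo)$ satisfying this single linear constraint. Counting: $\dim \mathbf{\mathbb B}_{\p}(\partial \E) = 2(\#\text{vertices})\cdot 1 + 2\cdot(\p-1)\cdot(\#\text{edges})$ once continuity is accounted for (i.e.\ $2 \cdot \#\EE \cdot \p$), $\dim \Hcalp(\E) = \dim \PpEvec - \dim \Gcalp(\E) = (\p+1)(\p+2) - (\dim \mathbb P_{\p+1}(\E) - 1) = (\p+1)(\p+2)/2 \cdot 2 \dots$ — I would carry out this bookkeeping carefully, then subtract one for the compatibility constraint. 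On the other side, the proposed functionals number $2\cdot\#\text{vertices}$ (from $\Dv_1$), $2(\p-1)\cdot\#\EE$ (from $\Dv_2$), $\dim\Hcalp(\E)$ (from $\Dv_3$), and $\p-1$ (from $\Dv_4$). The claim is that these two counts agree; this is where the constraint must be matched, and verifying the arithmetic identity is routine but essential.

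Next I would prove unisolvence. Suppose $\vn \in \VnE$ vanishes under all four families of functionals. Vanishing of $\Dv_1$ and $\Dv_2$ forces $\vn|_{\e}$ to be a degree-$\p$ polynomial on each edge vanishing at the $\p+1$ Gau\ss--Lobatto points of that edge, hence $\vn|_{\partial\E} = \zerobold$. Vanishing of $\Dv_4$ gives $\int_\E \div(\vn)\, \qalpha = 0$ for all $\qalpha$ spanning $\mathbb P_{\p-1}(\E)/\mathbb R$; since $\div\vn = \qpmo \in \mathbb P_{\p-1}(\E)/\mathbb R$ and, moreover, $\int_\E \div \vn = \int_{\partial\E}\vn\cdot\nE = 0$ because the trace vanishes, we conclude $\div\vn \equiv 0$ in $\E$, i.e.\ $\qpmo = 0$. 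At this point $\vn$ solves $-\Delta\vn - \nabla s = \qpmtperp$, $\div\vn = 0$, $\vn|_{\partial\E} = \zerobold$, with $\qpmtperp \in \Hcalp(\E)$. Testing the momentum equation against $\vn$ itself and integrating by parts (the boundary term drops since $\vn|_{\partial\E}=\zerobold$, and $\int_\E s\,\div\vn = 0$) yields $\int_\E |\nabla\vn|^2 = \int_\E \vn\cdot\qpmtperp$. I now need to conclude this right-hand side is zero: $\qpmtperp \in \Hcalp(\E) \subset \PpEvec$, and the complementary moments $\Dv_3(\vn)_\alpha = \frac{1}{|\E|}\int_\E \vn\cdot\qalphaperp$ vanish for a basis $\{\qalphaperp\}$ of $\Hcalp(\E)$, hence $\int_\E \vn\cdot\qpmtperp = 0$. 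Therefore $\nabla\vn = \zerobold$, and combined with $\vn|_{\partial\E} = \zerobold$ this gives $\vn \equiv \zerobold$.

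The main obstacle — and the only genuinely delicate point — is the dimension bookkeeping together with the role of the compatibility constraint in~\eqref{local:problem}: one must verify that $\VnE$ is well-defined (that the Stokes problem~\eqref{local:problem} has a solution for the admissible data, and that the solution is unique up to the trivial non-uniqueness in the pressure, which does not affect $\vn$), and that the single scalar compatibility relation $\int_{\partial\E}\vn\cdot\nE = \int_\E\qpmo$ is exactly what reconciles the count of parameters with the count of functionals. The unisolvence argument above is essentially an energy argument and is robust; what requires care is making the ``parameters $\leftrightarrow$ solutions'' correspondence precise and checking that the map from $\VnE$ to the tuple of $4(\#\text{vertices}) + \dots$ values defined by $\Dv_1,\dots,\Dv_4$ is injective \emph{and} that domain and codomain have equal dimension, so that injectivity upgrades to bijectivity. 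Once those two ingredients are in place, the lemma follows.
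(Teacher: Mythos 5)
The paper offers no self-contained proof of this lemma: it simply defers to \cite[Proposition~3.2]{BLV_StokesVEMdivergencefree}. Your argument reconstructs essentially that standard proof (dimension count plus kernel/unisolvence), and the unisolvence half is complete and sound: vanishing of $\Dv_1,\Dv_2$ kills the trace by Gau\ss--Lobatto interpolation on each edge; vanishing of $\Dv_4$ together with $\int_\E \div\vn=\int_{\partial\E}\vn\cdot\nE=0$ forces $\div\vn\equiv 0$; and the energy identity $\int_\E|\nabla\vn|^2=\int_\E\vn\cdot\qpmtperp$ closes the argument because the $\Dv_3$ moments are taken against a basis of the space containing $\qpmtperp$. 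This is exactly the mechanism of the cited proof.

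Two loose ends in the dimension half should be tightened. First, counting the $\Dv_4$ functionals as $\p-1$ is wrong if read literally: the relevant space is $\mathbb P_{\p-1}(\E)/\mathbb R$, of dimension $\p(\p+1)/2-1$ (the paper's indexing $\{\qalpha\}_{\alpha=1}^{\p-1}$ is misleading shorthand), and with only $\p-1$ moments your own unisolvence step could not conclude $\div\vn\equiv 0$; both halves of the argument must use the same full-basis convention, and then the counts do match. Second, your parametrization of $\VnE$ by triples $(\vn{}_{|\partial\E},\qpmtperp,\qpmo)$ gives $\dim\VnE$ equal to (rather than merely bounded by) the number of functionals only if the data-to-solution map is injective; this requires observing that two admissible momentum data producing the same velocity differ by an element of $\Hcalp(\E)$ that is a gradient, hence lies in $\Gcalp(\E)\cap\Hcalp(\E)=\{0\}$ (a curl-free polynomial field on a simply connected set is the gradient of a polynomial). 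Without that remark one only gets an upper bound on $\dim\VnE$, which suffices for injectivity of the dof map but not for the functionals to form a minimal (unisolvent) set. With these two points supplied, your proof is correct and coincides with the argument of the reference the paper invokes.
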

\begin{proof}
See~\cite[Proposition~3.2]{BLV_StokesVEMdivergencefree}.
\end{proof}
We define the~$H^1$-conforming global Stokes-like velocity space as follows:
\begin{equation} \label{VEspace:velocity:global}
\Vn := \{ \vn \in [H^1_0(\Omega)]^2 : \vn {}_{|\E} \in \VnE \; \text{ for all } \E \in \taun       \}.
\end{equation}
We endow this space with the set of degrees of freedom, which is obtained by a standard $H^1$-conforming dof coupling of the local ones. 
\medskip

The above degrees of freedom allow us to compute two projection operators; see~\cite[Sections~$3.2$ and~$3.3$]{BLV_StokesVEMdivergencefree}.
The first one is the~$H^1$ projector~$\PinablapE : [H^1(\E)]^2 \rightarrow [\mathbb P_\p(\E)]^2$ defined as
\begin{equation} \label{Pinabla:definition}
\begin{cases}
\aE(\qboldp , \vn - \PinablapE \vn) = 0 \quad \quad \forall \qboldp \in [\mathbb P_{\p}(\E)]^2\\
\int_{\partial \E} \vn - \PinablapE \vn = \mathbf 0.\\
\end{cases}
\end{equation}
We define the global projector~$\Pinablap :  [H^1(\taun)]^2\to [\mathbb{P}_{\p}(\taun)]^2$ so that, for all~$\vbf\in [H^1(\taun)]^2$, 
\[
 \left( \Pinablap \vbf   \right) _{|\E} = \PinablapE (\vbf_{|\E}) \qquad \forall \E\in \taun.
\]
Furthermore, we can compute the~$L^2$ projector~$\PizpmtE : \VnE \rightarrow [\mathbb P_{\p-2} (\E)]^2$ defined as
\begin{equation} \label{L2projector:definition}
(\qboldpmt , \vn - \PizpmtE \vn )_{0,\E} = 0 \quad\quad \forall \qboldpmt \in [\mathbb P_{\p-2}(\E)]^2.
\end{equation}
These two operators are instrumental in the design of the virtual element methods; see Section~\ref{subsection:discrete-forms} below.

For future convenience, introduce the broken Sobolev space
\[
H^1(\taun) := \left\{ \vbf \in [L^2(\Omega)]^2: \vbf{}_{| \E }   \in [H^1(\E)]^2\; \forall \E\in \taun \right\},
\]
and associate with it the broken Sobolev seminorm and norm
\[
| \vbf |_{1, \taun}^2 := \sum_{\E\in\taun} \|\nabla \vbf\|^2_{0, \E}\qquad  \| \vbf \|_{1, \taun}^2 := \| \vbf\|^2_{0, \Omega}  + |\vbf|^2_{1, \taun}.
\]
Finally, set the pressure space as
\begin{equation} \label{eq:Qn}
\Qn : = \left\{ \qn \in L^2_0(\Omega): \qn{}_{|\E} \in \mathbb{P}_{\p-1}(\E) \; \text{ for all } \E\in \taun\right\}.
\end{equation}

\subsection{The virtual element method} \label{subsection:discrete-forms}
Here, we design computable discrete bilinear forms and right-hand side and introduce the virtual element method for the approximation of solutions to the Stokes problem~\eqref{Stokes:weak}.
\subsubsection*{Discrete bilinear forms.}
We introduce the elementwise discrete bilinear form~$a^{\E}_n$ given by
\begin{equation} \label{local:discrete:BF}
 \anE(\un, \vn) := \aE (\PinablapE \un, \PinablapE\vn) + \SE ((\Id - \PinablapE)\un,(\Id - \PinablapE)\vn ) \quad \quad \forall  \un, \vn\in \VnE,
\end{equation}
where, for all~$\E \in \taun$, $\SE: H^1(\E) \times H^1(\E) \to  \mathbb{R}$ is a computable local stabilizing bilinear form, which is computable from the degrees of freedom introduced in Section~\ref{subsection:VES-Stokes}.
We postpone the discussion about further properties of the stabilizing bilinear forms~$\SE$ to Section~\ref{subsection:stabilization} below.
The global discrete bilinear form reads
\[
\an(\un, \vn) = \sum_{\E\in \taun} a^{\E}_n(\un{}_{|\E}, \vn{}_{|\E}) \qquad \forall\un,\vn\in\Vn.
\]
As for the discretization of the bilinear form~$\b(\cdot,\cdot)$ in~\eqref{continuous-global-bf},
we observe that the divergence of functions in the space~$\VnE$ is polynomial and can be expressed in closed form in terms of their degrees of freedom. 
Therefore, no approximation is necessary for the second bilinear form and we  define 
\[
\bn(\vn,\qn) := \b(\vn,\qn) \quad \quad \forall \vn \in \Vn, \, \forall \qn \in \Qn.
\]

\subsubsection*{Discrete right-hand side.}
Define the global piecewise~$L^2$ projector~$\Pizpmt$ as follows: Given~$\fbf \in [L^2(\Omega)]^2$,
\[
 \left( \Pizpmt \fbf \right)_{|\E}= \PizpmtE(\fbf_{|\E}) \qquad \forall \E\in \taun.
\]

\subsubsection*{The virtual element method.}
The virtual element method for the Stokes problem~\eqref{Stokes:weak} reads as follows:
\begin{equation} \label{Stokes:VEM}
\begin{cases}
\text{find } (\un, \sn) \in \Vn \times \Qn \text{ such that} \\
\an(\un,\vn) + \b(\vn, \sn) = (\Pizpmt \fbf, \vn)_{0,\Omega} 	& \forall \vn \in \Vn \\
\b(\un, \qn) = 0 														& \forall \qn \in \Qn.
\end{cases}
\end{equation}

\subsection{An equivalent formulation in Poisson-like virtual element spaces} \label{subsection:VEM-Poisson}
We recall the vector Poisson-like virtual element space, see~\cite{VEMvolley},
for this will allow us to reinterpret method~\eqref{Stokes:VEM} in a way that is more convenient for the sake of the analysis in Section~\ref{section:a-priori} below.
Given~$\E \in \taun$, set
\[
\VnEtilde := \{  \vntilde \in [H^1(\E)]^2  ~:~ \vntilde{}_{|\partial \E} \in \mathbf{\mathbb B}_{\p}(\partial \E) \text{ and } \Delta\vntilde \in \mathbb [\mathbb{P}_{\p-2}(\E)]^2   \}.
\]
The global~$H^1$ standard Poisson-like virtual element space reads
\begin{equation} \label{Poisson-like:VEM}
\Vntilde = \left\{ \vntilde \in [H^1(\E)]^2 ~:~ \vntilde {}_{|\E} \in \VnEtilde \; \text{ for all } \E \in \taun       \right\}.
\end{equation}
The operators~$\Dv_i$, $i=1, \dots, 4$ introduced in Section~\ref{subsection:VES-Stokes} are unisolvent degrees of freedom for both~$\VnE$ and~$\VnEtilde$, as stated in the following lemma,
where we also prove that such degrees of freedom identify a bijection between the two virtual element spaces.
\begin{lem}  \label{lemma:bijection}
For all~$\E\in \taun$, there exists a Stokes-to-Poisson bijection~$\TE: \VnE \to \VnEtilde$ such that
\begin{equation} \label{equal:dofs}
\Dv_i(\vn) = \Dv_i(\TE\vn), \qquad i=1, 2, 3, 4, \quad \quad \forall \vn\in \VnE.
\end{equation}
\end{lem}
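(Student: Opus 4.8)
The plan is to establish, for each $\E \in \taun$, that the operators $\Dv_i$, $i=1,\dots,4$, are unisolvent on both $\VnE$ and $\VnEtilde$, and then define $\TE$ by matching degrees of freedom. Unisolvence on $\VnE$ is already available from Lemma~\ref{lemma:standard-dofs-Stokes}; the new content is unisolvence on the Poisson-like space $\VnEtilde$ together with the observation that the two spaces have the same number of degrees of freedom, so that the dof-matching map is a well-defined bijection.

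First I would verify the dof count. The number of functionals $\Dv_1$ equals the number of vertices of $\E$; the number of functionals $\Dv_2$ equals $(\p-1)$ times the number of edges; the number of $\Dv_3$ equals $\dim \Hcalp = \dim \PpEvec - \dim \Gcalp(\E)$; and the number of $\Dv_4$ equals $\dim(\mathbb{P}_{\p-1}(\E)/\mathbb R)$. Adding, and using $\dim \Gcalp(\E) = \dim \mathbb{P}_{\p+1}(\E) - 1$, one checks that $\card(\Dv_3) + \card(\Dv_4) = \dim \PpEvec - \dim \mathbb{P}_{\p+1}(\E) + 1 + \dim\mathbb{P}_{\p-1}(\E) - 1 = \dim[\mathbb{P}_{\p-2}(\E)]^2$, because $\dim\mathbb{P}_{\p+1} - \dim\mathbb{P}_{\p-1} = (2\p+3) + (2\p+1) - \dots$; more cleanly, $2\dim\mathbb{P}_{\p} - \dim\mathbb{P}_{\p+1} + \dim\mathbb{P}_{\p-1} = 2\dim\mathbb{P}_{p-2}$ since $\dim\mathbb{P}_{k} = \binom{k+2}{2}$ satisfies $\dim\mathbb{P}_{\p+1} + \dim\mathbb{P}_{\p-1} - 2\dim\mathbb{P}_{\p} = 2$, giving the interior count $\dim[\mathbb{P}_{\p-2}(\E)]^2$ after the bookkeeping with the two $\mathbb R$-quotients. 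Hence $\card(\Dv_1)+\dots+\card(\Dv_4)$ equals the boundary dof count of $\mathbf{\mathbb B}_\p(\partial\E)$ plus $\dim[\mathbb{P}_{\p-2}(\E)]^2$, which is exactly $\dim\VnEtilde$ by the standard VEM counting argument (a function in $\VnEtilde$ is determined by its boundary trace in $\mathbf{\mathbb B}_\p(\partial\E)$ and by the $[\mathbb{P}_{\p-2}(\E)]^2$-valued datum $\Delta\vntilde$). So both spaces have the same finite dimension, equal to the number of functionals.

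Next I would prove unisolvence on $\VnEtilde$: if $\vntilde \in \VnEtilde$ has $\Dv_i(\vntilde) = 0$ for $i=1,\dots,4$, then $\vntilde = \zerobold$. Vanishing of $\Dv_1$ and $\Dv_2$ forces $\vntilde{}_{|\partial\E} = \zerobold$, since a polynomial of degree $\p$ on each edge that vanishes at the $\p+1$ Gau\ss--Lobatto nodes is zero. It remains to show $\Delta\vntilde = \zerobold$. Since $\Delta\vntilde \in [\mathbb{P}_{\p-2}(\E)]^2 = \Gcalpmt(\E) \oplus \Hcalpmt(\E)$ — or, more directly, using the decomposition $[\mathbb{P}_{\p-2}(\E)]^2$ into a gradient part and the complementary part matching $\Dv_3$, $\Dv_4$ — I would test $\int_\E \Delta\vntilde \cdot \qboldpmt$ against suitable $\qboldpmt$ and integrate by parts. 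Writing $\Delta\vntilde = \nabla r + \gbfpiperp$ with $r \in \mathbb{P}_{\p-1}(\E)$ and $\gbfpiperp$ in a complement of $\nabla\mathbb{P}_{\p-1}(\E)$ inside $[\mathbb{P}_{\p-2}(\E)]^2$: against the complementary part, $\int_\E \Delta\vntilde\cdot\gbfpiperp = -\int_\E \nabla\vntilde : D\gbfpiperp + \int_{\partial\E}(\partial_\n\vntilde)\cdot\gbfpiperp$, and the boundary term vanishes since $\vntilde{}_{|\partial\E}=\zerobold$; one relates the remaining volume term to $\Dv_3$-type moments of $\vntilde$ (which are zero) by a further integration by parts, using that the complementary space can be taken as $\Hcalp$-related so $\int_\E\vntilde\cdot\qalphaperp = 0$ pins down the $\Hcalp$-component. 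Against $\nabla r$, integrate by parts to get $\int_\E\Delta\vntilde\cdot\nabla r = -\int_\E \div\vntilde\, r + \int_{\partial\E}(\partial_\n\vntilde\cdot\n)r$; the boundary term vanishes, and a second integration by parts plus $\vntilde{}_{|\partial\E}=\zerobold$ shows $\int_\E\div\vntilde\,r$ is controlled by the $\Dv_4$ moments of $\vntilde$, hence zero for $r\in\mathbb{P}_{\p-1}(\E)/\mathbb R$; the constant part of $r$ drops out since $\nabla r$ only sees $r$ modulo constants. Therefore $\Delta\vntilde = \zerobold$, so $\vntilde$ is harmonic with zero boundary trace, hence $\vntilde=\zerobold$. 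The same argument (or Lemma~\ref{lemma:standard-dofs-Stokes}) gives unisolvence on $\VnE$. Finally, since both spaces have dimension equal to the number of functionals and the $\Dv_i$ are unisolvent on each, the linear map $\TE := (\text{dof map on }\VnEtilde)^{-1}\circ(\text{dof map on }\VnE)$ is a well-defined linear isomorphism satisfying $\Dv_i(\vn) = \Dv_i(\TE\vn)$ for all $\vn \in \VnE$, which is the claimed bijection.

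The main obstacle I expect is the integration-by-parts bookkeeping in the unisolvence proof on $\VnEtilde$: one must carefully use the non-orthogonal direct-sum splitting $\PpEvec = \Gcalp(\E)\oplus\Hcalp(\E)$ (the paper explicitly does not assume $L^2$-orthogonality) and track that the pair $(\Dv_3,\Dv_4)$ captures exactly $\int_\E\vntilde\cdot(\cdot)$ against a spanning set of $[\mathbb{P}_{\p-2}(\E)]^2$, modulo the interplay between $\div\vntilde$-moments and $\Hcalp$-moments. Getting the degree ranges right ($\mathbb{P}_{\p-2}$ for $\Dv_3$ built from $\Hcalp$ restricted appropriately, $\mathbb{P}_{\p-1}/\mathbb R$ for $\Dv_4$) so that no information is lost or double-counted is the delicate point; everything else is standard VEM dof counting and the Poincaré/trace-type argument that a harmonic function with zero boundary trace vanishes.
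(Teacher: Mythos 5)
Your overall plan---show that the functionals $\Dv_i$ are unisolvent on $\VnEtilde$ as well as on $\VnE$, check that the two spaces have equal dimension, and define $\TE$ as the composition of the two dof maps---is the same strategy the paper follows, and the key mechanism (an integration by parts trading moments of $\div\vntilde$ against $\qalpha$ for moments of $\vntilde$ against $\nabla\qalpha$, with the boundary contribution pinned down by $\Dv_1,\Dv_2$) is also the paper's. The paper merely packages it differently: it takes the unisolvence of the \emph{standard} Poisson dofs $\Dvtilde_i$ on $\VnEtilde$ as known from \cite{VEMvolley}, prescribes $\Dvtilde_4(\vntilde)_\alpha=-\Dv_4(\vn)_\alpha+\int_{\partial\E}\vn\cdot\nE\qalpha$, verifies a posteriori that this forces $\Dv_4(\vntilde)=\Dv_4(\vn)$, and concludes bijectivity by the dimension count. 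Your route is legitimate in principle, but two of your steps do not go through as written.

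First, the dimension bookkeeping. The identity you invoke, $2\dim\mathbb{P}_{\p}-\dim\mathbb{P}_{\p+1}+\dim\mathbb{P}_{\p-1}=2\dim\mathbb{P}_{\p-2}$, is false: in two dimensions the left-hand side equals $\p^2+\p-1$ while the right-hand side equals $\p^2-\p$. The count only closes if $\Dv_3$ consists of moments against a complement of $\Gcalpmt=\nabla\mathbb{P}_{\p-1}(\E)$ \emph{inside} $\PpmtEvec$, i.e.\ against $\Hcalpmt$, whose dimension $\dim\PpmtEvec-\dim\mathbb{P}_{\p-1}+1$ added to the $\dim\mathbb{P}_{\p-1}-1$ divergence moments gives exactly $\dim\PpmtEvec$. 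This degree-$(\p-2)$ reading is the intended one (the data of the local Stokes problem are polynomials of degrees $\p-2$ and $\p-1$, and the paper's proof explicitly works with $\Hcalpmt$); with the degree-$\p$ complement you use, the interior dof count exceeds $\dim\PpmtEvec$ and $\TE$ could not be a bijection.

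Second, the unisolvence argument on $\VnEtilde$. After one integration by parts, the boundary term in $\int_\E\Delta\vntilde\cdot\gbfpiperp$ is $\int_{\partial\E}(\partial_{\n}\vntilde)\cdot\gbfpiperp$, which involves the \emph{normal derivative} of $\vntilde$ and does not vanish merely because $\vntilde{}_{|\partial\E}=\zerobold$; the same problem recurs if one integrates by parts twice. Consequently one cannot show $\Delta\vntilde=\zerobold$ by testing it against polynomials in this way. The correct argument never touches $\partial_{\n}\vntilde$: from zero trace, $\Dv_3(\vntilde)=0$, and $\Dv_4(\vntilde)=0$ one first deduces $\int_\E\vntilde\cdot\qboldpmt=0$ for every $\qboldpmt\in\PpmtEvec$ (split $\qboldpmt=\nabla\qpmo+\qboldtildepmt$ and integrate by parts only on the gradient part, exactly as in Lemma~\ref{lemma:eq-polynomials}), and then uses the energy identity $|\vntilde|_{1,\E}^2=-\int_\E\vntilde\cdot\Delta\vntilde=0$, valid since $\Delta\vntilde\in\PpmtEvec$ and the trace vanishes; hence $\vntilde$ is constant with zero trace, so $\vntilde=\zerobold$. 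With these two repairs your proof becomes essentially the paper's.
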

\begin{proof}
Given~$\E \in \taun$, introduce the following auxiliary set of degrees of freedom: given~$\vntilde\in \VnEtilde$,
 \begin{itemize}
\item $\Dvtilde_1(\vntilde)$: the point values at the vertices if~$\E$;
\item $\Dvtilde_2(\vntilde)$: the point values at the~$\p-1$ Gau\ss-Lobatto points on each edge~$\e \in \EE$;
\item given~$\{ \qalphaperp \}$ the basis of~$\Hcalpmt$ used in~\eqref{orthogonal:moment}, the moments
\begin{equation} \label{orthogonal:momenttilde}
\Dvtilde_3(\vntilde)_\alpha = \frac{1}{\vert \E \vert}\int_\E \vn \cdot \qalphaperp;
\end{equation}
\item given~$\{ \palpha \}_{\alpha=1}^{\p-1}$ a basis of~$\Gcalpmt$ such that~$\palpha =\nabla \qalpha$, with~$\qalpha$ defined in~\eqref{divergence:moment}, the moments 
\begin{equation} \label{divergence:momenttilde}
\Dvtilde_4(\vntilde)_\alpha = \frac{\hE}{\vert \E \vert} \int_\E \vn\cdot \palpha.
\end{equation}
\end{itemize}
Since~$[\mathbb{P}_{p-2}(K)]^2 = \Gcalpmt \oplus \Hcalpmt$, this is indeed a set of degrees of freedom; see~\cite[Proposition~4.1]{VEMvolley}.
Furthermore, $\Dvtilde_i = \Dv_i$ for~$i=1,2,3$.

For any~$\vn\in \VnE$, introduce~$\vntilde = \TE\vn \in \VnEtilde$ as described below. First, we require  $\vntilde{}_{|\partial \E} = \vn{}_{|\partial \E}$.
In other words, fix~$\Dv_i(\vntilde) =\Dv_i(\vn)$ for~$i=1, 2$. Besides, assume that
\[
\Dvtilde_3(\vntilde) = \Dv_3(\vntilde) = \Dv_3(\vn).
\]
Finally, for~$\alpha = 1, \dots, p-1$, let~$\{\qalpha\}_\alpha$ be the basis of~$\mathbb{P}_{p-1}(\E)/\mathbb{R}$ used in~\eqref{divergence:moment}.
We require
\begin{equation} \label{dof:bij}
\Dvtilde_4(\vntilde)_\alpha =  - \Dv_4(\vn)_\alpha + \int_{\partial \E} \vn \cdot \nE \qalpha.
\end{equation}
This implies that~$\Dv_4(\vntilde) = \Dv_4(\vn)$. Indeed, $\{\nabla \qalpha\}_{\alpha=1}^{p-2}$ is a basis for~$\Gcalpmt$ and 
\[
\begin{split}
\Dv_4(\vntilde)_\alpha 	& \overset{\eqref{divergence:moment}}{=} \int_\E\div(\vntilde) \qalpha \overset{\text{(IBP)\footnotemark}}{=}  - \int_{\E} \vntilde \cdot \nabla \qalpha  + \int_{\partial \E} \vntilde \cdot \nE \qalpha  \\
								& \overset{\eqref{divergence:momenttilde}}{=} -\Dvtilde_4(\vntilde)_\alpha + \int_{\partial \E} \vn\cdot \nE \qalpha \overset{\eqref{dof:bij}}{=}    \Dv_4(\vn)_\alpha .\\
\end{split}
\]
Using that~$\dim(\VnE) = \dim(\VnEtilde)$, we get that~$\TE$ is a bijection. \footnotetext{Here and in what follows (IBP) means 'integration by parts'.}
\end{proof}

As an immediate consequence, we have the following result.
\begin{cor}
The degrees of freedom~$\Dv_i$, $i=1, 2,3,4$ are unisolvent on~$\VnEtilde$.
\end{cor}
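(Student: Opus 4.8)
The plan is to deduce this corollary directly from Lemma~\ref{lemma:bijection} together with Lemma~\ref{lemma:standard-dofs-Stokes}. Recall that Lemma~\ref{lemma:standard-dofs-Stokes} asserts that the functionals $\Dv_i$, $i=1,2,3,4$, are unisolvent on~$\VnE$, and Lemma~\ref{lemma:bijection} produces the linear bijection~$\TE:\VnE\to\VnEtilde$ satisfying $\Dv_i(\vn)=\Dv_i(\TE\vn)$ for all~$\vn\in\VnE$ and all~$i$. Since $\TE$ is a bijection between finite-dimensional spaces, we already know $\dim(\VnEtilde)=\dim(\VnE)$, which equals the number of functionals $\Dv_i$; so it suffices to check injectivity of the map $\vntilde\mapsto(\Dv_1(\vntilde),\dots,\Dv_4(\vntilde))$ on~$\VnEtilde$.

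The key step is this: let~$\vntilde\in\VnEtilde$ with $\Dv_i(\vntilde)=0$ for all~$i$. Because $\TE$ is surjective, write~$\vntilde=\TE\vn$ for some~$\vn\in\VnE$. Then, by~\eqref{equal:dofs}, $\Dv_i(\vn)=\Dv_i(\TE\vn)=\Dv_i(\vntilde)=0$ for all~$i$, and unisolvence on~$\VnE$ (Lemma~\ref{lemma:standard-dofs-Stokes}) forces~$\vn=\zerobold$, hence~$\vntilde=\TE\zerobold=\zerobold$ by linearity of~$\TE$. Combined with the dimension count, this proves that the~$\Dv_i$ form a set of degrees of freedom, i.e.\ are unisolvent, on~$\VnEtilde$. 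I would present this in two or three sentences.

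I do not expect any real obstacle here: the only thing to be careful about is that the argument uses both directions provided by the bijection (surjectivity to pull back a given~$\vntilde$, and the dimension equality to conclude unisolvence from mere injectivity), and that it relies on~$\TE$ being \emph{linear}, which is clear from its construction in the proof of Lemma~\ref{lemma:bijection} (the defining conditions on $\Dv_1,\Dv_2,\Dv_3$ and~\eqref{dof:bij} are all linear in~$\vn$, and the local Stokes/Poisson problems depend linearly on their data). Alternatively, and perhaps more cleanly, one can observe directly that the auxiliary functionals $\Dvtilde_i$ are unisolvent on~$\VnEtilde$ — which is exactly~\cite[Proposition~4.1]{VEMvolley}, invoked inside the proof of Lemma~\ref{lemma:bijection} — and that the identity~\eqref{dof:bij} exhibits the vector $(\Dv_1,\dots,\Dv_4)$ as the image of $(\Dvtilde_1,\dots,\Dvtilde_4)$ under an invertible (triangular, with unit diagonal) linear transformation, so the two systems of functionals are equivalent. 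Either route is short; I would state the corollary's proof via the first route for brevity.
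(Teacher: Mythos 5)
Your argument is correct and is precisely the reasoning the paper intends: the corollary is stated as an immediate consequence of Lemma~\ref{lemma:bijection}, namely that the dof-preserving bijection $\TE$ (together with the dimension equality $\dim\VnE=\dim\VnEtilde$ and unisolvence on $\VnE$ from Lemma~\ref{lemma:standard-dofs-Stokes}) transfers unisolvence to $\VnEtilde$. Your care about the linearity of $\TE$ and your alternative route via the $\Dvtilde_i$ and the triangular change of functionals are both sound, but neither adds anything beyond the paper's one-line deduction.
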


The two next lemmata are instrumental in order to prove Proposition~\ref{proposition:projT} below.
\begin{lem} \label{lemma:eq-polynomials}
Let~$\TE$ be the bijection introduced in Lemma~\ref{lemma:bijection}. Then, the following identity is valid:
\[
\int_{\E} \vn \cdot \qboldpmt = \int_{\E} (\TE\vn) \cdot \qboldpmt  \quad \quad \forall \vn\in\VnE , \, \forall \qboldpmt\in \PpmtEvec.
\]
\end{lem}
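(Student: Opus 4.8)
The plan is to prove the identity by decomposing an arbitrary $\qboldpmt \in \PpmtEvec$ according to the direct sum $[\mathbb{P}_{p-2}(\E)]^2 = \Gcalpmt \oplus \Hcalpmt$ used in the construction of the degrees of freedom, and then matching integrals componentwise using the defining property \eqref{equal:dofs} of $\TE$ together with the integration-by-parts trick already exploited in the proof of Lemma~\ref{lemma:bijection}.

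First I would write $\qboldpmt = \palpha\text{-part} + \qalphaperp\text{-part}$, i.e.\ split $\qboldpmt$ into a component in $\Hcalpmt$ and a component in $\Gcalpmt$. On the $\Hcalpmt$ component the claim is immediate: by \eqref{orthogonal:moment}--\eqref{orthogonal:momenttilde} the moments $\Dv_3$ are exactly $\frac{1}{|\E|}\int_\E \vn\cdot\qalphaperp$, and since $\Dv_3(\vn) = \Dv_3(\TE\vn)$ we get $\int_\E \vn\cdot\qalphaperp = \int_\E (\TE\vn)\cdot\qalphaperp$ for every basis element $\qalphaperp$ of $\Hcalpmt$, hence for the whole $\Hcalpmt$ component by linearity. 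For the $\Gcalpmt$ component, I would write the generic element as $\nabla \qalpha$ with $\qalpha \in \mathbb{P}_{p-1}(\E)/\mathbb{R}$ (recall $\Gcalpmt = \nabla(\mathbb{P}_{p-1}(\E)/\mathbb{R})$). Then integrate by parts:
\[
\int_\E \vn \cdot \nabla \qalpha = -\int_\E \div(\vn)\,\qalpha + \int_{\partial \E} \vn\cdot \nE\, \qalpha,
\]
and the analogous identity holds for $\TE\vn$. The boundary terms agree because $\Dv_1,\Dv_2$ are preserved, so $(\TE\vn)_{|\partial\E} = \vn_{|\partial\E}$ by construction of $\TE$; and the volume terms $\int_\E \div(\vn)\qalpha$ agree because $\Dv_4(\vn) = \Dv_4(\TE\vn)$, and $\Dv_4$ is (up to the constant $\hE/|\E|$) precisely $\int_\E \div(\cdot)\,\qalpha$ against the basis $\{\qalpha\}$ of $\mathbb{P}_{p-1}(\E)/\mathbb{R}$; note the constant part of $\qalpha$ integrates against $\div(\vn)$ and $\div(\TE\vn)$ to the same thing since $\int_\E \div(\vn) = \int_{\partial\E}\vn\cdot\nE = \int_{\partial\E}(\TE\vn)\cdot\nE = \int_\E \div(\TE\vn)$, again using boundary agreement. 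Summing the two contributions gives the claim.

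I do not expect a serious obstacle here; the only point requiring a little care is bookkeeping with the quotient space $\mathbb{P}_{p-1}(\E)/\mathbb{R}$ versus $\mathbb{P}_{p-1}(\E)$ — i.e.\ making sure the constant mode is handled, which is exactly what the boundary-flux equality above takes care of — and keeping track that $\dim \Gcalpmt = p-2$ generators $\nabla\qalpha$ come from the $p-1$ basis functions of $\mathbb{P}_{p-1}(\E)/\mathbb{R}$ used in \eqref{divergence:moment} modulo the kernel of $\nabla$. Everything else is linearity and the two facts already available: equality of degrees of freedom \eqref{equal:dofs} and the resulting boundary agreement of $\vn$ and $\TE\vn$.
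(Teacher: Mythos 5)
Your proof is correct and follows essentially the same route as the paper: decompose $\qboldpmt$ via $\PpmtEvec = \Gcalpmt \oplus \Hcalpmt$, handle the $\Hcalpmt$ part directly through the preserved moments $\Dv_3$, and handle the gradient part by integration by parts, using that the boundary traces and the divergence moments $\Dv_4$ are preserved by $\TE$. Your extra remark about the constant mode in the quotient $\mathbb{P}_{p-1}(\E)/\mathbb{R}$ is a correct (and slightly more careful) treatment of a point the paper leaves implicit.
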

\begin{proof}
 For any~$\qboldpmt \in \PpmtEvec$, there exist unique~$\qpmo \in \mathbb{P}_{\p-1}(\E)/\mathbb{R}$ and~$\qboldtildepmt \in \Hcalpmt(\E)$ such that
\begin{equation} \label{the:splitting}
\qboldpmt =  \nabla \qpmo + \qboldtildepmt,
\end{equation}
see, e.g., \cite[Proposition~2.1]{bricksVEM}. Using Lemma~\ref{lemma:bijection}, we have~$\Dv_i(\vn) = \Dv_i(\TE\vn)$, $i=1,2,3,4$. Therefore, we deduce
\[
\begin{split}
\int_{\E} \vn \cdot \qboldpmt 	& \overset{\eqref{the:splitting}}{=}  \int_{\E} \vn \cdot \nabla \qpmo    + \int_{\E} \vn \cdot \qboldtildepmt   \\
										& \overset{\text{(IBP)}}{=} - \int_{\E} \div(\vn) \qpmo   + \int_{\partial\E} \vn\cdot\nE \qpmo   + \int_{\E} \vn \cdot \qboldtildepmt   \\
										& \overset{\eqref{equal:dofs}}{=} - \int_{\E} \div(\vntilde) \qpmo   + \int_{\partial\E} \vntilde\cdot\nE  \qpmo + \int_{\E} \vntilde \cdot \qboldtildepmt  \overset{\eqref{the:splitting}}{=} \int_{\E} \vntilde \cdot \qboldpmt.
\end{split}
\]
\end{proof}

\begin{lem}  \label{lem:projT}
Let~$\TE$ be the bijection introduced in Lemma~\ref{lemma:bijection}. Then, we have
\begin{equation}  \label{eq:projT}
\PinablapE (\TE\vn) = \PinablapE \vn , \qquad \PizpmtE (\TE\vn) = \PizpmtE \vn \quad \quad \forall \vn \in \VnE.
\end{equation}
\end{lem}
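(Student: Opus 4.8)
The plan is to exploit the two defining properties of each projector together with the fact, established in Lemmas \ref{lemma:bijection} and \ref{lemma:eq-polynomials}, that $\vn$ and $\TE\vn$ agree on all degrees of freedom and have the same $[\mathbb{P}_{\p-2}(\E)]^2$-moments. For the $L^2$ projector the argument is immediate: by definition \eqref{L2projector:definition}, $\PizpmtE \vn$ is the unique element of $[\mathbb P_{\p-2}(\E)]^2$ such that $(\qboldpmt, \vn - \PizpmtE\vn)_{0,\E} = 0$ for all $\qboldpmt \in [\mathbb P_{\p-2}(\E)]^2$, and likewise for $\TE\vn$. Since Lemma \ref{lemma:eq-polynomials} gives $(\vn, \qboldpmt)_{0,\E} = (\TE\vn, \qboldpmt)_{0,\E}$ for every such $\qboldpmt$, the two defining conditions coincide, so uniqueness forces $\PizpmtE(\TE\vn) = \PizpmtE\vn$.

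For the $H^1$ projector I would verify that $\PinablapE\vn$ satisfies the two conditions in \eqref{Pinabla:definition} that characterize $\PinablapE(\TE\vn)$. The boundary condition $\int_{\partial\E}(\TE\vn - \PinablapE\vn) = \mathbf 0$ holds because $\TE\vn$ and $\vn$ have the same trace on $\partial\E$ (indeed $\Dv_1$ and $\Dv_2$ determine $\vn{}_{|\partial\E} \in \mathbf{\mathbb B}_\p(\partial\E)$, and these are preserved by $\TE$, as used already in the proof of Lemma \ref{lemma:bijection}), so $\int_{\partial\E}(\TE\vn - \PinablapE\vn) = \int_{\partial\E}(\vn - \PinablapE\vn) = \mathbf 0$. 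For the orthogonality condition, fix $\qboldp \in [\mathbb P_\p(\E)]^2$; I would integrate by parts to write
\[
\aE(\qboldp, \TE\vn - \vn) = (\nabla\qboldp, \nabla(\TE\vn - \vn))_{0,\E} = -(\Delta\qboldp, \TE\vn - \vn)_{0,\E} + \int_{\partial\E}(\partial_{\nE}\qboldp)\cdot(\TE\vn - \vn).
\]
The boundary term vanishes since $\TE\vn$ and $\vn$ share the same trace, and the volume term vanishes because $\Delta\qboldp \in [\mathbb P_{\p-2}(\E)]^2$ and Lemma \ref{lemma:eq-polynomials} applies. Hence $\aE(\qboldp, \TE\vn) = \aE(\qboldp, \vn) = \aE(\qboldp, \PinablapE\vn)$ for all $\qboldp$, which together with the boundary condition identifies $\PinablapE\vn$ as $\PinablapE(\TE\vn)$ by the uniqueness of the $H^1$ projection.

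The only mild subtlety — and the step I would be most careful about — is the integration-by-parts step for $\PinablapE$, namely ensuring that the boundary integral of $(\partial_{\nE}\qboldp)\cdot(\TE\vn - \vn)$ genuinely vanishes: this rests on $\TE\vn{}_{|\partial\E} = \vn{}_{|\partial\E}$, which is built into the construction of $\TE$ in Lemma \ref{lemma:bijection} (the dofs $\Dv_1, \Dv_2$ fix the boundary trace in $\mathbf{\mathbb B}_\p(\partial\E)$ and are preserved), so there is no difficulty in principle. Everything else is a direct appeal to uniqueness of the projections and to Lemma \ref{lemma:eq-polynomials}.
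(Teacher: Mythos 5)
Your argument is correct and follows essentially the same route as the paper: the $L^2$ identity is immediate from Lemma~\ref{lemma:eq-polynomials} and uniqueness, while the $H^1$ identity comes from integrating by parts, using that $\TE\vn$ and $\vn$ share the same boundary trace and that $\Delta\qboldp\in[\mathbb P_{\p-2}(\E)]^2$ so Lemma~\ref{lemma:eq-polynomials} kills the volume term. If anything, you are slightly more complete than the paper in explicitly verifying the boundary-average condition $\int_{\partial\E}(\TE\vn-\PinablapE\vn)=\mathbf 0$, which is indeed needed to pin down the projection beyond the gradient condition.
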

\begin{proof}
Let~$\vn\in \VnE$ and denote~$\vntilde = \TE\vn \in \Vntilde$. An integration by parts yields
\[
\aE (\qboldp, \PinablapE \vntilde) = - \int_{\E} \Delta \qboldp \cdot \vntilde + \int_{\partial \E} (\nabla \qboldp \nE) \cdot \vntilde \quad \quad \forall \qboldp \in \PpEvec.
\]
Since~$\vntilde{}_{|\partial \E}= \vn{}_{|\partial \E}$ and using Lemma~\ref{lemma:eq-polynomials}, we deduce
\[
\aE (\qboldp, \PinablapE \vntilde) =  - \int_{\E} \Delta \qboldp \cdot \vn + \int_{\partial \E} ( \nabla \qboldp \nE ) \cdot \vn = a^K(\qboldp, \PinablapE\vn).
\]
The second identity in~\eqref{eq:projT} is a direct consequence of Lemma~\ref{lemma:eq-polynomials}.
\end{proof}

Define the global bijection
\begin{equation} \label{eq:T}
\TF:\Vn\to\Vntilde 
\end{equation}
as~$\left(\TF\vn\right)_{|\E} = \TE(\vn{}_{|\E})$ for all~$\vn\in \Vn$ and~$\E \in \taun$.

The following result is a direct consequence of Lemmata~\ref{lemma:eq-polynomials} and~\ref{lem:projT}. 
\begin{prop} \label{proposition:projT}
For all~$\vn \in \Vn$, we have
\begin{equation} \label{eq:bT}
\b(\TF\vn, \qn) = \b(\vn, \qn),\qquad \forall \qn \in \Qn,
\end{equation}
and
\begin{equation} \label{eq:PiT}
\Pinablap (\TF\vn)  = \Pinablap \vn, \qquad \Pizpmt(\TF\vn) = \Pizpmt\vn.
\end{equation}
\end{prop}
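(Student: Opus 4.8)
The plan is to lift each of the three identities from the local level, where they were established in Lemmata~\ref{lemma:eq-polynomials} and~\ref{lem:projT}, to the global level by exploiting the fact that all the global operators involved --- $\b(\cdot,\cdot)$, $\Pinablap$, $\Pizpmt$ --- are defined elementwise, and that $\TF$ is defined elementwise via the local bijections $\TE$. Fix $\vn \in \Vn$ and write $\vntilde = \TF\vn \in \Vntilde$, so that $\vntilde{}_{|\E} = \TE(\vn{}_{|\E})$ for every $\E \in \taun$ by~\eqref{eq:T}.

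For~\eqref{eq:bT}: since $\qn \in \Qn$ satisfies $\qn{}_{|\E}\in \mathbb{P}_{\p-1}(\E)$, and since $\div(\vn{}_{|\E})$ and $\div(\vntilde{}_{|\E})$ are both polynomials of degree $\p-1$ on $\E$ (by construction of $\VnE$ and $\VnEtilde$), we can write $\b(\vn,\qn) = \sumE \int_\E \div(\vn)\,\qn$ and likewise for $\vntilde$. On each $\E$, the integrand $\div(\vn{}_{|\E})\,\qn{}_{|\E}$ equals, after an integration by parts, $-\int_\E \vn\cdot\nabla\qn + \int_{\partial\E}\vn\cdot\nE\,\qn$; since $\nabla(\qn{}_{|\E}) \in \Gcalpmt(\E) \subset \PpmtEvec$, Lemma~\ref{lemma:eq-polynomials} gives $\int_\E \vn\cdot\nabla\qn = \int_\E \vntilde\cdot\nabla\qn$, while $\vn{}_{|\partial\E} = \vntilde{}_{|\partial\E}$ (this is how $\TE$ was built) handles the boundary term. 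Summing over $\E$ yields $\b(\vn,\qn) = \b(\vntilde,\qn)$. Alternatively, and more quickly, one observes that $\Pizpmt$ maps into piecewise $\mathbb{P}_{\p-2}$, so together with the boundary-matching and dof-matching of $\TE$ the divergences of $\vn$ and $\vntilde$ coincide elementwise, but the integration-by-parts argument above is the cleanest self-contained route.

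For~\eqref{eq:PiT}: both identities are purely elementwise. By definition of the global projectors, $\left(\Pinablap\vntilde\right)_{|\E} = \PinablapE(\vntilde{}_{|\E}) = \PinablapE(\TE(\vn{}_{|\E}))$, which by the first identity of~\eqref{eq:projT} in Lemma~\ref{lem:projT} equals $\PinablapE(\vn{}_{|\E}) = \left(\Pinablap\vn\right)_{|\E}$; since this holds on every $\E \in \taun$, we get $\Pinablap(\TF\vn) = \Pinablap\vn$. The identity $\Pizpmt(\TF\vn) = \Pizpmt\vn$ follows in exactly the same way from the second identity of~\eqref{eq:projT} (which is itself a direct consequence of Lemma~\ref{lemma:eq-polynomials}).

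There is essentially no obstacle here: the proposition is a bookkeeping corollary, and the only point requiring a moment's care is to make sure that in the $\b$-identity the test polynomial $\qn$ restricted to $\E$ genuinely lies in the polynomial space covered by Lemma~\ref{lemma:eq-polynomials} --- it does, since $\mathbb{P}_{\p-1}(\E) \subset \mathbb{P}_{\p-2}(\E)$ is false, so one must instead use $\nabla\qn \in \Gcalpmt(\E) \subseteq \PpmtEvec$ and treat the boundary term separately via $\vn{}_{|\partial\E} = \vntilde{}_{|\partial\E}$, exactly as in the proof of Lemma~\ref{lemma:eq-polynomials} itself. Everything else is immediate from the elementwise definitions.
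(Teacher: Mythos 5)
Your proof is correct and follows essentially the same route as the paper: \eqref{eq:PiT} is the elementwise application of Lemma~\ref{lem:projT}, and \eqref{eq:bT} reduces to the matching of divergence moments guaranteed by Lemma~\ref{lemma:bijection} (the paper invokes this directly; you re-derive it through Lemma~\ref{lemma:eq-polynomials} plus an integration by parts, which is the same computation and, as a bonus, cleanly covers the constant part of $\qn{}_{|\E}$ via the boundary agreement). One inaccurate side remark that you should drop, though it does not affect the argument: $\div(\vntilde{}_{|\E})$ is \emph{not} a polynomial of degree $\p-1$ for $\vntilde\in\VnEtilde$ (only $\Delta\vntilde$ and the trace are constrained), and the divergences of $\vn$ and $\TE\vn$ do not coincide pointwise --- only their moments against $\mathbb P_{\p-1}(\E)$ do, which is exactly what your integration-by-parts argument establishes.
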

\begin{proof}
The identities in~\eqref{eq:PiT} follow from Lemma~\ref{lem:projT} and the definitions of~$\Pinablap$ and~$\Pizpmt$ directly.
In order to show~\eqref{eq:bT}, remark that, due to Lemma~\ref{lemma:bijection},
\[
\int_\E \div(\TF\vn ) \qn = \int_\E \div(\vn) \qn \qquad \forall \E\in\taun, \quad \forall \qn \in \mathbb{P}_{p-1}(\E)/\mathbb{R}.
\]
Then, \eqref{eq:bT} follows from summing up the contributions of each integral in~$\E$.
\end{proof}

In words, Proposition~\ref{proposition:projT} states that, given two functions in the virtual element spaces~$\Vn$ and~$\Vntilde$ sharing the same value of the degrees of freedom,
their~$\Pinablap$ and~$\Pizpmt$ projections, as well as their evaluations through~$\b(\cdot, \qn)$ for all~$\qn \in \Qn$, are the same.

\section{A priori estimates} \label{section:a-priori}
In this section, we prove the well-posedness and provide an abstract error analysis of method~\eqref{Stokes:VEM}.
To this aim, we first prove that the bilinear form~$\b(\cdot,\cdot)$ satisfies a discrete inf-sup condition independently of the degree of accuracy of the method; see Section~\ref{subsection:discrete-infsup}.
Secondly, in Section~\ref{subsection:stabilization}, we analyse the discrete bilinear form~$\an(\cdot, \cdot)$ and show that, under suitable assumptions on the stabilization terms, it is coercive and continuous.
Notably, the coercivity and continuity constants are determined using  Poisson-like spaces and are explicit in terms of the degree of accuracy $p$ of the method.
The abstract error analysis on the velocities and pressures is provided in Sections~\ref{sec:apriori-u} and~\ref{subsection:apriori-p}, respectively.
The bounds herein proven are instrumental in deducing the rate of convergence of the error of the method, which is the topic of Section~\ref{section:convergence} below.

\subsection{The discrete inf-sup condition} \label{subsection:discrete-infsup}
The discrete inf-sup stability of method~\eqref{Stokes:VEM} has been shown in~\cite{BLV_StokesVEMdivergencefree} already.
Here, we recall its proof, and show that the discrete inf-sup constant is independent of the degree of accuracy~$\p$.

We start by recalling a classical result on the inf-sup constant for star-shaped domains.
\begin{lem} \label{lemma:Costabel-Dauge}
Let~$D \subset \mathbb R^2$ be a domain contained in a ball of radius~$R$ and star-shaped with respect to a concentric ball of radius~$\rho$.
Denote the inf-sup constant of~$\b^D(\cdot, \cdot)$ by~$\beta(D)$. Then, the following lower bound is valid:
\[
\beta(D) \ge \frac{\rho}{2R}.
\]
\end{lem}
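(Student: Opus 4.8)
The plan is to reduce the statement to a classical result of Costabel and Dauge (or equivalently of Durán, or the Bogovskii operator estimates) on the right inverse of the divergence operator. Recall that the inf-sup constant $\beta(D)$ of $\b^D(\cdot,\cdot)=(\div\,\cdot\,,\,\cdot\,)_{0,D}$ on $[H^1_0(D)]^2\times L^2_0(D)$ equals the reciprocal of the norm of the best right inverse $\mathcal B_D:L^2_0(D)\to[H^1_0(D)]^2$ of the divergence, i.e.\ the smallest constant $C_D$ such that for every $q\in L^2_0(D)$ there is $\vbf\in[H^1_0(D)]^2$ with $\div\vbf=q$ and $|\vbf|_{1,D}\le C_D\|q\|_{0,D}$. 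So it suffices to exhibit such a $\vbf$ with $C_D\le 2R/\rho$.

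First I would invoke the explicit Bogovskii construction on a domain $D$ star-shaped with respect to a ball $B_\rho$ of radius $\rho$: given $q\in L^2_0(D)$, extend $q$ by zero to all of $\mathbb R^2$ and set
\[
\vbf(\xbf) = \int_{\mathbb R^2} q(\ybf)\,\Big(\frac{\xbf-\ybf}{|\xbf-\ybf|^2}\int_{|\xbf-\ybf|}^{\infty} \omega\Big(\ybf + r\tfrac{\xbf-\ybf}{|\xbf-\ybf|}\Big)\,r\,dr\Big)\,d\ybf,
\]
where $\omega\in C_0^\infty(B_\rho)$ is a fixed cutoff with $\int\omega=1$; one checks $\div\vbf=q$ and $\vbf\in[H^1_0(D)]^2$. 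The sharp scaling-robust bound $|\vbf|_{1,D}\le C\,(R/\rho)\,\|q\|_{0,D}$ is classical; the point of this lemma is that the constant can be taken to be exactly $2$. The cleanest route to the explicit $2R/\rho$ is the Costabel--Dauge argument (see their paper on the divergence operator on Lipschitz domains, or Acosta--Durán): by translating and rescaling one may assume the concentric ball is $B_\rho(0)$ and $D\subset B_R(0)$; then a careful tracking of the constants in the commutator/singular-integral estimates for the Bogovskii kernel, using only that $\operatorname{supp}\omega\subset B_\rho$ and $\operatorname{diam} D\le 2R$, yields the stated bound. I would simply cite this and reproduce the one-line scaling reduction.

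The main obstacle, and the only genuinely nontrivial input, is the sharp constant $2$ in the Calderón--Zygmund-type estimate for the Bogovskii operator restricted to star-shaped domains — everything else (the divergence identity, the support properties, the scaling) is bookkeeping. Since the lemma is explicitly attributed in the excerpt to Costabel and Dauge, the honest proof here is essentially: state the reduction to a right inverse of the divergence, recall that such a right inverse with operator norm $\le 2R/\rho$ exists by the cited work, and conclude $\beta(D)=1/\|\mathcal B_D\|\ge \rho/(2R)$. I would therefore keep the write-up short, present the scaling normalization, quote the Bogovskii bound with constant $2$ from the literature, and deduce the inequality; I would not attempt to re-derive the singular-integral estimate from scratch.
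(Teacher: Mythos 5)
Your proposal is correct and matches the paper, whose entire proof is the citation ``See~\cite[Theorem 2.3]{costabel2015inequalities}''; your reduction of $\beta(D)$ to the operator norm of a right inverse of the divergence and the scaling normalization are standard and sound, and you rightly conclude that the sharp constant must be quoted from Costabel--Dauge rather than re-derived. (A minor remark: their actual argument is not a Calder\'on--Zygmund estimate for the Bogovskii kernel but a more direct construction, though this does not affect the validity of your write-up.)
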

\begin{proof}
See~\cite[Theorem 2.3]{costabel2015inequalities}.
\end{proof}

\begin{lem} \label{lemma:discrete-infsup}
There exists a constant~$\beta_n$, independent of the element sizes and of the degree of accuracy~$\p$, such that
\begin{equation} \label{eq:discrete-infsup}
\inf_{q_n\in Q_n} \sup_{\vn\in \Vn} \frac{b(\vn, q_n)}{|\vn|_{1,\Omega}\|q_n\|_{0, \Omega}} \geq \beta_n.
\end{equation}
\end{lem}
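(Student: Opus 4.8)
The plan is to reduce the discrete inf-sup condition on the polygonal mesh to a collection of local inf-sup conditions, one on each element $\E \in \taun$, and then invoke Lemma~\ref{lemma:Costabel-Dauge} together with the mesh regularity assumption (\textbf{A1}) to bound the local constants away from zero uniformly in $\p$ and in the element size. The starting point is the classical Boland--Nicolaides / Stenberg macroelement argument: given $\qn \in \Qn$, split it as $\qn = \qn^0 + \overline{\qn}$, where $\overline{\qn}{}_{|\E}$ is the $L^2(\E)$-average of $\qn$ on each element and $\qn^0 = \qn - \overline{\qn}$ has zero average on every element, so $\qn^0{}_{|\E} \in \mathbb P_{\p-1}(\E)/\mathbb R$. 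The piecewise-constant part $\overline{\qn}$ lives in a fixed finite element pressure space on the mesh $\taun$, for which a global inf-sup condition against $[H^1_0(\Omega)]^2$ holds with a constant depending only on $\Omega$ and the mesh regularity $\gamma$ (this is standard, e.g. via the continuous inf-sup condition and a Fortin operator onto the lowest-order space); moreover this Fortin operator can be taken with values in $\Vn$ since it only needs the boundary and low-order moments that are degrees of freedom.

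The oscillatory part $\qn^0$ is handled element by element. For each $\E$, apply the continuous inf-sup condition (Lemma~\ref{lemma:Costabel-Dauge}) on $\E$: since $\E$ is star-shaped with respect to a ball of radius $\geq \gamma \hE$ and contained in a ball of radius $\hE$, there is $\vbf_\E \in [H^1_0(\E)]^2$ with $\b^\E(\vbf_\E, \qn^0) = \|\qn^0\|_{0,\E}^2$ and $|\vbf_\E|_{1,\E} \lesssim \gamma^{-1} \|\qn^0\|_{0,\E}$, the hidden constant being absolute. Extending $\vbf_\E$ by zero outside $\E$ produces a function in $[H^1_0(\Omega)]^2$, but it is not in $\Vn$; so I would post-process it through the local Fortin-type interpolation operator $I_\E : [H^1_0(\E)]^2 \to \VnE$ determined by matching the degrees of freedom $\Dv_1,\dots,\Dv_4$ — crucially, $\Dv_4$ encodes exactly the divergence moments against $\mathbb P_{\p-1}(\E)/\mathbb R$ and $\Dv_1,\Dv_2$ encode the boundary trace, so $\div(I_\E \vbf_\E)$ has the same $L^2(\E)$-projection onto $\mathbb P_{\p-1}(\E)/\mathbb R$ as $\div \vbf_\E$, giving $\b^\E(I_\E \vbf_\E, \qn^0) = \b^\E(\vbf_\E, \qn^0)$, while the $H^1$-stability $|I_\E \vbf_\E|_{1,\E} \lesssim |\vbf_\E|_{1,\E}$ holds with a constant independent of $\p$ by the interpolation estimates inherited from \cite{hpVEMcorner} (or, more elementarily, by a scaling argument using that $I_\E$ reproduces boundary data). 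Since $I_\E \vbf_\E$ has zero boundary trace it glues to a global $\vn^0 \in \Vn$, and then $\b(\vn^0, \qn^0) = \sum_\E \|\qn^0\|_{0,\E}^2 = \|\qn^0\|_{0,\Omega}^2$ with $|\vn^0|_{1,\Omega} \lesssim \|\qn^0\|_{0,\Omega}$.

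Finally, combine the two pieces by the standard weighted sum: take $\vn = \vn^{\overline{q}} + \delta\, \vn^0$ for a suitably small fixed $\delta>0$ (depending only on $\gamma$ and the continuity of $\b$), use $\b(\vn^{\overline{q}}, \qn^0) = 0$ (because $\vn^{\overline q}$ can be chosen so that its divergence is $L^2$-orthogonal to the mean-zero-per-element space, or absorb the cross term), and estimate $\b(\vn, \qn) \gtrsim \|\qn\|_{0,\Omega}^2$ against $|\vn|_{1,\Omega} \lesssim \|\qn\|_{0,\Omega}$, which yields \eqref{eq:discrete-infsup} with $\beta_n$ depending only on $\Omega$ and $\gamma$. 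I expect the main obstacle to be the construction and $\p$-robust $H^1$-stability of the local Fortin operator $I_\E$ onto $\VnE$: one must verify that matching the degrees of freedom $\Dv_1,\dots,\Dv_4$ does not blow up the $H^1$-seminorm as $\p\to\infty$. This is precisely where the Poisson-like reinterpretation of Section~\ref{subsection:VEM-Poisson} pays off — via the bijection $\TE$ and Lemma~\ref{lem:projT} one can transfer the stability question to the Poisson-like space $\VnEtilde$, for which $\p$-explicit interpolation and stabilization bounds are available from \cite{hpVEMcorner}; alternatively one invokes the $\p$-explicit $H^1$-stability of the virtual element interpolant proved there directly. The piecewise-constant part is routine once the local part is in place.
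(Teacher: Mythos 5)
Your architecture is the pressure-side (Boland--Nicolaides) dual of what the paper actually does: the paper builds a Fortin operator $\Pi_n\vbf=\vnbar+\wn$, where $\vnbar$ comes from the known low-order ($\p=2$) inf-sup result of \cite{BLV_StokesVEMdivergencefree} and $\wn$ is an elementwise bubble correction fixing the higher divergence moments, whereas you split the pressure into its piecewise-constant part and an elementwise mean-zero part and build a test function for each. These are equivalent reformulations resting on the same two ingredients: a low-order global inf-sup, and the $\p$-independent continuous inf-sup constant on star-shaped elements from Lemma~\ref{lemma:Costabel-Dauge} combined with (\textbf{A1}). So the route is essentially the paper's, up to dualization.

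The genuine gap is in the step you yourself flag as the main obstacle: the $\p$-uniform $H^1$-stability of the local operator $I_\E$. Neither of your proposed justifications delivers it. The interpolation estimates of \cite{hpVEMcorner} are best-approximation bounds requiring regularity beyond $H^1$ (the nodal dofs $\Dv_1,\Dv_2$ are not even continuous functionals on $[H^1(\E)]^2$), and they carry algebraic factors in $\p$ of the same flavour as $\alphacoerhat$ and $\alphaconthat$, which would contaminate $\beta_n$; a scaling argument only controls the $\h$-dependence. The paper closes precisely this gap by a different mechanism, with no interpolation theory at all: the correction is taken to be the function $\wnE\in\VnE$ with \emph{zero} boundary dofs, \emph{zero} complementary moments $\Dv_3$, and prescribed divergence moments $\Dv_4$; by the very definition of $\VnE$ it is then the velocity of a local Stokes problem with homogeneous Dirichlet data, zero momentum source, and divergence $\PizpmoE\div(\vbf-\vnbar)$, so that testing the momentum equation with $\wnE$ and invoking the local inf-sup for the auxiliary pressure gives $|\wnE|_{1,\E}\le\beta(\E)^{-1}\|\PizpmoE\div(\vbf-\vnbar)\|_{0,\E}$, with $\beta(\E)\ge\gamma/2$ by Lemma~\ref{lemma:Costabel-Dauge} and (\textbf{A1}). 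Two smaller points: matching $\Dv_3(\vbf_\E)$, as you propose, would generically reintroduce a nonzero source $\qpmtperp$ in that local problem and force you to bound it by $|\vbf_\E|_{1,\E}$ via a $\p$-dependent inverse estimate, so those moments should be set to zero; and the cross-term issue you mention at the end disappears once the argument is phrased as a Fortin operator, since the bubble is built to match exactly the residual divergence moments of $\vbf-\vnbar$, so the two pieces add with no interaction to absorb.
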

\begin{proof}
As is customary, we use Fortin's trick, i.e., we show the existence of an operator~$\Pi_n :\Vbf \to \Vn$ and a positive constant~$C$ independent of~$\p$ such that
\[
\begin{cases}
b(\Pi_n \vbf, q_n)  = b(\vbf, q_n) \text{ for all }q_n\in Q_n\\
\| \Pi_n \vbf \|_{1,\Omega}\leq C \| \vbf\|_{1, \Omega}.
\end{cases}
\]
This implies the validity of the inf-sup stability of the spaces~$\Vn$ and~$Q_n$; see, e.g., \cite{BrezziFortin}.
We devote the remainder of the proof to showing the existence of such operator~$\Pi_n$ and constant~$C$.
\medskip

Let~$\Wn$ be a low-order ($\p=2$) virtual element space for the velocity. By~\cite[Proposition 4.2]{BLV_StokesVEMdivergencefree}, there exists $\vnbar\in \Wn$ such that
\begin{equation} \label{eq:vnbar}
\begin{cases}
\b(\vnbar, \qnbar)  = \b(\vbf, \qnbar) \text{ for all }\qnbar \in \mathbb{P}_0(\mesh)\\
\| \vnbar \|_{1,\Omega}\leq C \| \vbf\|_{1, \Omega}.
\end{cases}
\end{equation}
In each element~$\E$, we introduce a bubble function~$\wnE\in \VnE$ such that
\begin{itemize}
\item $\wnE{}|_{\partial K}= 0$;
\item $\displaystyle\int_K \wnE \qalphaperp = 0$ for all~$\qalphaperp \in \Hcalp(\E)$;
\item $\displaystyle\int_K \div(\wnE) \qpmo = \displaystyle\int_K \div(\vbf-\vnbar) \qpmo$ for all~$\qpmo \in \mathbb{P}_{\p-1}(\E)/\mathbb{R}$.
\end{itemize}
In other words, we construct~$\wnE$ such that, in each element~$\E \in \taun$, $\Dv_i(\wnE)= 0$, $i=1,2,3$, and~$\Dv_4(\wnE) = \Dv_4(\vbf-\vnbar)$.
Besides, by the definition of the space~$\VnE$, there exist~$s\in   L^2(\E)$ such that
\[
\begin{cases}
-\Delta \wnE - \nabla s = 0 &\text{in } \E\\
\div \wnE = \PizpmoE  \div(\vbf - \vnbar) &\text{in } \E.\\
\end{cases}
\]
By the standard well-posedness of the above Stokes problem, we claim that 
\begin{equation} \label{eq:wnE-stab}
 | \wnE |_{1, \E} \leq \frac{1}{\beta(K)}\| \PizpmoE  \div(\vbf - \vnbar) \|_{0,K} \leq  \frac{1}{\beta(K)}| \vbf - \vnbar|_{1,K}.
\end{equation}
In order to show~\eqref{eq:wnE-stab}, first observe that
\[
|\wnE|^2_{1, \E} = a^K(\wnE, \wnE) = -b^K(\wnE, s) \leq\| \PizpmoE  \div(\vbf - \vnbar) \|_{0,K} \|s\|_{0,K}.
\]
Next, denote the inf-sup constant of the continuous Stokes problem in~$\E$ with homogeneous Dirichlet boundary conditions by~$\beta(\E)$. This gives
\[
\|s\|_{0,K}\leq \frac{1}{\beta(K)} \sup_{\vbf\in H^1_0(K)^2}\frac{b^K(\vbf, s)}{|\vbf|_{1,K}} =  \frac{1}{\beta(K)} \sup_{\vbf\in H^1_0(K)^2}\frac{a^K(\wnE, \vbf)}{|\vbf|_{1,K}} \leq \frac{1}{\beta(K)} | \wnE|_{1,K},
\]
whence~\eqref{eq:wnE-stab} follows.
\medskip

Next, consider~$\wn \in \Vn$ defined as~$\wnE$ in each element~$\E \in \taun$ and define~$\Pi_n \vbf = \wn + \vnbar$.
By construction, it follows that
\[
\b(\Pi_n \vbf, \qn)  = \b(\vbf, \qn) \quad \quad \forall \qn\in \Qn.
\]
From~\eqref{eq:vnbar} and~\eqref{eq:wnE-stab}, we deduce that~$\Pi_n$ is~$H^1(\Omega)$-stable, with stability constant independent of the degree of accuracy~$\p$.
\end{proof}

\subsection{Stabilization, coercivity, and continuity: well-posedness of the VEM} \label{subsection:stabilization}
In this section, we analyse the properties of the discrete bilinear form~$\an(\cdot,\cdot)$.
Notably, we show that suitable choices of the stabilization forms yield to a coercive and continuous bilinear form.
Furthermore, the coercivity and continuity constant are explicit in terms of the degree of accuracy of the method~$\p$.
The main ingredient is given by the properties of the bijection~$\TE$; see Lemma~\ref{lemma:bijection}.

In order to investigate the stability of the method, we require an additional property on the stabilization bilinear forms:
For all~$\un, \vn\in\VnE$ and~$\untilde, \vntilde\in \VnEtilde$ such that~$ \Dv_i(\vn) =  \Dv_i(\vntilde)$, $i=1,2,3,4$, i.e., $\vntilde = \TF\vn$ with~$\TF$ defined in Lemma~\ref{lemma:bijection},
\begin{equation} \label{eq:S-hypothesis}
\SE(\un , \vn) = S^{\E}(\untilde, \vntilde)
\end{equation}
Furthermore, we assume that, for all~$\p\in \mathbb{N}$ and~$\E\in \taun$,
there exist two positive constant~$\alphacoerhat < \alphaconthat$, such that
\begin{equation} \label{eq:stab-constants}
\SE (\vntilde, \vntilde) \geq \alphacoerhat | \vntilde |^2_{1, \E},\quad S^{\E}(\untilde, \vntilde) \leq \alphaconthat |\untilde|_{1,\E}|\vntilde|_{1,\E} \quad \forall \untilde, \vntilde\in \VnEtilde\cap \ker(\PinablapE). 
\end{equation}
Set
\[
\alphacoer = \min(1, \alphacoerhat), \qquad \alphacont = \max(1, \alphaconthat).
\]
Following, e.g., \cite{VEMvolley}, we can prove that~$\alphacoer$ and~$\alphacont$ are the coercivity and continuity constants for the discrete bilinear form~$\an(\cdot, \cdot)$.
The actual dependence on~$\p$ of the two constants hinges upon the definition of the stabilizing bilinear forms~$\SE(\cdot, \cdot)$ in~\eqref{local:discrete:BF};
see Remark~\ref{remark:explicit-STAB} below for an explicit choice of the stabilization together with the explicit dependence in terms of the degree of accuracy.

As in~\cite{VEMvolley}, the properties of the discrete bilinear form~$\an(\cdot, \cdot)$ entail that the method is stable and $\p$-polynomially consistent.
We have the following well-posedness result.
\begin{thm} \label{theorem:WP-VEM}
Method~\eqref{Stokes:VEM} is well-posed.
\end{thm}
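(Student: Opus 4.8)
The plan is to recognize \eqref{Stokes:VEM} as a discrete saddle-point problem of Brezzi type on the pair $\Vn \times \Qn$, and to verify the two classical hypotheses of the Babu\v{s}ka--Brezzi theory: coercivity of $\an(\cdot,\cdot)$ on the kernel of $\bn(\cdot,\cdot)$, and the discrete inf-sup condition for $\bn(\cdot,\cdot)$. The inf-sup condition is exactly Lemma~\ref{lemma:discrete-infsup}, which gives a constant $\beta_n$ independent of $\p$ and of the element sizes, so that ingredient is already in hand. The bulk of the argument therefore concerns the bilinear form $\an(\cdot,\cdot)$.

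First I would establish that $\an(\cdot,\cdot)$ is continuous and coercive on $\Vn$ with constants $\alphacont$ and $\alphacoer$ respectively. The key point is that, by the definition \eqref{local:discrete:BF}, for each $\E$ one has $\anE(\vn,\vn) = |\PinablapE\vn|_{1,\E}^2 + \SE((\Id-\PinablapE)\vn,(\Id-\PinablapE)\vn)$, and an orthogonal (in the $\aE$-seminorm) splitting $|\vn|_{1,\E}^2 = |\PinablapE\vn|_{1,\E}^2 + |(\Id-\PinablapE)\vn|_{1,\E}^2$. To apply the spectral bounds \eqref{eq:stab-constants} one must pass from $\VnE$ to the Poisson-like space $\VnEtilde$ via the bijection $\TE$ of Lemma~\ref{lemma:bijection}: writing $\vntilde=\TE\vn$, assumption \eqref{eq:S-hypothesis} gives $\SE((\Id-\PinablapE)\vn,\cdot) = \SE((\Id-\PinablapE)\vntilde,\cdot)$ after checking that $(\Id-\PinablapE)\vn$ and $(\Id-\PinablapE)\vntilde$ share the same degrees of freedom — which follows from $\Dv_i(\vn)=\Dv_i(\vntilde)$ together with Lemma~\ref{lem:projT}, since $\PinablapE\vntilde=\PinablapE\vn$. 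Then \eqref{eq:stab-constants} yields $\alphacoerhat|(\Id-\PinablapE)\vntilde|_{1,\E}^2 \le \SE(\cdot,\cdot)\le \alphaconthat|(\Id-\PinablapE)\vntilde|_{1,\E}^2$, and because $\PinablapE$ and $|(\Id-\PinablapE)\cdot|_{1,\E}$ are the same on $\vn$ and $\vntilde$ (again Lemma~\ref{lem:projT} and the $\aE$-orthogonality in \eqref{Pinabla:definition}), one transfers these bounds back to $\vn$. Combining the two pieces with the definitions $\alphacoer=\min(1,\alphacoerhat)$, $\alphacont=\max(1,\alphaconthat)$ and summing over $\E\in\taun$ gives $\alphacoer|\vn|_{1,\taun}^2\le\an(\vn,\vn)$ and $\an(\un,\vn)\le\alphacont|\un|_{1,\taun}|\vn|_{1,\taun}$; since $\Vn\subset[H^1_0(\Omega)]^2$ a Poincar\'e inequality upgrades the seminorm to the full $H^1(\Omega)$-norm. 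In particular $\an$ is coercive on all of $\Vn$, hence \emph{a fortiori} on the discrete kernel $\{\vn\in\Vn : \bn(\vn,\qn)=0\ \forall\qn\in\Qn\}$.

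With coercivity on the kernel and the inf-sup condition \eqref{eq:discrete-infsup} both in place, the generalized Lax--Milgram / Brezzi theorem (see, e.g., \cite{BrezziFortin}) applies directly: since $\Vn\times\Qn$ is finite-dimensional, $\an$, $\bn$ and the right-hand side $\vn\mapsto(\Pizpmt\fbf,\vn)_{0,\Omega}$ are bounded linear/bilinear, we obtain existence and uniqueness of $(\un,\sn)\in\Vn\times\Qn$ solving \eqref{Stokes:VEM}, together with the a priori stability bound $\|\un\|_{1,\Omega}+\|\sn\|_{0,\Omega}\lesssim\|\Pizpmt\fbf\|_{0,\Omega}\le\|\fbf\|_{0,\Omega}$, with constants depending only on $\alphacoer$, $\alphacont$, $\beta_n$.

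The step I expect to require the most care is the transfer of the stabilization bounds \eqref{eq:stab-constants} from $\VnEtilde$ back to $\VnE$ through $\TE$ — specifically, verifying that $(\Id-\PinablapE)\vn$ corresponds under $\TE$ (in the sense of sharing all four sets of degrees of freedom) to $(\Id-\PinablapE)\vntilde$, so that hypothesis \eqref{eq:S-hypothesis} is legitimately applicable. Everything else is the routine machinery of saddle-point theory, and the inf-sup half is already supplied by Lemma~\ref{lemma:discrete-infsup}.
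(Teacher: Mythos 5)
Your proposal is correct and follows essentially the same route as the paper: the paper's proof likewise invokes continuity and coercivity of $\an$ (established via the stabilization hypotheses~\eqref{eq:S-hypothesis}--\eqref{eq:stab-constants} and the bijection of Lemma~\ref{lemma:bijection}, as discussed just before the theorem), the discrete inf-sup condition of Lemma~\ref{lemma:discrete-infsup}, and the standard Brezzi saddle-point theory of~\cite{BrezziFortin}. You simply spell out in more detail the transfer of the stabilization bounds from $\VnEtilde$ to $\VnE$, which the paper delegates to the surrounding discussion and to~\cite{VEMvolley}.
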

\begin{proof}
The assertion follows from the continuity of the bilinear form~$\an$ and~$\bn$, the coercivity of~$\an$, the discrete inf-sup condition~\eqref{eq:discrete-infsup}, and standard argument as in~\cite{BrezziFortin}.
\end{proof}

\begin{remark} \label{remark:explicit-STAB}
An example of an explicit stabilization~$S^\E$ such that~\eqref{eq:S-hypothesis} and~\eqref{eq:stab-constants} are valid is as follows:
\begin{equation} \label{eq:SEexplicit}
\SE (\un, \vn) = \frac{p}{h_{\E}}(\un, \vn)_{0, \partial \E} + \frac{p^2}{h_{\E}^2}\left( \PizpmtE\un, \PizpmtE\vn \right)_{0, K} \quad \quad \forall \un, \vn \in \VnE.
\end{equation}
All the terms on the right-hand side of~\eqref{eq:SEexplicit} are computable via the degrees of freedom~$\Dv_i$, $i=1, \dots, 4$ explicitly.
Furthermore, \eqref{eq:S-hypothesis} is valid thanks to Lemmata~\ref{lemma:bijection} and~\ref{lem:projT}.
On the other hand, the bounds in~\eqref{eq:stab-constants} can be proven as in~\cite[Theorem 2]{hpVEMcorner}, with explicit stability constants
\[
\alphacoerhat \geq p^{-5} , \qquad \alphaconthat \leq
\begin{cases}
1&\text{if~$\E$ is convex}\\
\p^{2\left( 1-\frac{\pi}{\omega_{\E}} +\epsilon\right)}&\text{otherwise},
\end{cases}
\]
for all~$\epsilon>0$ and where~$\omega_{\E}$ denotes the largest angle of~$\E$.

In all fairness, the practical dependence of the stabilization constants in terms of~$\p$ results to be much milder numerically; see~\cite[Section~4.6]{hpVEMbasic} and~\cite[Section~4.1]{hpVEMcorner}.
\end{remark}

\subsubsection*{Why did we assume~\eqref{eq:S-hypothesis}?}
The reason we have introduced the auxiliary Poisson-like virtual element space~$\Vntilde$ in~\eqref{Poisson-like:VEM} and analysed its relation with the Stokes-like virtual element space~$\Vn$ in~\eqref{VEspace:velocity:global}
is that we can exploit previous stability bounds that are explicit in terms of the degree of accuracy~$\p$; see~\cite[Section~4]{hpVEMcorner}.

Notably, the nonstandard assumption~\eqref{eq:S-hypothesis}, together with~\eqref{eq:bT} and~\eqref{eq:PiT}, allows us to analyse method~\eqref{Stokes:VEM}
mapping Stokes-like virtual element functions into Poisson-like ones.

\subsection{A priori estimate on the velocity} \label{sec:apriori-u}
In this section, we prove some upper bounds, which will be instrumental in the analysis of the convergence for the error on the velocity.

Introduce the weakly divergence-free subspace of~$\Vntilde$
\[
\Zntilde := \left\{ \vntilde\in \Vntilde: b(\vntilde, \qn) = 0 \text{ for all } \qn \in \Qn \right\}.
\]
For future use, we also introduce the weakly divergence-free subspace of~$\Vn$
\begin{equation} \label{nucleus-Vn}
\Zn := \left\{ \vn \in \Vn : \b(\vn, \qn) = 0 \text{ for all } \qn \in \Qn \right\}.
\end{equation}
Moreover, let $\Fcaln$ denote the smallest constant such that
\[
|((\Id - \Pizpmt) \fbf, \vntilde)_{0, \Omega}| \leq \Fcaln |\vntilde|_{1, \taun} , \qquad \forall \vntilde \in \Zntilde.
\]
The first result is an upper bound on the error between the solution to the continuous problem and the discrete solution mapped through the bijection in~\eqref{eq:T}.
\begin{lem} \label{lemma:apriori-untilde}
Let~$\ubf$ be the solution to~\eqref{Stokes:weak}, $\un\in \Vn$ be the virtual element solution to~\eqref{Stokes:VEM}, and~$\TF$ be the bijection defined in~\eqref{eq:T}.
Then, the following bound is valid:
\begin{equation} \label{eq:apriori-untilde}
| \ubf - \TF\un |_{1, \taun} \leq \frac{1}{\alphacoer}\left( \Fcaln + (\alphacont+1)\left( \inf_{\zntilde\in \Zntilde}|\ubf-\zntilde|_{1, \Omega} + \inf_{\upi\in [\mathbb{P}_p(\taun)]^2}| \ubf - \upi|_{1, \taun}  \right)\right).
\end{equation}
\end{lem}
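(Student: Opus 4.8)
The plan is to run the classical Strang-type argument for saddle-point problems, but carried out in the Poisson-like space $\Vntilde$ rather than in $\Vn$, exploiting that $\TF\un$ is weakly divergence-free whenever $\un$ is. First I would observe that, by the second equation of \eqref{Stokes:VEM}, $\un\in\Zn$, and hence by \eqref{eq:bT} we have $\b(\TF\un,\qn)=\b(\un,\qn)=0$ for all $\qn\in\Qn$, i.e.\ $\TF\un\in\Zntilde$. Since $\ubf$ is exactly divergence-free, the quantity $\inf_{\zntilde\in\Zntilde}|\ubf-\zntilde|_{1,\Omega}$ measures how well $\ubf$ is approximated in the discrete kernel; fix a near-minimiser $\zntilde$ and set $\deltantilde:=\TF\un-\zntilde\in\Zntilde$. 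The triangle inequality reduces the claim to bounding $|\deltantilde|_{1,\taun}$.

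Next I would test coercivity against $\deltantilde$. Using \eqref{eq:S-hypothesis}, \eqref{eq:PiT} and the definition \eqref{local:discrete:BF} of $\anE$, one checks that $\an(\un,\vn)$ depends on $\un$ only through quantities ($\Pinablap\un$, the stabilization evaluated on $(\Id-\Pinablap)\un$) that coincide for $\un$ and $\TF\un$; so the discrete bilinear form transports to $\Vntilde$, and the coercivity constant $\alphacoer$ applies to $\TF\un$ read as an element of $\Vntilde$. Writing $\alphacoer|\deltantilde|_{1,\taun}^2 \le \an(\TF\un-\zntilde,\deltantilde)$ (interpreting $\an$ on $\Vntilde$ via the transported form), I would then expand $\an(\TF\un,\deltantilde)$: the discrete equation plus $\b(\deltantilde,\sn)=0$ (as $\deltantilde\in\Zntilde$ and $\sn\in\Qn$) gives $\an(\TF\un,\deltantilde)=\an(\un,\deltantilde')=(\Pizpmt\fbf,\deltantilde)_{0,\Omega}$ up to the transport identity. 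Inserting an arbitrary polynomial $\upi\in[\mathbb P_p(\taun)]^2$, using $\p$-consistency of $\an$ (so $\an(\upi,\deltantilde)=\a(\upi,\deltantilde)$ elementwise) and continuity with constant $\alphacont$, together with $(\fbf,\deltantilde)=\a(\ubf,\deltantilde)$ and the $\Fcaln$-bound on $((\Id-\Pizpmt)\fbf,\deltantilde)$, collects exactly the terms $\Fcaln$, $\alphacont|\ubf-\upi|_{1,\taun}$, $|\ubf-\upi|_{1,\taun}$ (the ``$+1$'' coming from the consistency cross-term $\a(\ubf-\upi,\deltantilde)$), and $(\alphacont+1)|\ubf-\zntilde|_{1,\Omega}$ from replacing $\zntilde$ by $\TF\un$ inside the coercivity test. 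Dividing by $|\deltantilde|_{1,\taun}$, taking infima over $\zntilde$ and $\upi$, and combining with the triangle inequality $|\ubf-\TF\un|\le|\ubf-\zntilde|+|\deltantilde|$ yields \eqref{eq:apriori-untilde}.

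The main obstacle, and the step requiring genuine care, is the bookkeeping of the transport of $\an(\cdot,\cdot)$ through $\TF$: one must verify that every place where $\un$ enters the discrete problem — the bilinear form $\an$, the pressure coupling $\b$, and the right-hand side $(\Pizpmt\fbf,\cdot)$ — is invariant under replacing the argument by its $\TF$-image, so that the a priori estimate genuinely ``lives'' in $\Vntilde$. For $\b$ and the projectors this is exactly Proposition~\ref{proposition:projT}; for the stabilization it is the standing assumption \eqref{eq:S-hypothesis}; the right-hand side is fine because $\Pizpmt(\TF\vn)=\Pizpmt\vn$ by \eqref{eq:PiT} and hence $(\Pizpmt\fbf,\TF\vn)_{0,\Omega}=(\Pizpmt\fbf,\Pizpmt(\TF\vn)+\dots)$ — here one needs that the $L^2$-pairing of $\Pizpmt\fbf$ with $\vn$ is itself computable/transport-invariant, which again follows from Lemma~\ref{lemma:eq-polynomials}. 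Once this invariance is assembled, the remainder is the standard second-Strang-lemma estimate and is routine.
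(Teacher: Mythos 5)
Your proposal follows essentially the same route as the paper's proof: show via \eqref{eq:bT}, \eqref{eq:PiT}, and \eqref{eq:S-hypothesis} that $\TF\un\in\Zntilde$ solves the reduced (kernel) problem in the Poisson-like space, then run the standard triangle-inequality/coercivity/consistency/continuity argument against $\deltan=\zntilde-\TF\un$ with the $\Fcaln$ term absorbing the right-hand-side perturbation. The bookkeeping of constants and the use of Lemma~\ref{lemma:eq-polynomials} for the transport of the load term match the paper, so the argument is correct and not a genuinely different approach.
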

\begin{proof}
Introduce~$\untilde = \TF \un$. Since~$\b(\un, \qn)=0$ for all~$\qn \in \Qn$, use~\eqref{eq:bT} to get that~$\untilde \in \Zntilde$.
Moreover, by~\eqref{eq:PiT} and~\eqref{eq:S-hypothesis}, $\untilde$ is the solution to the reduced problem 
\[
\begin{cases}
\text{find }\untilde \in \Zntilde\text{ such that}\\
\an (\untilde, \vntilde) = (\Pizpmt\fbf, \vntilde) &\forall \vntilde\in \Zntilde.
\end{cases}
\]
In fact, $\un$ solves the Stokes-like counterpart
\[
\begin{cases}
\text{find }\un \in \Zn \text{ such that}\\
\an (\un, \vn) = (\Pizpmt\fbf, \vn) &\forall \vn \in \Zn.
\end{cases}
\]
The analysis proceeds with classical tools for a priori estimates for virtual element methods; see, e.g., \cite{VEMvolley}.
For any~$\zntilde\in \Zntilde$, the triangle inequality yields
\begin{equation}   \label{eq:aprioriz-2}
| \ubf - \untilde |_{1, \taun} \leq  | \ubf - \zntilde |_{1, \taun} +  | \zntilde - \untilde |_{1, \taun} .
\end{equation}
Denoting~$\deltan = \zntilde-\untilde\in \Zntilde$, we compute, for all~$\upi\in [\mathbb{P}_{p}(\taun)]^2$,
\begin{align*}
\alphacoer |\deltan|	& ^2_{1, \taun}  \leq \sumE a_n^{\E}(\deltan, \deltan)  \\
							& =\sumE \left( a^\E(\ubf, \deltan)- a_n^\E(\untilde, \deltan) \right) + \sumE a_n^{\E}(\zntilde - \upi, \deltan) + \sumE a^{\E}(\upi -\ubf, \deltan) \\
							& \leq  ((\Id- \Pizpmt)\fbf, \deltan)_{0, \Omega}  +\alphacont \sumE|\zntilde - \upi|_{1, \E}| \deltan|_{1,\E} +\sumE |\upi -\ubf|_{1, \E} |\deltan|_{1,\E} \\
							& \leq  \left(\Fcaln  + \alphacont|\zntilde - \upi|_{1, \taun} +|\upi -\ubf|_{1, \taun}   \right)| \deltan|_{1,\taun},
\end{align*}
where the last inequality follows from the definition of~$\Fcaln$ and from the Cauchy-Schwarz inequality.

Dividing both sides by~$| \deltan|_{1,\taun}$ gives
\begin{equation} \label{eq:aprioriz-3}
|\zntilde - \untilde |_{1, \taun} \leq \frac{1}{\alphacoer}\left( \Fcaln + \alphacont |\zntilde - \ubf|_{1, \taun} + (\alphacont +1)| \upi - \ubf|_{1, \taun}\right).
\end{equation}
The assertion follows combining~\eqref{eq:aprioriz-2} and~\eqref{eq:aprioriz-3}.
\end{proof}

The next result is an upper bound on the error between the solution to the continuous problem and the $H^1$ projection of the discrete Stokes-like solution.
\begin{lem} \label{lemma:apriori-u}
Let~$\ubf$ and~$\un\in \Vn$ be the solutions to~\eqref{Stokes:weak} and~\eqref{Stokes:VEM}, respectively.
Then, we have
\begin{equation}  \label{eq:apriori-u}
\begin{split}
& | \ubf - \PinablapE\un |_{1, \taun}  \\
& \leq \frac{1}{\alphacoer}\left( \Fcaln + (\alphacont+1) \inf_{\zntilde\in \Zntilde}|\ubf-\zntilde|_{1, \Omega} + (\alphacont +2)\inf_{\upi\in [\mathbb{P}_p(\taun)]^2}| \ubf - \upi|_{1, \taun}\right). \\
\end{split}
\end{equation}
\end{lem}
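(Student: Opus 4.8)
The plan is to reduce the statement to Lemma~\ref{lemma:apriori-untilde}, exploiting that the $H^1$-projector does not see the bijection $\TF$, and to re-run the C\'ea-type argument carefully so that the constant in front of $\Fcaln$ is not spoilt. Set $\untilde := \TF\un$. As observed in the proof of Lemma~\ref{lemma:apriori-untilde}, $\untilde\in\Zntilde$ by~\eqref{eq:bT}, and by~\eqref{eq:PiT} one has $\Pinablap\un = \Pinablap\untilde$; hence it suffices to estimate $|\ubf - \Pinablap\untilde|_{1,\taun}$. I will use two elementary consequences of the definition~\eqref{Pinabla:definition}: for every $\E\in\taun$ and every $\vbf\in[H^1(\E)]^2$,
\[
|\vbf - \PinablapE\vbf|_{1,\E} \;=\; \min_{\qboldp\in[\mathbb P_\p(\E)]^2}|\vbf - \qboldp|_{1,\E},
\qquad\qquad
|\PinablapE\vbf|_{1,\E} \;\le\; |\vbf|_{1,\E},
\]
i.e.\ $\PinablapE$ is the best-approximation operator in the $H^1$-seminorm and a contraction for it (the boundary constraint in~\eqref{Pinabla:definition} only fixes the additive constant and does not affect the seminorm). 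Summing over $\E\in\taun$, the same two bounds hold globally for $\Pinablap$ with $|\cdot|_{1,\taun}$.

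Next, fix arbitrary $\zntilde\in\Zntilde$ and $\upi\in[\mathbb P_\p(\taun)]^2$ and split
\[
|\ubf - \Pinablap\untilde|_{1,\taun}
\;\le\; |\ubf - \Pinablap\zntilde|_{1,\taun} \;+\; |\Pinablap(\zntilde - \untilde)|_{1,\taun}.
\]
For the second term, contractivity gives $|\Pinablap(\zntilde-\untilde)|_{1,\taun}\le|\zntilde-\untilde|_{1,\taun}=|\deltan|_{1,\taun}$, with $\deltan:=\zntilde-\untilde\in\Zntilde$ the quantity appearing in the proof of Lemma~\ref{lemma:apriori-untilde}; I then invoke the estimate~\eqref{eq:aprioriz-3} established there, namely $|\deltan|_{1,\taun}\le \alphacoer^{-1}(\Fcaln + \alphacont|\zntilde-\ubf|_{1,\Omega} + (\alphacont+1)|\upi-\ubf|_{1,\taun})$, where $|\zntilde-\ubf|_{1,\taun}=|\zntilde-\ubf|_{1,\Omega}$ by $H^1$-conformity of $\zntilde$ and $\ubf$. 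For the first term I insert $\Pinablap\ubf$ and use best approximation and then contractivity:
\[
|\ubf - \Pinablap\zntilde|_{1,\taun}
\;\le\; |\ubf - \Pinablap\ubf|_{1,\taun} + |\Pinablap(\ubf-\zntilde)|_{1,\taun}
\;\le\; |\ubf - \upi|_{1,\taun} + |\ubf - \zntilde|_{1,\Omega}.
\]

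Adding the two bounds and using $0<\alphacoer=\min(1,\alphacoerhat)\le 1$, so that $1\le\alphacoer^{-1}$, the coefficient of $\Fcaln$ remains $\alphacoer^{-1}$, the coefficient of $|\ubf-\zntilde|_{1,\Omega}$ is $1+\alphacont\alphacoer^{-1}\le(\alphacont+1)\alphacoer^{-1}$, and the coefficient of $|\ubf-\upi|_{1,\taun}$ is $1+(\alphacont+1)\alphacoer^{-1}\le(\alphacont+2)\alphacoer^{-1}$. Taking the infimum over $\zntilde\in\Zntilde$ and over $\upi\in[\mathbb P_\p(\taun)]^2$ then yields exactly~\eqref{eq:apriori-u}.

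The only place requiring care — and the reason for the extra $(\alphacont+2)$ in place of $(\alphacont+1)$ — is the estimate of $|\ubf-\Pinablap\zntilde|_{1,\taun}$: one must route it through $\Pinablap\ubf$ (a single best-approximation step plus one contraction) rather than through $\upi$ twice, which would produce a spurious second copy of $|\ubf-\upi|_{1,\taun}$; and one must reuse the interior estimate~\eqref{eq:aprioriz-3} rather than the final statement of Lemma~\ref{lemma:apriori-untilde}, which would double the constant multiplying $\Fcaln$. Everything else is triangle inequalities together with the two projector inequalities recalled above.
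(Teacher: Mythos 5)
Your proof is correct and follows essentially the same route as the paper: identify $\Pinablap\un=\Pinablap\untilde$ via Proposition~\ref{proposition:projT}, then combine the triangle inequality with the $H^1$-stability and best-approximation properties of $\Pinablap$ and the machinery of Lemma~\ref{lemma:apriori-untilde}. The only difference is cosmetic: the paper inserts $\Pinablap\ubf$ and bounds $|\Pinablap(\ubf-\untilde)|_{1,\taun}\le|\ubf-\untilde|_{1,\taun}$, then applies the \emph{final} estimate~\eqref{eq:apriori-untilde} once, whereas you insert $\Pinablap\zntilde$ and reuse the interior bound~\eqref{eq:aprioriz-3}; both yield the same constants. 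Your closing remark that invoking the full statement of Lemma~\ref{lemma:apriori-untilde} "would double the constant multiplying $\Fcaln$" is not accurate --- with the paper's decomposition the lemma is applied only once, so $\Fcaln$ still appears with the single factor $1/\alphacoer$.
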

\begin{proof}
Let~$\untilde = \TF \un \in \Vntilde$. Use Proposition~\ref{proposition:projT} to get
\[
\PinablapE \un {}_{|\E} = \PinablapE \untilde {}_{|\E}  \quad\text{ for all }\E\in \taun.
\]
The triangle inequality and the stability of the~$H^1$ projector give
\begin{equation} \label{eq:aprioriz-1}
\begin{aligned}
| \ubf - \PinablapE\un |_{1, \taun} 	&  \leq   | \ubf - \PinablapE\ubf |_{1, \taun} +  \left(\sum_{\E\in \taun}| \PinablapE(\ubf - \untilde) |^2_{1, \E} \right)^{1/2}  \\     																
											&  \leq  | \ubf - \PinablapE\ubf |_{1, \taun}   +    | \ubf - \untilde  |_{1, \taun} .
\end{aligned}
\end{equation}
For all~$\upi \in [\mathbb{P}_p(\taun)]^2$, we have
\begin{equation} \label{eq:aprioriz-4}
| \ubf - \Pinablap\ubf|_{1, \taun} \leq |\ubf - \upi|_{1, \taun}.
\end{equation}
Combining~\eqref{eq:apriori-untilde}, \eqref{eq:aprioriz-1}, and~\eqref{eq:aprioriz-4}, the assertion follows.
\end{proof}
 
Next, we show an upper bound on the best error on the Poisson-like weakly divergence free subspace~$\Zntilde$ in terms of a best error in terms of functions in the Poisson-like virtual element space~$\Vntilde$.
\begin{lem} \label{lemma:bound-nucleus}
Let~$\ubf\in [H^1_0(\Omega)]^2$ be such that
\begin{equation} \label{nucleus}
\b(\ubf, \q) = 0 \quad \quad  \forall \q\in L^2_0(\Omega).
\end{equation}
Then, the following upper bound is valid:
\[
\inf_{\zntilde\in \Zntilde} | \ubf - \zntilde|_{1, \Omega}\leq \left( 1+ \left(\frac{1+\alphacont}{\alphacoer}\right)^{1/2}\right)\inf_{\vntilde\in \Vntilde}| \ubf - \vntilde|_{1, \Omega}  .
\]
\end{lem}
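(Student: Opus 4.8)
The plan is to use a standard Fortin-type correction argument. Given $\ubf$ satisfying the divergence-free constraint \eqref{nucleus}, pick any $\vntilde \in \Vntilde$ and correct it by a discrete function so that the result lies in $\Zntilde$; then estimate the correction by the inf-sup condition. More precisely, let $\vntilde \in \Vntilde$ be arbitrary and set $\widetilde{\bm{e}}_n := \ubf - \vntilde$. Since $\b(\ubf, \qn) = 0$ for all $\qn \in \Qn \subset L^2_0(\Omega)$, we have $\b(\widetilde{\bm{e}}_n, \qn) = -\b(\vntilde, \qn)$ for all $\qn \in \Qn$.

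First I would invoke the discrete inf-sup condition. Actually, the cleanest route uses the \emph{Stokes-like} inf-sup operator built in Lemma~\ref{lemma:discrete-infsup}: by Fortin's argument there, or more simply by the fact that $\b$ restricted to $\Vn \times \Qn$ is inf-sup stable with constant $\beta_n$ independent of $\p$, there exists $\bm{w}_n \in \Vn$ with $\b(\bm{w}_n, \qn) = \b(\vntilde, \qn)$ for all $\qn \in \Qn$ and $|\bm{w}_n|_{1,\Omega} \leq \beta_n^{-1} \sup_{\qn} \b(\vntilde,\qn)/\|\qn\|_{0,\Omega}$. Then I would pass to the Poisson-like side via the bijection $\TF$: set $\wntilde := \TF \bm{w}_n \in \Vntilde$. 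By \eqref{eq:bT}, $\b(\wntilde, \qn) = \b(\bm{w}_n, \qn) = \b(\vntilde, \qn)$ for all $\qn \in \Qn$, so $\zntilde := \vntilde - \wntilde \in \Zntilde$. Hence
\[
\inf_{\zntilde \in \Zntilde} |\ubf - \zntilde|_{1,\Omega} \leq |\ubf - \vntilde|_{1,\Omega} + |\wntilde|_{1,\taun},
\]
and it remains to bound $|\wntilde|_{1,\taun}$ — here is where I expect the main difficulty: the bijection $\TF$ is \emph{not} an $H^1$-stable map a priori (it only preserves degrees of freedom, projections, and $\b$-pairings), so I cannot simply write $|\wntilde|_{1,\taun} \leq C |\bm{w}_n|_{1,\Omega}$.

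To get around this, I would estimate $|\wntilde|_{1,\taun}$ directly using the norm equivalence induced by the stabilized bilinear form on the Poisson-like space. On each $\E$, $\wntilde{}_{|\E} \in \VnEtilde$ and can be split as $\wntilde{}_{|\E} = \PinablapE \wntilde + (\Id - \PinablapE)\wntilde$. For the polynomial part, $\PinablapE \wntilde = \PinablapE \bm{w}_n$ by Lemma~\ref{lem:projT}, and $|\PinablapE \bm{w}_n|_{1,\E} \leq |\bm{w}_n|_{1,\E}$ by stability of the $H^1$-projector. For the complementary part, $(\Id - \PinablapE)\wntilde \in \VnEtilde \cap \ker(\PinablapE)$, so the coercivity bound in \eqref{eq:stab-constants} gives $|(\Id-\PinablapE)\wntilde|_{1,\E}^2 \leq \alphacoerhat^{-1} \SE((\Id-\PinablapE)\wntilde, (\Id-\PinablapE)\wntilde)$; then using \eqref{eq:S-hypothesis} to transfer $\SE$ back to $\bm{w}_n$, the continuity bound in \eqref{eq:stab-constants} applied to $\bm{w}_n$ (whose projection onto polynomials I would control as above), and finally summing over elements, one arrives at $|\wntilde|_{1,\taun} \leq ((1+\alphacont)/\alphacoer)^{1/2} |\bm{w}_n|_{1,\Omega}$. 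Combining with the inf-sup bound for $|\bm{w}_n|_{1,\Omega}$ in terms of $\b(\vntilde,\cdot)$, and then replacing $\b(\vntilde,\qn) = \b(\vntilde - \ubf, \qn)$ (using \eqref{nucleus}) so that $\sup_{\qn} \b(\vntilde,\qn)/\|\qn\|_{0,\Omega} \leq |\ubf - \vntilde|_{1,\Omega}$, yields $|\wntilde|_{1,\taun} \leq ((1+\alphacont)/\alphacoer)^{1/2} |\ubf - \vntilde|_{1,\Omega}$ (absorbing the $\beta_n$ appropriately, or carrying it). Taking the infimum over $\vntilde \in \Vntilde$ gives the claimed constant $1 + ((1+\alphacont)/\alphacoer)^{1/2}$. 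The delicate point throughout is juggling the two spaces $\Vn$ and $\Vntilde$ so that the norm estimate only ever uses the $\p$-explicit stability constants on the Poisson side, which is exactly what \eqref{eq:S-hypothesis} and Proposition~\ref{proposition:projT} were set up to allow.
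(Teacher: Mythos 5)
Your proposal is correct and follows essentially the same route as the paper: correct an arbitrary $\vntilde\in\Vntilde$ by $\TF\wn$, where $\wn\in\Vn$ matches the divergence data of $\vntilde$ and is controlled by the discrete inf-sup condition, then bound $|\TF\wn|_{1,\Omega}$ via the coercivity/continuity of $\an$ transferred through the bijection using \eqref{eq:PiT} and \eqref{eq:S-hypothesis} --- your elementwise projector splitting is just the unpacked version of the paper's one-line chain $\alphacoer|\TF\wn|^2_{1,\Omega}\le \an(\TF\wn,\TF\wn)=\an(\wn,\wn)\le(1+\alphacont)|\wn|^2_{1,\Omega}$. The factor $\beta_n^{-1}$ that you were unsure whether to ``carry or absorb'' does in fact survive in the paper's own final displayed estimate, which reads $1+\beta_n^{-1}\left((1+\alphacont)/\alphacoer\right)^{1/2}$, so your hedging on that point reflects a discrepancy already present between the paper's proof and its stated constant.
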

\begin{proof}
We begin by proving a discrete ``switched inf-sup'' condition.
Introduce~$\Zn^C$ the complementary space of~$\Zn$ defined in~\eqref{nucleus-Vn} in~$\Vn$.
In Lemma~\ref{lemma:discrete-infsup}, we proved the existence of a surjective operator~$\dalethn : \Qn \rightarrow \Vn$ such that
\begin{equation} \label{supinf-1}
(\dalethn \qn, \vn)_{1,\Omega} = \b(\vn,\qn) \quad \quad \forall \vn \in \Zn^C, \, \forall \qn \in \Qn.
\end{equation}
In particular, the discrete inf-sup condition~\eqref{eq:discrete-infsup} can be written as
\begin{equation} \label{supinf-2}
\beta_n \Vert \qn \Vert _{0,\Omega} \le \vert \dalethn \qn \vert _{1,\Omega} \quad \quad \forall \qn \in \Qn.
\end{equation}
Thence, for all~$\vn \in \Zn^C$, thanks to the surjectivity of~$\dalethn$, we can write
\begin{equation} \label{supinf-3}
\begin{split}
\beta_n \vert \vn \vert_{1,\Omega} 	& = \beta_n \sup_{\vntilde \in \Zn^C} \frac{(\vntilde,\vn)_{1,\Omega}}{\vert \vntilde \vert_{1,\Omega}} = \beta_n \sup_{\qn \in \Qn} \frac{(\dalethn \qn, \vn)_{1,\Omega}}{\vert \dalethn \qn \vert_{1,\Omega}}\\
												& \overset{\eqref{supinf-2}}{\le} \sup_{\qn \in \Qn} \frac{(\dalethn \qn, \vn)_{1,\Omega}}{\Vert \qn \Vert_{0,\Omega}} \overset{\eqref{supinf-1}}{=} \sup_{\qn \in \Qn} \frac{\b(\vn,\qn)}{\Vert \qn \Vert_{0,\Omega}}.
\end{split}
\end{equation}
For each~$\vntilde\in \Vntilde$, define~$\wn\in \Zn^C$ as the solution of
\begin{equation} \label{eq:wn}
\begin{cases}
\text{find }\wn\in \Zn^C\text{ such that }\\
 \b(\wn, \qn) = \b(\vntilde, \qn) & \forall \qn \in \Qn.
\end{cases}
\end{equation}
This problem has a unique solution due to the continuity and the discrete ``switched inf-sup'' stability in~\eqref{supinf-3} of the bilinear form~$b(\cdot, \cdot)$; see, e.g., \cite{BrezziFortin}.
Furthermore, the following a priori estimate is valid:
\begin{equation} \label{estimate:wn}
|\wn|_{1,\Omega} \overset{\eqref{supinf-3}}{\leq} \frac{1}{\beta_n} \sup_{q_n\in Q_n}\frac{b(\wn, q_n)}{\|q_n\|_{0,\Omega}}  
\overset{\eqref{nucleus}, \eqref{eq:wn}}{=} \frac{1}{\beta_n} \sup_{q_n\in Q_n}\frac{b(\vntilde - \ubf, q_n)}{\|q_n\|_{0,\Omega}}  \leq \frac{1}{\beta_n} |\vntilde - \ubf|_{1,\Omega}.
\end{equation}
Next, define
\begin{equation} \label{ztilde}
\zntilde = \vntilde - \TF\wn,
\end{equation}
where~$\TF$ is the bijection in~\eqref{eq:T}.
Thanks to~\eqref{eq:bT}, we get
\[
\b(\zntilde, \qn)   = \b(\vntilde - \TF\wn, \qn) = \b(\vntilde-\wn, \qn) = 0 \qquad \forall \qn \in \Qn.
 \]
We deduce that~$\zntilde\in \Zntilde$.
Then, we have
\begin{equation} \label{estimate:Twn}
\alphacoer|\TF\wn|^2_{1, \Omega} \overset{\eqref{eq:stab-constants}}{\leq} a_n(\TF\wn, \TF\wn) \overset{\eqref{eq:PiT}, \eqref{eq:S-hypothesis}}{=} \an (\wn, \wn) \overset{\eqref{eq:stab-constants}}{\leq} (1+\alphacont)|\wn|^2_{1, \Omega}.
\end{equation}
This yields
\[
\begin{split}
|\zntilde - \ubf|_{1, \Omega}	& \overset{\eqref{ztilde}}{\leq} |\vntilde- \ubf|_{1, \Omega} + |\TF\wn|_{1, \Omega} \overset{\eqref{estimate:Twn}}{\leq} |\vntilde- \ubf|_{1,\Omega}+\left(\frac{1+\alphacont}{\alphacoer}\right)^{1/2}|\wn|_{1,\Omega} \\
									& \overset{\eqref{estimate:wn}}{\leq} \left( 1+\frac{1}{\beta_n}\left(\frac{1+\alphacont}{\alphacoer}\right)^{1/2} \right)|\vntilde - \ubf|_{1,\Omega},\\
\end{split}
\]
whence the assertion follows.
\end{proof}
 
\begin{remark}
The last part of the proof of Lemma~\ref{lemma:bound-nucleus} also gives
\[
 \inf_{\qn \in \Qn} \sup_{\vntilde \in \Vntilde} \frac{\b(\vntilde,
   \qn)}{|\vn|_{1,\Omega}\| \qn\|_{0, \Omega}}\geq \beta_n\sqrt{\alphacoer}/\sqrt{1+\alphacont} .
\]
\end{remark}
 
\subsection{A priori estimate on pressure} \label{subsection:apriori-p}
In this section, we prove some upper bounds which will be instrumental in the
analysis of the convergence of the error on the pressure obtained by
  the VEM.
\begin{lem}
\label{lemma:apriori-p}
Let~$(\ubf, s)\in [H^1_0(\Omega)]^2 \times L^2_0(\Omega)$  and~$(\un, \sn)\in  \Vn\times \Qn$ be the solutions to~\eqref{Stokes:weak} and~\eqref{Stokes:VEM}, respectively. 
Recall that the bijection~$\TF$ is defined in~\eqref{eq:T}. Then, the following bound is valid:
\begin{equation}  \label{eq:apriori-p}
\begin{split}
\|s- \sn\|_{0,\Omega}\leq \frac{1}{\beta_n}\bigg( 	& \Fcaln + (1+\beta_n) \inf_{\qn \in \Qn}\|s-\qn\|_{0, \Omega}   \\
																& + (1+\alphacont)| \ubf - \TF\un|_{1, \Omega}+(2+\alphacont)\inf_{\upi\in [\mathbb{P}_p(\taun)]^2}|\ubf-\upi|_{1, \taun}\bigg).
\end{split}
\end{equation}
\end{lem}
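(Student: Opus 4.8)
The plan is to derive the pressure estimate from the discrete inf-sup condition applied to $s_n - q_n$ for an arbitrary $q_n \in \Qn$, controlling the resulting expression $\b(\vn, s_n - q_n)$ by using the first equation of the discrete problem~\eqref{Stokes:VEM} and its continuous counterpart~\eqref{Stokes:weak}. The standard triangle-inequality reduction is $\|s - s_n\|_{0,\Omega} \le \inf_{q_n \in \Qn}\|s - q_n\|_{0,\Omega} + \|s_n - q_n\|_{0,\Omega}$, so the core task is to bound $\|s_n - q_n\|_{0,\Omega}$ by the terms on the right-hand side of~\eqref{eq:apriori-p}.

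\medskip

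\noindent\textbf{Key steps.} First, fix $q_n \in \Qn$ and apply the discrete inf-sup condition~\eqref{eq:discrete-infsup}: there exists $\vn \in \Vn$ with $|\vn|_{1,\Omega} = 1$ such that $\beta_n \|s_n - q_n\|_{0,\Omega} \le \b(\vn, s_n - q_n)$. Second, split $\b(\vn, s_n - q_n) = \b(\vn, s_n) - \b(\vn, q_n)$ and, writing $\untilde = \TF\un$, insert $\pm \b(\vn, s)$ and use~\eqref{eq:bT}, \eqref{eq:PiT}, \eqref{eq:S-hypothesis} together with the two weak formulations to rewrite the first term. From~\eqref{Stokes:VEM}, $\b(\vn, s_n) = (\Pizpmt\fbf, \vn)_{0,\Omega} - \an(\un, \vn)$; from~\eqref{Stokes:weak}, $\b(\vn, s) = (\fbf, \vn)_{0,\Omega} - \a(\ubf, \vn)$. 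Subtracting gives
\[
\b(\vn, s_n - s) = ((\Pizpmt - \Id)\fbf, \vn)_{0,\Omega} - \big(\an(\un, \vn) - \a(\ubf, \vn)\big).
\]
Third, estimate each piece: the consistency term $((\Pizpmt - \Id)\fbf, \vn)_{0,\Omega}$ is bounded by $\Fcaln |\vn|_{1,\taun} = \Fcaln$ — here one needs $\vn$ (or rather $\TF\vn$, via~\eqref{eq:PiT}) to be weakly divergence-free or to relate it to $\Zntilde$; since $q_n$ ranges over all of $\Qn$ and $\vn$ can be taken in $\Zn$ without loss, this is admissible after restricting the supremum to $\Zn$. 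Fourth, bound $\an(\un, \vn) - \a(\ubf, \vn)$ by adding and subtracting $\a^\E(\PinablapE\untilde, \PinablapE\vn)$-type terms and a polynomial interpolant $\upi$: using $p$-consistency of $\an$, the coercivity/continuity constants $\alphacont$, the projection identities~\eqref{eq:PiT}, and the estimates already established in Lemma~\ref{lemma:apriori-u} / Lemma~\ref{lemma:apriori-untilde}, one gets a bound of the form $(1+\alphacont)|\ubf - \untilde|_{1,\Omega} + (2+\alphacont)\inf_{\upi}|\ubf - \upi|_{1,\taun}$. Finally, collect: $\b(\vn, s_n - q_n) = \b(\vn, s_n - s) + \b(\vn, s - q_n)$, where the last term is bounded by $\|s - q_n\|_{0,\Omega}|\vn|_{1,\Omega}$ times a constant (the continuity of $\div$), contributing $(1+\beta_n)\inf_{q_n}\|s - q_n\|_{0,\Omega}$ after reabsorbing $\beta_n$; dividing through by $\beta_n$ yields~\eqref{eq:apriori-p}.

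\medskip

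\noindent\textbf{Main obstacle.} The delicate point is handling the consistency term $((\Pizpmt - \Id)\fbf, \vn)_{0,\Omega}$: the definition of $\Fcaln$ only controls it for $\vntilde \in \Zntilde$, i.e.\ Poisson-like and weakly divergence-free, whereas the $\vn$ produced by the discrete inf-sup condition is Stokes-like and generally \emph{not} divergence-free. The resolution is to observe that $\b(\vn, s_n - q_n)$ only sees the part of $\vn$ not killed by the inf-sup pairing, so one may decompose $\vn = \vn^0 + \vn^C$ with $\vn^0 \in \Zn$, note that $\an(\un, \vn^C) - \a(\ubf, \vn^C)$ and the consistency term on $\vn^C$ are controlled separately, and for the $\vn^0$ part invoke $\TF\vn^0 \in \Zntilde$ (by~\eqref{eq:bT}) so that $((\Pizpmt - \Id)\fbf, \vn^0)_{0,\Omega} = ((\Pizpmt - \Id)\fbf, \TF\vn^0)_{0,\Omega}$ by~\eqref{eq:PiT} applied to $\Pizpmt$, bringing $\Fcaln$ into play. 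Carefully bookkeeping the constants through this decomposition — and confirming that no spurious $\beta_n^{-1}$ factors accumulate beyond the single outer one — is the part that requires the most attention; everything else is the routine Strang-type machinery for saddle-point VEM already rehearsed in Lemmata~\ref{lemma:apriori-untilde} and~\ref{lemma:apriori-u}.
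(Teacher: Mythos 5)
Your overall skeleton coincides with the paper's proof: the triangle inequality for $\|s-\sn\|_{0,\Omega}$, the discrete inf-sup condition applied to $\sn-\qn$, elimination of $\b(\vn,\sn-s)$ via the discrete and continuous weak formulations, replacement of $\un$ by $\untilde=\TF\un$ through~\eqref{eq:PiT} and~\eqref{eq:S-hypothesis}, and the insertion of a polynomial interpolant~$\upi$; the constants come out the same way. The paper itself simply writes $|(\fbf-\Pizpmt\fbf,\vn)|\le\Fcaln|\vn|_{1,\Omega}$ for the inf-sup test function $\vn$ and moves on.

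Your attempted justification of that consistency step, however, contains two concrete errors. First, you claim the test function ``can be taken in $\Zn$ without loss''. This is backwards: by the definition~\eqref{nucleus-Vn} of $\Zn$ one has $\b(\vn,\qn)=0$ for every $\vn\in\Zn$ and every $\qn\in\Qn$, and since $\sn-\qn\in\Qn$ the supremum in~\eqref{eq:discrete-infsup} restricted to $\Zn$ is identically zero, so the inf-sup bound becomes vacuous. The test function that realizes the inf-sup necessarily lies (modulo $\Zn$) in the complement $\Zn^C$, i.e.\ it is precisely \emph{not} weakly divergence-free. Second, in your fallback decomposition $\vn=\vn^0+\vn^C$ you assert $((\Pizpmt-\Id)\fbf,\vn^0)_{0,\Omega}=((\Pizpmt-\Id)\fbf,\TF\vn^0)_{0,\Omega}$ ``by~\eqref{eq:PiT}''; but~\eqref{eq:PiT} only gives $\Pizpmt\vn^0=\Pizpmt(\TF\vn^0)$, and Lemma~\ref{lemma:eq-polynomials} only equates moments against polynomials of degree $p-2$, so $(\fbf,\vn^0)_{0,\Omega}\ne(\fbf,\TF\vn^0)_{0,\Omega}$ for a general $\fbf\in[L^2(\Omega)]^2$. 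Moreover the $\vn^C$ part --- the only part that actually contributes to $\b(\vn,\sn-\qn)$ --- is declared ``controlled separately'' with no mechanism, even though it is exactly the part not covered by the definition of $\Fcaln$ on $\Zntilde$. The step can be repaired cheaply: estimate $|(\fbf-\Pizpmt\fbf,\vn)|\le\|(\Id-\Pizpmt)\fbf\|_{0,\Omega}\,\|\vn\|_{0,\Omega}\lesssim\|(\Id-\Pizpmt)\fbf\|_{0,\Omega}\,|\vn|_{1,\Omega}$ directly (or, equivalently, read $\Fcaln$ as a constant valid for all test functions in $\Vn\cup\Vntilde$, which is what the paper implicitly does and costs nothing in the subsequent convergence analysis). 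As written, though, your treatment of the consistency term does not close.
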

\begin{proof}
For all~$\qn\in \Qn$, the triangle inequality yields 
\[
\| s - \sn \|_{0, \Omega}\leq \| s - \qn \|_{0, \Omega} +\| \sn - \qn \|_{0, \Omega}.
\]
By the discrete inf-sup condition~\eqref{eq:discrete-infsup}, there exists~$\vn\in \Vn$ such that
\[
\beta_n\| \sn - \qn \|_{0, \Omega}\leq \frac{ \b(\vn, \sn - \qn)}{|\vn|_{1, \Omega}}.
\]
We have
\[
\b(\vn, \sn - \qn) = \b(\vn, s - \qn) + \b(\vn, \sn- s)
\]
and
\[
 |b(\vn, s- \qn)|\leq |\vn|_{1, \Omega} \| s - \qn\|_{0,\Omega}.
\]
For $\untilde = \TF\un$, we deduce
\[
\begin{split}
 |\b(\vn, \sn-s)|	& \leq |\a(\ubf, \vn)  - \an(\un, \vn)| + |(\fbf - \Pizpmt\fbf,\vn)|  \\
					& = |\a(\ubf, \vn)  - \an(\untilde, \vn)| + |(\fbf - \Pizpmt\fbf,\vn)| \\
 				   	& \leq |\sumE a^{\E}(\ubf-\upi, \vn)| + |\sumE \anE (\upi-\untilde, \vn)| + \Fcaln |\vn|_{1, \Omega} \\
					& \leq \left((2+\alphacont)|\ubf-\upi|_{1, \taun}   + (1+\alphacont) |\ubf-\untilde|_{1, \Omega} + \Fcaln\right)|\vn|_{1,\Omega},
\end{split}
\]
whence the assertion follows.
\end{proof}
Define
\begin{equation} \label{gammap}
\gamma(\p) = \dfrac{\alphacont+1}{\alphacoer}.
\end{equation}
Combining Lemmata~\ref{lemma:apriori-untilde}, \ref{lemma:apriori-u}, and~\ref{lemma:apriori-p}, we obtain the following result.
\begin{thm} \label{theorem:abstract}
Let~$(\ubf, s)\in [H^1_0(\Omega)]^2 \times L^2_0(\Omega)$  and~$(\un, \sn)\in  \Vn\times \Qn$ be the solutions to~\eqref{Stokes:weak} and~\eqref{Stokes:VEM}, respectively.
Recall that~$\gamma(\p)$ is defined in~\eqref{gammap}.
Then, there exists a constant~$C>0$ independent of the discretization parameters such that
\begin{equation} \label{eq:apriori-up}
\begin{split}
|\ubf - \Pinablap \un|_{1, \taun} + \beta_n \|s -\sn\|_{0, \Omega}\leq C 	& \gamma(\p) \bigg(  \Fcaln +  \sqrt{\gamma(\p)}\inf_{\vntilde\in \Vntilde}|\ubf-\vntilde|_{1, \Omega}  \\
																							& + \inf_{\upi\in [\mathbb{P}_{\p}(\taun)]^2}|\ubf-\upi|_{1,\taun} + \inf_{\qn \in \Qn}\|s-\qn\|_{0, \Omega}  \bigg).
\end{split}
\end{equation}
\end{thm}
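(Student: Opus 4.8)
The proof assembles the three previously established a priori lemmas (Lemma~\ref{lemma:apriori-untilde}, Lemma~\ref{lemma:apriori-u}, Lemma~\ref{lemma:apriori-p}) together with Lemma~\ref{lemma:bound-nucleus}, combining the right-hand sides and collecting constants. The main point is that $\ubf$, being the exact velocity, is weakly divergence-free in the sense of~\eqref{nucleus}, so Lemma~\ref{lemma:bound-nucleus} applies and lets us replace $\inf_{\zntilde\in\Zntilde}|\ubf-\zntilde|_{1,\Omega}$ everywhere by $\big(1+\sqrt{(1+\alphacont)/\alphacoer}\big)\inf_{\vntilde\in\Vntilde}|\ubf-\vntilde|_{1,\Omega}$, i.e.\ by a factor $\lesssim\sqrt{\gamma(\p)}$ times the best Poisson-like approximation error. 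This is what produces the $\sqrt{\gamma(\p)}$ weight in front of $\inf_{\vntilde\in\Vntilde}|\ubf-\vntilde|_{1,\Omega}$ in~\eqref{eq:apriori-up}.

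First I would bound $|\ubf-\Pinablap\un|_{1,\taun}$ using Lemma~\ref{lemma:apriori-u}, then insert the bound of Lemma~\ref{lemma:bound-nucleus} for the $\Zntilde$-infimum; all prefactors appearing there ($1/\alphacoer$, $(\alphacont+1)/\alphacoer$, $(\alphacont+2)/\alphacoer$, and $(1+\alphacont)/\alphacoer$ times $1+\sqrt{(1+\alphacont)/\alphacoer}$) are, up to absolute constants, either $\le\gamma(\p)$ or $\le\gamma(\p)\sqrt{\gamma(\p)}$, since $\alphacoer\le 1\le\alphacont$ forces $\gamma(\p)\ge 1$ and $\sqrt{(1+\alphacont)/\alphacoer}\le\sqrt{2\gamma(\p)}$. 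Hence the velocity part alone is bounded by $C\gamma(\p)\big(\Fcaln+\sqrt{\gamma(\p)}\inf_{\vntilde\in\Vntilde}|\ubf-\vntilde|_{1,\Omega}+\inf_{\upi}|\ubf-\upi|_{1,\taun}\big)$.

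Next I would handle $\beta_n\|s-\sn\|_{0,\Omega}$ via Lemma~\ref{lemma:apriori-p}. That bound contains the term $(1+\alphacont)|\ubf-\TF\un|_{1,\Omega}$; I would estimate $|\ubf-\TF\un|_{1,\Omega}$ using Lemma~\ref{lemma:apriori-untilde} (again substituting Lemma~\ref{lemma:bound-nucleus} for the $\Zntilde$-infimum), which contributes the factor $(1+\alphacont)/\alphacoer\le\gamma(\p)$ on top of the already-present structure, yielding once more terms of the form $\gamma(\p)\cdot(\text{stuff})$ and $\gamma(\p)\sqrt{\gamma(\p)}\inf_{\vntilde\in\Vntilde}|\ubf-\vntilde|_{1,\Omega}$; the remaining term $(1+\beta_n)\inf_{\qn\in\Qn}\|s-\qn\|_{0,\Omega}$ is absorbed into $C\gamma(\p)\inf_{\qn}\|s-\qn\|_{0,\Omega}$ since $\beta_n$ is bounded (Lemma~\ref{lemma:discrete-infsup}) and $\gamma(\p)\ge1$. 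Adding the two estimates and taking $C$ to be the maximum of the finitely many absolute constants produced along the way gives~\eqref{eq:apriori-up}.

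The only mildly delicate point — really just bookkeeping — is verifying that every stabilization-dependent prefactor that shows up is dominated by an integer power of $\gamma(\p)$ no larger than $\gamma(\p)^{3/2}$, and in particular that the nested factor from Lemma~\ref{lemma:bound-nucleus} (which carries a square root of $(1+\alphacont)/\alphacoer$) contributes exactly the half-power, not a full extra power, of $\gamma(\p)$; this is what makes the coefficient of $\inf_{\vntilde\in\Vntilde}|\ubf-\vntilde|_{1,\Omega}$ come out as $\gamma(\p)^{3/2}$ rather than $\gamma(\p)^2$. Once that is checked, absorbing constants and collecting terms is routine.
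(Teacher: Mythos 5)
Your proof is correct and follows exactly the route the paper takes: the paper's entire proof of Theorem~\ref{theorem:abstract} is the single sentence ``Combining Lemmata~\ref{lemma:apriori-untilde}, \ref{lemma:apriori-u}, and~\ref{lemma:apriori-p}, we obtain the following result,'' with Lemma~\ref{lemma:bound-nucleus} implicitly supplying the $\sqrt{\gamma(\p)}$ factor in front of $\inf_{\vntilde\in\Vntilde}|\ubf-\vntilde|_{1,\Omega}$, and your write-up simply makes that combination explicit. The one bookkeeping point you flag---that the nested prefactor $(1+\alphacont)^2/\alphacoer$ produced by inserting Lemma~\ref{lemma:apriori-untilde} into the term $(1+\alphacont)|\ubf-\TF\un|_{1,\Omega}$ of Lemma~\ref{lemma:apriori-p} must be dominated by $C\,\gamma(\p)^{3/2}$---is precisely the subtlety the paper glosses over, so your treatment is, if anything, more explicit than the original.
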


\section{The convergence rate of the $\p$- and $\h\p$-versions} \label{section:convergence}
In Section~\ref{section:a-priori}, we have established an abstract error analysis for method~\eqref{Stokes:VEM}.
Notably, we have proven that the error on the velocity and the pressure can be estimated from above in terms of best polynomial approximation
and best interpolation results in virtual element spaces.
With this at hand, in this section, we state the convergence of the $\p$- and
$\h\p$-versions of method~\eqref{Stokes:VEM} for analytic, weighted
  analytic, and finite Sobolev regularity solutions;
see Sections~\ref{subsection:pVE-spaces} and~\ref{subsection:hpVE-spaces}, respectively.

\subsection{$\p$-VEM} \label{subsection:pVE-spaces}
Since all the necessary best approximation results have been proven in~\cite{hpVEMbasic}, we state the main convergence result only.
\begin{thm} \label{theorem:p-version}
Let~$k \in \mathbb R^+$ be such that $(\ubf, s)\in [H^1_0(\Omega) \cap H^{k+1}(\Omega)]^2 \times [H^k(\Omega) \cap L^2_0(\Omega)]$
and~$(\un, \sn)\in  \Vn\times \Qn$ are the solutions to~\eqref{Stokes:weak} and~\eqref{Stokes:VEM}, respectively.
Let the assumptions (\textbf{A0-$\p$}), (\textbf{A1}), and (\textbf{A2}) be valid.
Recall that~$\gamma(\p)$ is defined in~\eqref{gammap}.
Then, there exists a positive constant~$C$ independent of the discretization parameters such that
\begin{equation} \label{algebraic-convergence:p-version}
|\ubf - \Pinablap \un|_{1, \taun} + \beta_n \|s -\sn\|_{0, \Omega}\leq C \gamma(\p)^{\frac{3}{2}} \frac{\h^{\min(k,\p)}}{\p^k} \left( \Vert u \Vert_{k+1, \Omega} + \Vert s \Vert_{k,\Omega} \right).
\end{equation}
Furthermore, if $\ubf$ and~$s$ are the restrictions of suitable analytic functions over an extension of the domain\footnote{See~\cite[Section~5]{hpVEMbasic} for more details on this point.}~$\Omega$,
then there exist two positive constants~$C_1$ and~$C_2$ independent of the discretization parameters such that
\begin{equation}   \label{exponential-convergence:p-version}
|\ubf - \Pinablap \un|_{1, \taun} + \beta_n \|s -\sn\|_{0, \Omega}\leq C_1 \exp(-C_2 \, \p).
\end{equation}
\end{thm}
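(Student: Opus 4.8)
The plan is to deduce Theorem~\ref{theorem:p-version} from the abstract estimate~\eqref{eq:apriori-up} of Theorem~\ref{theorem:abstract} by controlling each of the four terms on its right-hand side, using the polynomial approximation and virtual element interpolation estimates of~\cite{hpVEMbasic}. First I would observe that, under assumption (\textbf{A0-$\p$}), the discrete inf-sup constant~$\beta_n$ is bounded below independently of~$\p$ and~$\h$ by Lemma~\ref{lemma:discrete-infsup}, so it may be absorbed into the constant~$C$; similarly, the factor~$\gamma(\p)$ is tracked explicitly and collected at the end (it is what produces the~$\gamma(\p)^{3/2}$ prefactor, the~$3/2$ coming from the extra~$\sqrt{\gamma(\p)}$ multiplying the interpolation term in~\eqref{eq:apriori-up}).

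Next I would bound the individual terms. For the best polynomial approximation term~$\inf_{\upi\in[\mathbb P_\p(\taun)]^2}|\ubf-\upi|_{1,\taun}$, the elementwise $\h\p$ polynomial approximation estimates in Sobolev spaces (e.g.~\cite[Lemma~4.3 or similar]{hpVEMbasic}) give a bound of order~$\h^{\min(k,\p)}\p^{-k}\|\ubf\|_{k+1,\Omega}$; in the analytic case the same results yield an exponential bound~$C_1'\exp(-C_2'\p)$. For the pressure best-approximation term~$\inf_{\qn\in\Qn}\|s-\qn\|_{0,\Omega}$, the $L^2$-projection onto piecewise polynomials of degree~$\p-1$ gives the analogous~$\h^{\min(k,\p)}\p^{-k}\|s\|_{k,\Omega}$ (respectively exponential) rate. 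For the virtual element interpolation term~$\inf_{\vntilde\in\Vntilde}|\ubf-\vntilde|_{1,\Omega}$, I would invoke the explicit best-interpolation estimates for the Poisson-like VE space from~\cite{hpVEMbasic} (this is precisely why the equivalent Poisson-like formulation was set up): these are again of the same algebraic/exponential type. Finally, the term~$\Fcaln$, measuring the right-hand side quadrature error~$|((\Id-\Pizpmt)\fbf,\vntilde)_{0,\Omega}|$, is handled by writing~$\fbf=-\Deltabold\ubf-\nabla s$ and using orthogonality of~$\Pizpmt$: one subtracts an arbitrary piecewise polynomial and estimates, obtaining a bound by the same best-approximation quantities for~$\ubf$ and~$s$ (or, in the analytic case, exponentially small).

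Collecting these four bounds into~\eqref{eq:apriori-up} and using the quasi-uniformity of the mesh to pass from elementwise to global norms yields~\eqref{algebraic-convergence:p-version}; in the analytic case every term is exponentially small in~$\p$, and since~$\gamma(\p)$ grows only algebraically (by the explicit stabilization estimates of Remark~\ref{remark:explicit-STAB}, $\alphacoer\gtrsim\p^{-5}$ and~$\alphaconthat$ grows at most algebraically), the product~$\gamma(\p)^{3/2}\exp(-C_2\p)$ is still of the form~$C_1\exp(-C_2'\p)$ after adjusting constants, which gives~\eqref{exponential-convergence:p-version}. The main obstacle, and the step requiring the most care, is the treatment of the virtual interpolation term~$\inf_{\vntilde\in\Vntilde}|\ubf-\vntilde|_{1,\Omega}$ together with the right-hand side term~$\Fcaln$: one must ensure that the $\p$-explicit interpolation estimates from~\cite{hpVEMbasic}, originally stated for scalar Poisson-like spaces, transfer verbatim to the vector space~$\Vntilde$ and are compatible with the extra~$\sqrt{\gamma(\p)}$ factor, and that the consistency error~$\Fcaln$ does not hide any additional $\p$-dependent loss — both points follow from the analysis in~\cite{hpVEMbasic} applied componentwise, but they are where the $\p$-explicitness of the whole argument is actually cashed in.
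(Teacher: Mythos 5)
Your proposal is correct and follows essentially the same route as the paper: start from the abstract estimate \eqref{eq:apriori-up} of Theorem~\ref{theorem:abstract} and bound the four right-hand-side terms via the $\h\p$ polynomial approximation and virtual element interpolation lemmata of~\cite{hpVEMbasic} (the paper cites Lemmata~4.2--4.4 there for the finite-regularity case and Lemmata~5.2--5.4 for the analytic case), absorbing $\beta_n$ and the algebraically growing $\gamma(\p)$ into the constants. Your write-up is in fact more detailed than the paper's own two-line proof, and the points you flag as delicate (the vector-valued transfer of the interpolation estimates and the control of $\Fcaln$) are exactly what the cited references supply.
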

\begin{proof}
Starting from the abstract error analysis in Theorem~\ref{theorem:abstract}, it suffices to be able to show $\h$- and $\p$-upper bounds on the four terms appearing on the right-hand side of~\eqref{eq:apriori-up}.
We can show an upper bound on them using~\cite[Lemmata~4.2, 4.3, and~4.4]{hpVEMbasic} for the finite Sobolev regularity case,
and~\cite[Lemmata~5.2, 5.3, and~5.4]{hpVEMbasic} for the analytic regularity case.

The bound~\eqref{algebraic-convergence:p-version} follows in a straightforward manner,
whereas, in order to prove~\eqref{exponential-convergence:p-version}, we apply similar results as in~\cite[Theorem~5.2]{hpVEMbasic}.
\end{proof}

From Theorem~\ref{theorem:p-version}, we have that the $\p$-version of the
method converges
exponentially for analytic solutions and algebraically for solution with (sufficiently high) finite Sobolev regularity.
However, since solutions to the Stokes problem are in general singular, as detailed in Theorem~\ref{theorem:regularity}, we are also interested in analysing the convergence of the $\h\p$-version of the method.
Indeed, it is known that such approach allows for exponential convergence with respect to a suitable root of the total number of degrees of freedom for singular solutions as well.
We postpone the design of $\h\p$-virtual element spaces for the Stokes problem, as well as the convergence of the error, to Section~\ref{subsection:hpVE-spaces} below.

\begin{remark}
An additional reason why the $\h\p$-version is more suited than the~$\p$-version for the approximation of singular solutions to the Stokes problem
is that the algebraic rate of convergence in~\eqref{algebraic-convergence:p-version} contains the suboptimal term~$\gamma(\p)$ due to the stabilization of the method.
\end{remark}

\begin{remark}
In Theorem~\ref{theorem:p-version}, we proved upper bounds for errors of the form
\begin{equation} \label{computable:errors}
|\ubf - \Pinablap \un|_{1, \taun} + \beta_n \|s -\sn\|_{0, \Omega},
\end{equation}
which differ from those that are typically investigate in the VEM literature, i.e.,
\[
|\ubf - \un|_{1, \taun} + \beta_n \|s -\sn\|_{0, \Omega}.
\]
The reason for this is that we need to resort to Poisson-like spaces, when performing the theoretical analysis, and we know from Proposition~\ref{proposition:projT}
that functions in Poisson-like and Stokes-like virtual element spaces, sharing the same degrees of freedom, have the same~$\Pinablap$ projection.
In turn, we had to resort to Poisson-like virtual element spaces, because we are not able to construct a stabilization on Stokes-like virtual element spaces,
with bounds on the stabilization constants, which are explicit in terms of the degree of accuracy of the method.
On the positive side, the two errors that we bound are those that we actually compute in the numerical experiments presented in Section~\ref{section:numerical-experiments} below.
\end{remark}

\subsection{$\h\p$-VEM} \label{subsection:hpVE-spaces}
In the present section, we construct $\h\p$-virtual element spaces for the approximation of nonsmooth solutions to the Stokes problem~\eqref{Stokes:weak}.
The main idea of the construction hinges upon employing
\begin{itemize}
\item geometric refinement of the mesh towards the singular points;
\item $\p$-refinement in the elements where the solution is smooth.
\end{itemize}

For the sake of exposition, assume that the right-hand side~$\fbf$ in~\eqref{Stokes:weak} is smooth.
Thanks to Theorems~\ref{theorem:regularity} and~\ref{theorem:regularity2}, the solution~$(\ubf,s)$ to~\eqref{Stokes:weak} consists
of two functions that are smooth everywhere but at neighbourhoods of the vertices of the polygonal domain~$\Omega$.
There, the Sobolev regularity is known a priori and depends on the amplitude of the angle.

The first step in the construction of $\h\p$-virtual element spaces resides in introducing the layer of the mesh associated with the set of vertices~$\fC$.
We assume that the mesh~$\taun$ consists of~$n+1$ layers, where the first one is given by
\[
L_n^0 := \{ \E \in \taun \mid \text{there exists a unique } \fc \in \fC \text{ such that } \fc \in \EE \},
\]
and the others are defined recursively as
\[
L_n^j := \{ \E \in \taun \mid \E \not\in \cup _{\ell=0}^{j-1} L_n^\ell  ; \; \exists \Etilde \in L_n^{j-1} \text{ such that } \overline \E \cap \overline{\Etilde} \ne \emptyset  \} \quad \quad \forall j=1,\dots, n.
\]
Further, for each $K\in \taun$, we denote any of the closest corner of the domain to $K$, i.e., any of the $\fc\in\fC$ such that
$\dist(\fc, K)\leq \dist(\tilde{\fc}, K)$ for all $\tilde{\fc}\in \fC\setminus \{\fc\}$, by~$\fc_K$.
For the sake of simplicity, we assume the uniqueness of such a vertex.

With this at hand, we say that the sequence of meshes~$\{ \taun \}_{n\in \mathbb N}$ is geometrically refined towards~$\fc_K$
if there exists a grading parameter~$\sigma \in (0,1)$ such that, for all~$n\in \mathbb N$,
\begin{equation} \label{grading:parameter}
\hE \simeq \dist(\fc_K, K) \simeq\sigma ^{n-j} \quad \quad \forall \E \in L_n^j, \quad \forall j=1,\dots,n
\end{equation}
and
\begin{equation}
  \label{grading:parameter2}
 \hE \simeq \sigma^n \quad \quad \forall \E\in L^0_n.
\end{equation}
The conditions~\eqref{grading:parameter}--\eqref{grading:parameter2} asserts that the elements abutting the vertices in~$\Ncal$ are small, whereas the elements in the layers with large index~$j$ have fixed size asymptotically.
Note that the assumption (\textbf{A0-$\h\p$}) is satisfied automatically.
We require an additional assumption, which is necessary to show the exponential convergence result of Theorem \ref{theorem:hp-version} below; see~\cite[Assumption (\textbf{D4})]{hpVEMcorner}.
\begin{enumerate}
\item[(\textbf{A4}-$hp$)] For all $n\in \mathbb{N}$, let $\taun^1 = \taun
  \setminus L^0_n$. There exist a collection of
  squares $\cQ_n$ such that
  \begin{itemize}
  \item $\card(\cQ_n) = \card(\taun^1)$; for each $K\in \taun^1$, there exists
    $Q=Q(K)\in \cQ_n$ such that $K\subset Q$  and $h_K\simeq h_Q$. Additionally,
    $\dist(\fc_K, Q(K))\simeq h_K$;
  \item every $\xbf\in \Omega$ belongs at most to a fixed number of squares $Q$,
    uniformly in the discretization parameters.
  \end{itemize}
  In addition, for all $K\in L^0_n$, $K$ is star shaped with
  respect to $\fc_K$ and the subtriangulation obtained by joining $\fc_K$
  with the other vertices of $K$ is shape regular.
\end{enumerate}
Although necessary in the proof of Theorem~\ref{theorem:hp-version}, the condition (\textbf{A4}-$hp$) is not necessary in practice.
For instance, the $\h\p$-version of the method converges exponentially also on meshes, as those depicted in Figure \ref{figure:layers-Lshaped} (right); see Section~\ref{subsection:hp-version} below.

Next, we introduce a distribution of degrees of accuracy, by picking a high degree on large elements, where the solution is smooth, and decrease such degree linearly while decreasing the size of the elements.
More precisely, given a positive parameter~$\mu$,
set~$\Tn := \card(\taun)$ and introduce~$\pbf \in \mathbb N ^{\Tn}$ as follows:
\begin{equation} \label{distribution:elements}
\pbf_{\E} := \lceil \mu ({j}+1) \rceil \quad \quad  \text{where } \E \in L_n^j \quad \forall j=0,\dots,n+1.
\end{equation}
The vector~$\pbf$ represents the distribution of the degrees of accuracy over a mesh~$\taun$.
Given~$\Epsilonn := \card(\En)$, we also introduce a vector~$\pbfEn \in \mathbb N^{\Epsilonn}$, which represents the distribution of polynomial degrees over the skeleton of the mesh, and is defined as
\[
\pbfEn_{\e} := 
\begin{cases}
\max (\p_{\E_1}, \p_{\E_2}) 	& \text{if } \e \in \EnI \text{ and } \overline{\E_1} \cap \overline{\E_2}\\
\pE									& \text{if } \e \in \EnB \text{ and } \e \in \EE \text{ for some } \E \in \taun.
\end{cases}
\]
We can now define the $\h\p$-space for the velocities as the space of functions that are piecewise polynomials with distribution~$\pbfEn$ over the skeleton of the mesh
and which solve problems of the form~\eqref{local:problem} with right-hand side being polynomials of degree~$\pbf_{\E}-2$ (vector) and~$\p_{\E}-1$, respectively, on~$\E$.
On the other hand, we define the $\h\p$-virtual element space for the pressure as the space of piecewise polynomials of degree~$\pbf_{\E}$ on~$\E$.

Using the abstract analysis in Theorem~\ref{theorem:abstract} together with the tools in~\cite[Section~5]{hpVEMcorner}, we state the following result.
\begin{thm} \label{theorem:hp-version}
Let~$\{ \taun \}_{n \in \mathbb N}$ be a sequence of geometrically refined meshes satisfying the assumptions (\textbf{A1}), (\textbf{A2}), and (\textbf{A4}-$hp$),
with grading parameter~$\sigma$ satisfying~\eqref{grading:parameter} and \eqref{grading:parameter2}.
Let the  virtual element spaces $\Vn$ and $\Qn$ be constructed in an $\h\p$-fashion with suitable choice of the parameter~$\mu$ in~\eqref{distribution:elements}.
Suppose that
  there exist $c_\gamma>0$ and $k\in\mathbb{R}$ such that, for all $p\in
  \mathbb{N}$, $\gamma(p)\leq c_\gamma p^k$, with  $\gamma(p)$ defined in \eqref{gammap}.

Let the right-hand side~$\fbf$ be analytic in $\Omega$ and let
$(\ubf, s)$ and~$(\un, \sn)\in \Vn\times \Qn$ be the solutions
to~\eqref{Stokes:weak} and~\eqref{Stokes:VEM}, respectively.
For all~$n \in \mathbb N$, define~$\NV:= \card(\Vn) + \card(\Qn)$. 
Then, there exist two positive constants~$C$ and~$b$ such that
\[
\vert \ubf - \Pinablap \un \vert_{1,\taun} + \Vert s -\sn \Vert_{0,\Omega} \le  C \exp (-b \sqrt[3] \NV).
\]
\end{thm}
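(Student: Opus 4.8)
The plan is to feed the abstract error estimate of Theorem~\ref{theorem:abstract} into the $\h\p$-best-approximation machinery developed in~\cite{hpVEMcorner} for the Poisson problem, term by term. Starting from~\eqref{eq:apriori-up}, I must control four quantities: the polynomial best-approximation error $\inf_{\upi\in[\mathbb{P}_\p(\taun)]^2}|\ubf-\upi|_{1,\taun}$, the pressure best-approximation error $\inf_{\qn\in\Qn}\|s-\qn\|_{0,\Omega}$, the virtual-element best-interpolation error $\inf_{\vntilde\in\Vntilde}|\ubf-\vntilde|_{1,\Omega}$, and the data-oscillation term $\Fcaln$. By Theorems~\ref{theorem:regularity} and~\ref{theorem:regularity2}, since $\fbf$ is analytic (hence in $[\cK^\varpi_{\ugamma-2}(\Omega)]^2$ for an admissible $\ugamma$), we have $\ubf\in[\cK^\varpi_\ugamma(\Omega)]^2$ and $s\in\cK^\varpi_{\ugamma-1}(\Omega)$, i.e.\ both solution components lie in weighted analytic classes. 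This is exactly the regularity under which the $\h\p$-approximation results of~\cite{hpVEMcorner} deliver exponential convergence in the number of degrees of freedom on geometrically graded meshes with linearly increasing polynomial degree.

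The key steps, in order, would be: (i) invoke the weighted-analytic regularity above; (ii) apply the elementwise $\h\p$ polynomial approximation estimates of~\cite{hpVEMcorner} (the analogues of their Lemmata on best polynomial approximation in Kondrat'ev spaces) to bound $\inf_{\upi}|\ubf-\upi|_{1,\taun}\lesssim C\exp(-b_1\sqrt[3]{\NV})$ on the geometrically refined mesh with degree distribution~\eqref{distribution:elements}, using (\textbf{A1}), (\textbf{A2}), (\textbf{A4}-$hp$), the grading~\eqref{grading:parameter}--\eqref{grading:parameter2}, and a suitable choice of $\mu$; (iii) do the same for the pressure term $\inf_{\qn\in\Qn}\|s-\qn\|_{0,\Omega}$, noting that $\Qn$ consists of piecewise polynomials of degree $\pbf_\E$ and that $s$ is in the weighted analytic class of one lower weight — again yielding an $\exp(-b_2\sqrt[3]{\NV})$ bound; (iv) bound the virtual-element interpolation error $\inf_{\vntilde\in\Vntilde}|\ubf-\vntilde|_{1,\Omega}$ by combining the explicit $\h\p$-stabilization estimates quoted in Remark~\ref{remark:explicit-STAB} (which give $\gamma(p)\lesssim p^{k}$ polynomially in $p$, as assumed in the hypothesis) with the $\h\p$ best-interpolation result in virtual element spaces from~\cite{hpVEMcorner}, so that even after picking up the factor $\sqrt{\gamma(\p)}$ in~\eqref{eq:apriori-up} the product remains exponentially small, since a polynomial-in-$p$ prefactor cannot defeat the $\exp(-b\sqrt[3]{\NV})$ decay (here one uses that on the geometric mesh $p\lesssim n\lesssim \sqrt[3]{\NV}$, so $\gamma(p)\lesssim \NV^{k/3}$ is absorbed by slightly enlarging $b$); (v) bound $\Fcaln$: since $(\Id-\Pizpmt)\fbf$ measures data oscillation and $\fbf$ is analytic, an elementwise $\h\p$ polynomial approximation of $\fbf$ together with $\|v-\Pizpmt v\|\lesssim (h_\E/\p_\E)|v|_1$-type bounds for $\vntilde\in\Zntilde$ gives $\Fcaln\lesssim C\exp(-b_3\sqrt[3]{\NV})$ as well. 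Finally, (vi) combine all four bounds in~\eqref{eq:apriori-up}, absorb the polynomial-in-$p$ factors $\gamma(\p)^{3/2}$ into the exponential by reducing $b$, and observe that $\beta_n$ is bounded below by a constant independent of $n$ and $\p$ thanks to Lemma~\ref{lemma:discrete-infsup}, so the pressure contribution $\beta_n\|s-\sn\|_{0,\Omega}$ can be replaced by $\|s-\sn\|_{0,\Omega}$ up to a constant; this yields the claimed $C\exp(-b\sqrt[3]{\NV})$.

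The main obstacle I expect is step~(iv), the interplay between the $p$-growth of the stabilization constants $\gamma(\p)$ and the exponential smallness of the best-interpolation error in the \emph{Poisson-like} space $\Vntilde$. One must be careful that the $\h\p$ interpolation operator in~\cite{hpVEMcorner} is constructed precisely for the Poisson-like space (which is why the whole bijection apparatus of Section~\ref{subsection:VEM-Poisson} was set up), that its approximation estimate on $L^0_n$ exploits the extra structural assumption (\textbf{A4}-$hp$) on the corner layer, and that the degree distribution~\eqref{distribution:elements} with parameter $\mu$ is chosen compatibly with the grading parameter $\sigma$ and the singular exponents $\lambda_{\fci}$ of~\eqref{eq:sing-exp} so that the geometric-mesh $\h\p$ rate $\exp(-b\sqrt[3]{\NV})$ actually holds — this is where the hypothesis $\gamma(p)\le c_\gamma p^k$ is used in an essential way, to guarantee that after multiplying the interpolation bound by $\gamma(\p)^{3/2}\sqrt{\gamma(\p)}$ one still has, on the geometric mesh, a net exponential decay in $\sqrt[3]{\NV}$. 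Once this compatibility bookkeeping is done, the assembly in step~(vi) is routine.

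\begin{proof}
We combine the abstract bound~\eqref{eq:apriori-up} with the $\h\p$-approximation estimates of~\cite[Section~5]{hpVEMcorner}.

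\emph{Regularity.} Since $\fbf$ is analytic in $\Omega$, it belongs to $[\cK^\varpi_{\ugamma-2}(\Omega)]^2$ for any $\ugamma$ with $0<\gamma_{\fci}-1<\lambda_{\fci}$, $\fci\in\fC$. By Theorem~\ref{theorem:regularity2}, $\ubf\in[\cK^\varpi_{\ugamma}(\Omega)]^2$ and $s\in\cK^\varpi_{\ugamma-1}(\Omega)$.

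\emph{The four terms in~\eqref{eq:apriori-up}.} Fix $\mu$ in~\eqref{distribution:elements} according to~\cite[Section~5]{hpVEMcorner}, compatibly with $\sigma$ and with the exponents $\lambda_{\fci}$. On the geometrically graded mesh $\taun$ satisfying~(\textbf{A1}), (\textbf{A2}), (\textbf{A4}-$hp$), \eqref{grading:parameter}, and~\eqref{grading:parameter2}, the $\h\p$-best polynomial approximation results of~\cite[Section~5]{hpVEMcorner} applied componentwise to $\ubf\in[\cK^\varpi_\ugamma(\Omega)]^2$ yield a positive constant $b_1$ with
\[
\inf_{\upi\in[\mathbb{P}_\p(\taun)]^2}|\ubf-\upi|_{1,\taun}\leq C\exp(-b_1\sqrt[3]{\NV}).
\]
Applying the same results to $s\in\cK^\varpi_{\ugamma-1}(\Omega)$ over the space $\Qn$ of piecewise polynomials of degree $\pbf_\E$ gives $b_2>0$ with
\[
\inf_{\qn\in\Qn}\|s-\qn\|_{0,\Omega}\leq C\exp(-b_2\sqrt[3]{\NV}).
\]
The $\h\p$-best interpolation estimate in the Poisson-like virtual element space $\Vntilde$ from~\cite[Section~5]{hpVEMcorner} (which uses~(\textbf{A4}-$hp$) on the corner layer $L^0_n$) gives $b_3>0$ with
\[
\inf_{\vntilde\in\Vntilde}|\ubf-\vntilde|_{1,\Omega}\leq C\exp(-b_3\sqrt[3]{\NV}).
\]
Finally, writing $(\Id-\Pizpmt)\fbf$ as data oscillation, the $\h\p$ polynomial approximation of the analytic $\fbf$ together with the definition of $\Fcaln$ yields $b_4>0$ with
\[
\Fcaln\leq C\exp(-b_4\sqrt[3]{\NV}).
\]

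\emph{Absorbing the polynomial prefactors.} On a geometric mesh with $n+1$ layers and degree distribution~\eqref{distribution:elements}, one has $\NV\simeq n^3$, hence the maximal degree $\p\simeq n\lesssim\sqrt[3]{\NV}$. By hypothesis $\gamma(\p)\leq c_\gamma\p^k$, so $\gamma(\p)^{3/2}\lesssim\NV^{k/2}$ and $\gamma(\p)^{3/2}\sqrt{\gamma(\p)}\lesssim\NV^{k}$, which are polynomial in $\NV$ and are therefore absorbed into the exponential: setting $b:=\tfrac12\min(b_1,b_2,b_3,b_4)$, \eqref{eq:apriori-up} gives
\[
|\ubf-\Pinablap\un|_{1,\taun}+\beta_n\|s-\sn\|_{0,\Omega}\leq C\exp(-b\sqrt[3]{\NV}).
\]

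\emph{Conclusion.} By Lemma~\ref{lemma:discrete-infsup}, $\beta_n\geq\beta_*>0$ with $\beta_*$ independent of $n$ and $\p$, hence $\|s-\sn\|_{0,\Omega}\leq\beta_*^{-1}\beta_n\|s-\sn\|_{0,\Omega}$, and the claimed estimate
\[
|\ubf-\Pinablap\un|_{1,\taun}+\|s-\sn\|_{0,\Omega}\leq C\exp(-b\sqrt[3]{\NV})
\]
follows.
\end{proof}
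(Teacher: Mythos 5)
Your proposal is correct and follows essentially the same route as the paper: it feeds the abstract estimate of Theorem~\ref{theorem:abstract} into the $\h\p$-approximation results of~\cite{hpVEMcorner} to get exponential bounds on each of the four terms, uses $\NV\simeq n^3$ from the geometric grading and the degree distribution~\eqref{distribution:elements}, and absorbs the algebraically growing factor $\gamma(\p)^{3/2}$ into the exponential. Your additional explicit steps (invoking Theorem~\ref{theorem:regularity2} for the weighted analytic regularity, and removing $\beta_n$ via Lemma~\ref{lemma:discrete-infsup}) are details the paper leaves implicit, and they are handled correctly.
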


\begin{proof}
Starting from the abstract error analysis in Theorem~\ref{theorem:abstract}, it suffices to be able to show $\h\p$-upper bounds on the four terms appearing on the right-hand side of~\eqref{eq:apriori-up}.
More precisely,
from \cite[Lemmata~2 and 3]{hpVEMcorner}, there exist constants~$C_1$ and~$b_1>0$ such that, for all~$n\in \mathbb{N}$,
\[
\inf_{\upi\in [\mathbb{P}_{\p}(\taun)]^2}|\ubf-\upi|_{1,\taun}  \leq C_1 \exp(-b_1 n)
\]
Furthermore, noting that the pressure $s$ has the same regularity as the components of the gradient of the velocity $\ubf$, with similar arguments, we deduce that there exist constants~$C_2$ and~$b_2>0$ such that, for all~$n\in \mathbb{N}$,
\[
\inf_{\qn \in \Qn}\|s-\qn\|_{0, \Omega}  \leq C_2 \exp(-b_2 n).
\]
Then, we deduce from~\cite[Lemmata 4 and 5]{hpVEMcorner} that there exist constants~$C_3$ and~$b_3>0$ such that, for all~$n\in \mathbb{N}$,
\[
    \Fcaln \leq C_3\exp(-b_3 n).
\]
Finally, the estimate 
\begin{equation*}
\inf_{\vntilde\in \Vntilde}|\ubf-\vntilde|_{1, \Omega}  \leq C_4 \exp(-b_4 n),
\end{equation*}
for constants $C_4, b_4>0$, independent of $n$ is a consequence of~\cite[Lemmata 6 and 7]{hpVEMcorner}.

We remark that \eqref{grading:parameter}, \eqref{grading:parameter2}, and~\eqref{distribution:elements} imply~$\card(\taun)\simeq n$, see, e.g., \cite[Equation (5.6)]{Hiptmair2014}.
Since $\dim(\VnE)\simeq p_{\E}^2$ and $\dim(\mathbb{P}_{p_\E}(\E))\simeq p_\E^2$ for each $\E\in\taun$, \eqref{distribution:elements} gives~$N_V\simeq n^3$.
Since $\gamma(p)$ grows at most algebraically in terms of~$\p$, we  absorb the term~$\gamma(p)^{\frac{3}{2}}$ appearing on the right-hand side constants.
This concludes the proof.
\end{proof}

The assumption in Theorem~\ref{theorem:hp-version} that~$\gamma(\p)$ grows at most algebraically in terms of~$\p$
is fulfilled, e.g., by the stabilization introduced in Remark~\ref{remark:explicit-STAB}.

\section{Numerical results} \label{section:numerical-experiments}
In this section, we present numerical results which validate the theoretical predictions of Theorems~\ref{theorem:p-version} and~\ref{theorem:p-version}: see Sections~\ref{subsection:p-version} and~\ref{subsection:hp-version}, respectively.

We perform the numerical experiments on the two following test cases.
\subsubsection*{Test case~1. }
Given~$\Omega_1 := (0,1)^2$, we consider the analytic solution
\begin{equation} \label{solution1}
\ubf_1 := 
\begin{pmatrix}
-0.5 \cos^2(\pi\, x - \frac{\pi}{2}) \cos(\pi\, y) \sin (\pi y)\\
 0.5 \cos^2(\pi\, y - \frac{\pi}{2}) \cos(\pi \, x) \sin (\pi \, x)
\end{pmatrix} , \quad \quad s _1:= \sin(\pi\, x) - \sin(\pi\, y).
\end{equation}
The boundary conditions of the velocity are homogeneous on the whole boundary. The right-hand side~$\fbf$ is computed accordingly.

\subsubsection*{Test case~2. }
As a second test case, we consider a singular function on the L-shaped domain~$\Omega_2 := (-1,1)^2 \setminus [0,1) \times (-1,0]$.
Let
\begin{equation} \label{alpha-omega}
\omega := 3\pi/2 , \quad \quad \alpha = 0.54448373678246\dots
\end{equation}
Note that $\alpha$ is the smallest positive solution to equation
\eqref{eq:sing-exp}, with $\fc = (0,0)$ and $\phi_{\fc} = \omega$.
Given~$(r,\theta)$ the polar coordinates at the re-entrant corner~$(0,0)$,
introduce the auxiliary function
\[
\psi(r,\theta) =\frac{\sin((1+\alpha) \theta) \cos(\alpha \omega) }{ 1+\alpha} - \cos((1+\alpha) \theta)
- \frac{ \sin((1-\alpha) \theta) \cos(\alpha \omega)  } {1-\alpha } + \cos((1-\alpha) \theta).
\]
The singular solution we approximate is
\begin{equation} \label{solution2}
\ubf_2 := 
\begin{pmatrix}
r^{\alpha} \left(  (1+\alpha) \sin(\theta) \psi (\theta) + \cos(\theta) \psi' (\theta)    \right)\\
r^{\alpha} \left(   \sin(\theta) \psi' (\theta)  - (1+\alpha) \cos(\theta) \psi (\theta)   \right)
\end{pmatrix} ,
\quad s _2:= r^{\alpha-1} \left(  (1+\alpha)^2 \psi'(\theta) +  \psi^{(3)} (\theta)     \right) / (1-\alpha).
\end{equation}
This solution is such that the Stokes equation is homogeneous, i.e., $\fbf =0$.
Moreover, the Dirichlet conditions are homogeneous along the edges abutting the re-entrant corner.

\subsubsection*{Meshes. }
We are interested in the $\p$- and $\h\p$-versions of the method.
The specific construction of the mesh is not central to the convergence
  properties of the $\p$-version. Therefore, we only employ uniform
Cartesian meshes both on the square domain~$\Omega_1$ and on the L-shaped domain~$\Omega_2$.
As for the meshes to employ for the $\h\p$-version, we postpone their construction to Section~\ref{subsection:hp-version} below.

\subsubsection*{Stabilization.}
In Remark~\ref{remark:explicit-STAB}, we introduced a stabilization with explicit bounds in~\eqref{eq:stab-constants} in terms of the degree of accuracy~$\p$.
Notwithstanding, in the forthcoming numerical experiments, we resort to the so-called D-recipe, see~\cite{bricksVEM}.
Given~$\E \in \taun$, introduce the local canonical basis~$\{\varphibold_j\}_{j=1}^{\dim(\VnE)}$ of the space~$\VnE$,
which is dual to the degrees of freedom~$\{\dofbf _j(\cdot)\}_{j=1}^{\dim(\VnE)}$ introduced in Section~\ref{subsection:VES-Stokes}.
We define
\[
\SED (\un,\vn) := \sum_{j=1}^{\dim(\VnE)} \max(1, \vert \Pinablap \varphibold_j \vert_{1,\E}) \dofbf_j(\un) \dofbf_j(\vn).
\]
It is known~\cite{fetishVEM, VEM3Dbasic} that stabilizations of this sort lead to effective performance of the method.

We highlight that we also tested the method with the stabilization~\eqref{eq:SEexplicit}, and this leads to results that are comparable to those that we present in the forthcoming sections.

\subsubsection*{Polynomial bases.}
We refer to~\cite{bricksVEM}, as for the choice of the polynomial bases.
We underline that this choice could be improved; see Remark~\ref{remark-ill-cond} below.

\subsubsection*{Errors.}
We are interested in the convergence rate of the two following quantities:
\[
\vert \ubf - \Pinablap \un \vert_{1,\taun} , \quad \quad \Vert s - \sn \Vert_{0,\E}.
\]
Indeed, Theorems~\ref{theorem:p-version} and~\ref{theorem:hp-version} provide upper bounds on such two quantities.

\subsection{The $\p$-version of the method} \label{subsection:p-version}
In this section, we present numerical results validating the theoretical predictions of Theorem~\ref{theorem:p-version} for the $\p$-version of the method.
We consider the exact solutions~$(\ubf_1, s_1)$ and~$(\ubf_2, s_2)$ in~\eqref{solution1} and~\eqref{solution2}, respectively.
We employ a coarse mesh of $2\times 2$ uniform squares on the domain~$\Omega_1$.

\begin{figure}  [h]
\centering
\includegraphics [angle=0, width=0.45\textwidth]{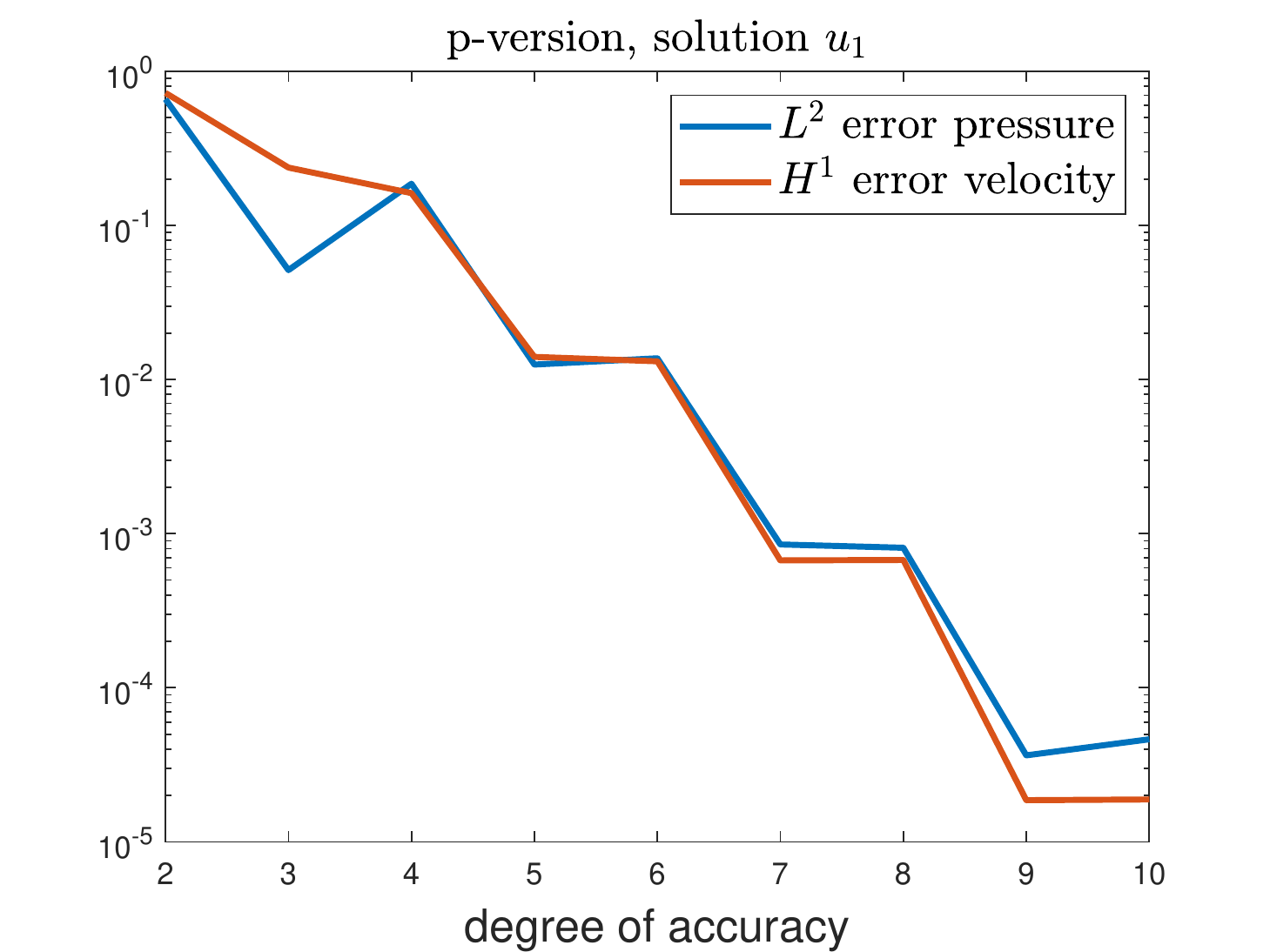}
\includegraphics [angle=0, width=0.45\textwidth]{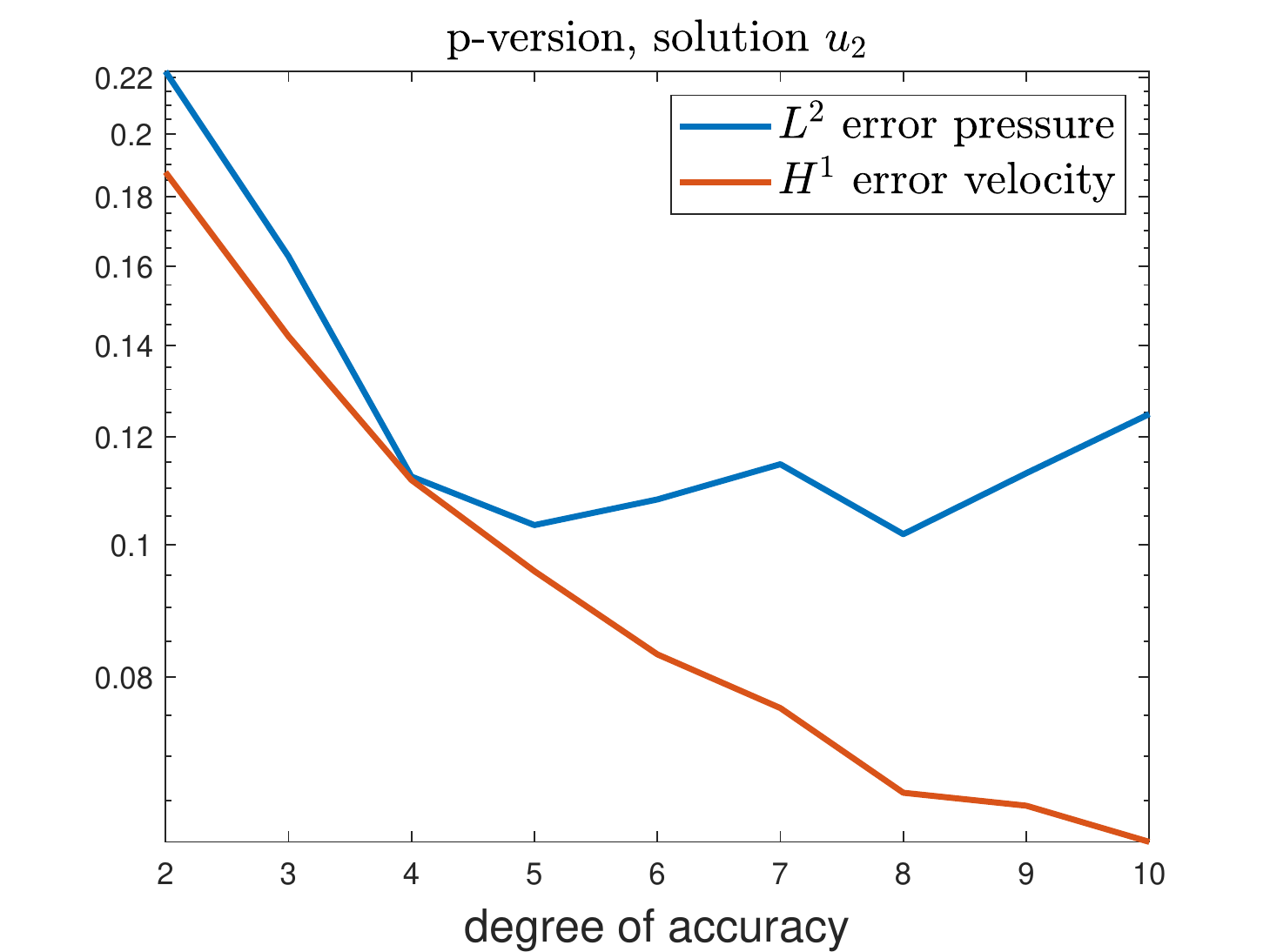}
\caption{$\p$-version of the method. We consider the exact solutions~$(\ubf_1, s_1)$ and~$(\ubf_2, s_2)$ defined in~\eqref{solution1} and~\eqref{solution2} in the left and right panel, respectively.
We plot the errors~$\Vert s_j - \sn \Vert_{0,\Omega}$ and~$\vert \ubf_j - \un \vert_{1,\taun}$, for~$j=1,2$.
We employ a coarse mesh of~$2 \times 2$ uniform squares.}
\label{figure:p-version}
\end{figure}

As expected from the theoretical predictions, in Figure~\ref{figure:p-version},
we observe exponential convergence for the test case with smooth solution, and only algebraic convergence for the singular solution case.

\begin{remark} \label{remark-ill-cond}
For the exact solution~$\ubf _2$, the~$L^2$ error on the pressure stagnates at around~$\p=4$ and then grows.
Similarly, the~$H^1$ error stagnates starting from~$\p=9$.
This behaviour can be traced back to the ill-conditioning of the  resulting  linear system, which is mainly due to the choice of the polynomial bases in the definition of the degrees of freedom
and in the expansion of the polynomial projectors.
A possible remedy to this problem might be an orthogonalization process of the polynomial bases; see, e.g., \cite{fetishVEM}.
For the sake of clarity, we avoid such investigation here.
\end{remark}

\subsection{The $\h\p$-version of the method} \label{subsection:hp-version}
As predicted in Theorem~\ref{theorem:p-version} and observed in Figure~\ref{figure:p-version} numerically,
the method converges in terms of the degree of accuracy~$\p$ algebraically, whenever the exact solution is not analytic.
However, as discussed in Theorems~\ref{theorem:regularity} and~\ref{theorem:regularity2},
solutions to the Stokes problem~\eqref{Stokes:weak} on polygonal domains with smooth data
belong to the Kondrat'ev spaces~$\cK^{\varpi}_\ugamma(\Omega) $ in~\eqref{eq:cKanalytic}.
In general, for solutions~$(\ubf, s)$ to the Stokes problem in a nonconvex domain $\Omega$, we can expect $\ubf\in \left[H^k(\Omega)  \right]^2$ and $s \in H^{k-1}(\Omega)$ for a given~$k<2$ only.

Exponential convergence can be recovered for weighted analytic functions, by employing $\h\p$-approximation spaces,
following the gospel of Babu\v ska and collaborators, as proven in Theorem~\ref{theorem:hp-version}.
See also, e.g., \cite{BabuGuo_hpFEM, babuskaguo_curvilinearhpFEM, SchwabpandhpFEM} and the references therein.

Thus, in this section, we validate the theoretical predictions of Theorem~\ref{theorem:hp-version}. To this aim, we consider the test case with exact solution~$(\ubf_2, s_2)$ in~\eqref{solution2}.
We construct the distribution of the degrees of accuracy by picking~$\mu=1$ in~\eqref{distribution:elements}.
Moreover, we employ $\h\p$-virtual element spaces based on geometric meshes as those depicted in Figure~\ref{figure:layers-Lshaped}.
There, we depict meshes with three layers, which are geometrically refined towards the re-entrant corner~$(0,0)$ in three different ways.
The numbers within the elements represent the local degrees of accuracy of the method.

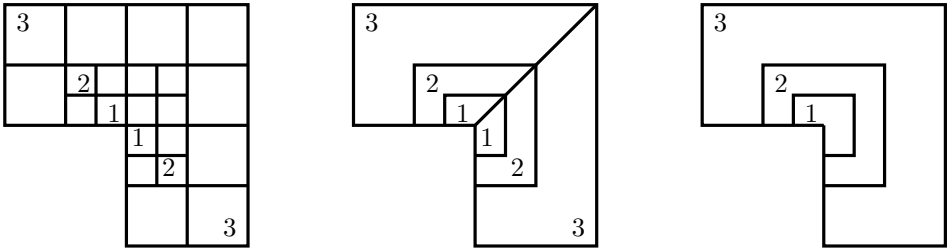
\begin{figure}  [h]
\begin{center} 
  \begin{minipage}{0.30\textwidth}
    \begin{tikzpicture}[scale=0.4]
     \draw[black, very thick, -] (0,0) -- (0,-4) -- (4,-4) -- (4,4) -- (-4,4) -- (-4,0) -- (0,0);
     \draw[black, very thick, -] (-2,0) -- (-2,4); \draw[black, very thick, -] (0,0) -- (0,4); \draw[black, very thick, -] (2,-4) -- (2,4);
	\draw[black, very thick, -] (-1,0) -- (-1,2); \draw[black, very thick, -] (1,-2) -- (1,2);
	\draw[black, very thick, -] (0,-2) -- (4,-2);  \draw[black, very thick, -] (0,0) -- (4,0); \draw[black, very thick, -] (-4,2) -- (4,2); 
	\draw[black, very thick, -] (-2,1) -- (2,1); \draw[black, very thick, -] (0,-1) -- (2,-1);
\draw(-3.4,3.4) node[black] {{$3$}}; \draw(3.4, -3.4) node[black] {{$3$}};
\draw(-1.4,1.4) node[black] {{$2$}}; \draw(1.4, -1.4) node[black] {{$2$}};
\draw(-.4,.4) node[black] {{$1$}}; \draw(.4, -.4) node[black] {{$1$}};
    \end{tikzpicture}
  \end{minipage}
  \begin{minipage}{0.30\textwidth}
    \begin{tikzpicture}[scale=0.4]
     \draw[black, very thick, -] (0,0) -- (0,-4) -- (4,-4) -- (4,4) -- (-4,4) -- (-4,0) -- (0,0);
      \draw[black, very thick, -] (0,-2) -- (2,-2) -- (2,2) -- (-2,2) -- (-2,0);
      \draw[black, very thick, -] (0,-1) -- (1,-1) -- (1,1) -- (-1,1) -- (-1,0);
      \draw[black, very thick, -] (0,0) -- (4,4);
      \draw(-3.4,3.4) node[black] {{$3$}}; \draw(3.4, -3.4) node[black] {{$3$}};
      \draw(-1.4,1.4) node[black] {{$2$}}; \draw(1.4, -1.4) node[black] {{$2$}};
      \draw(-.4,.4) node[black] {{$1$}}; \draw(.4, -.4) node[black] {{$1$}};
    \end{tikzpicture}
  \end{minipage}
  \begin{minipage}{0.33\textwidth}
    \begin{tikzpicture}[scale=0.4]
     \draw[black, very thick, -] (0,0) -- (0,-4) -- (4,-4) -- (4,4) -- (-4,4) -- (-4,0) -- (0,0);
      \draw[black, very thick, -] (0,-2) -- (2,-2) -- (2,2) -- (-2,2) -- (-2,0);
      \draw[black, very thick, -] (0,-1) -- (1,-1) -- (1,1) -- (-1,1) -- (-1,0);
      \draw(-3.4,3.4) node[black] {{$3$}}; 
      \draw(-1.4,1.4) node[black] {{$2$}}; 
      \draw(-.4,.4) node[black] {{$1$}}; 
    \end{tikzpicture}
  \end{minipage}
\end{center}
\caption{Examples of meshes that are geometrically refined towards the
  re-entrant corner~$(0,0)$. Here, the grading parameter~$\sigma$
  satisfying~\eqref{grading:parameter} and~\eqref{grading:parameter2} is~$1/2$.
The numbers in the elements denote the local degree of accuracy. In particular, we have picked~$\mu=1$ in~\eqref{distribution:elements}.}
\label{figure:layers-Lshaped}
\end{figure}

In Figures~\ref{figure:hp-version:BabuSchwab}, \ref{figure:hp-version:Chernov-cut}, and~\ref{figure:hp-version:Chernov-no-cut},
we depict the decay of the errors in~\eqref{computable:errors} employing $\h\p$-virtual element spaces based on meshes as those in Figure~\ref{figure:layers-Lshaped}.
We pick different choices of the grading parameter~$\sigma$.

\begin{figure}  [h]
\centering
\includegraphics [angle=0, width=0.45\textwidth]{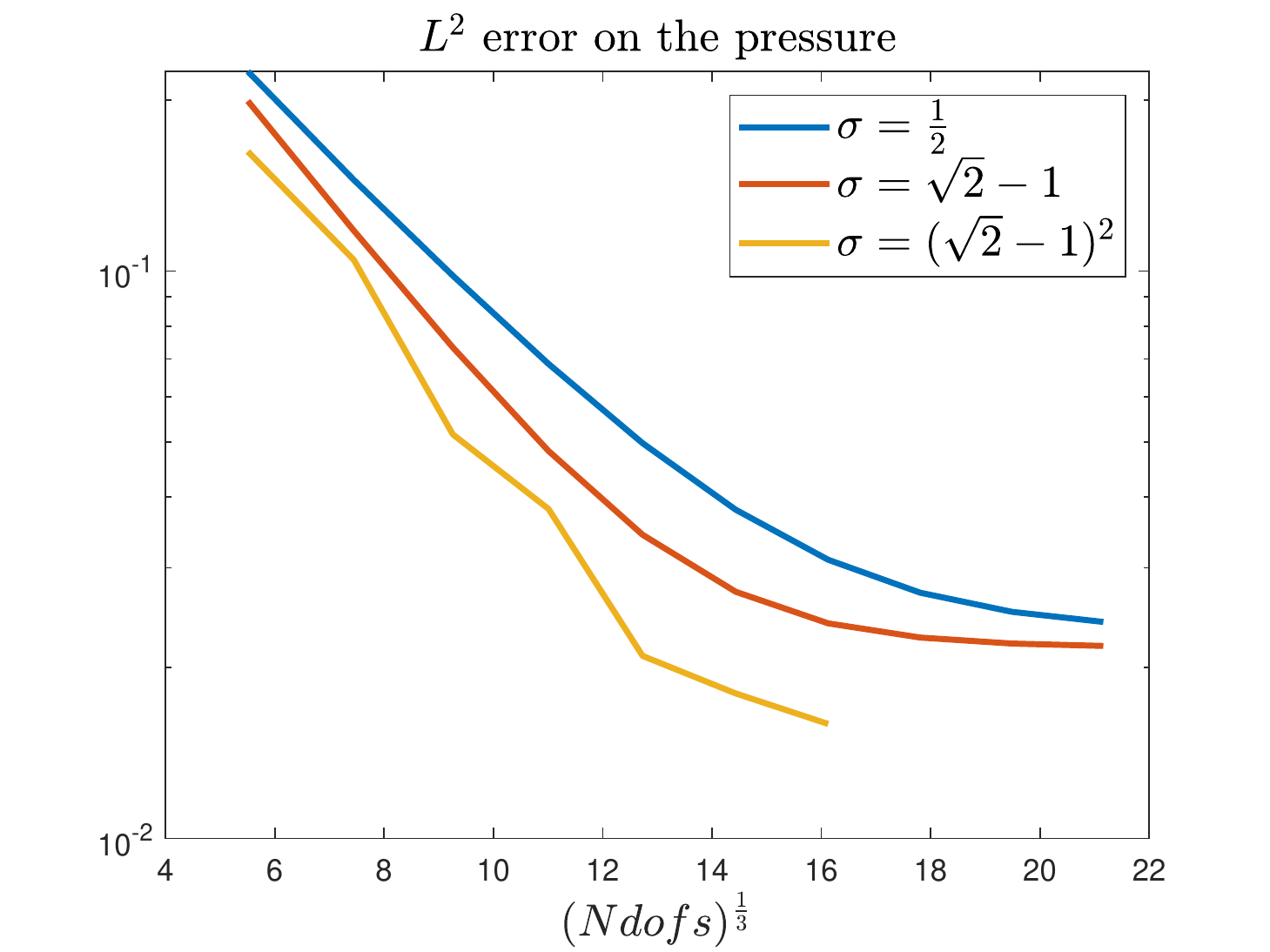}
\includegraphics [angle=0, width=0.45\textwidth]{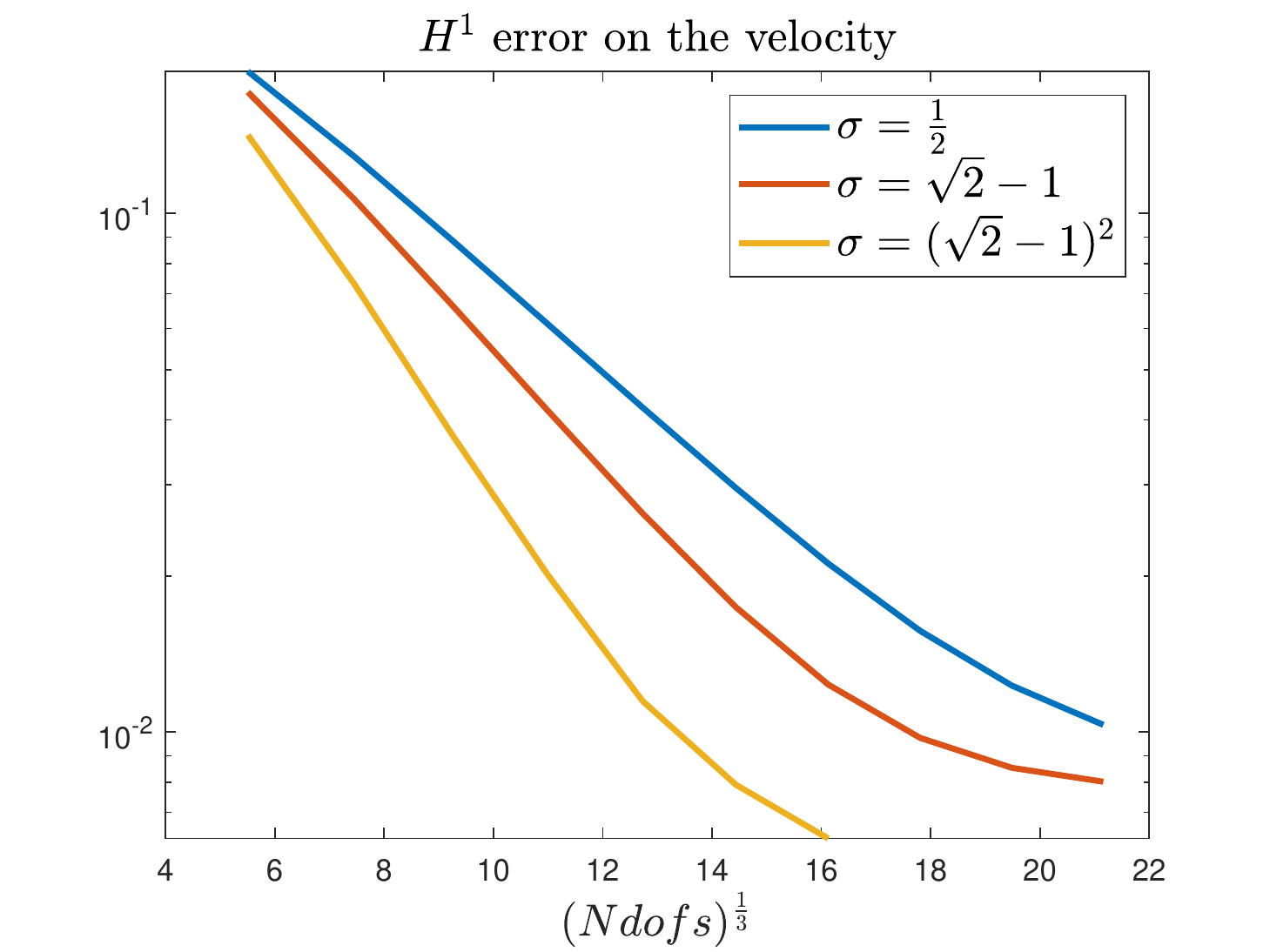}
\caption{$\h\p$-version of the method. We consider the exact solution~$(\ubf_2, s_2)$ defined in~\eqref{solution2}.
Left panel: $\Vert s_j - \sn \Vert_{0,\Omega}$, $j=1,2$. Right panel: $\vert \ubf_j - \un \vert_{1,\taun}$, $j=1,2$.
We employ meshes that are geometrically refined towards the re-entrant corner as those in Figure~\ref{figure:layers-Lshaped} (left).
We pick three different choices of the parameter~$\sigma$ satisfying~\eqref{grading:parameter} and~\eqref{grading:parameter2}, namely $\sigma=\frac{1}{2}$, $\sigma=\sqrt 2 - 1$, and~$\sigma=(\sqrt 2 -1)^2$.}
\label{figure:hp-version:BabuSchwab}
\end{figure}

\begin{figure}  [h]
\centering
\includegraphics [angle=0, width=0.45\textwidth]{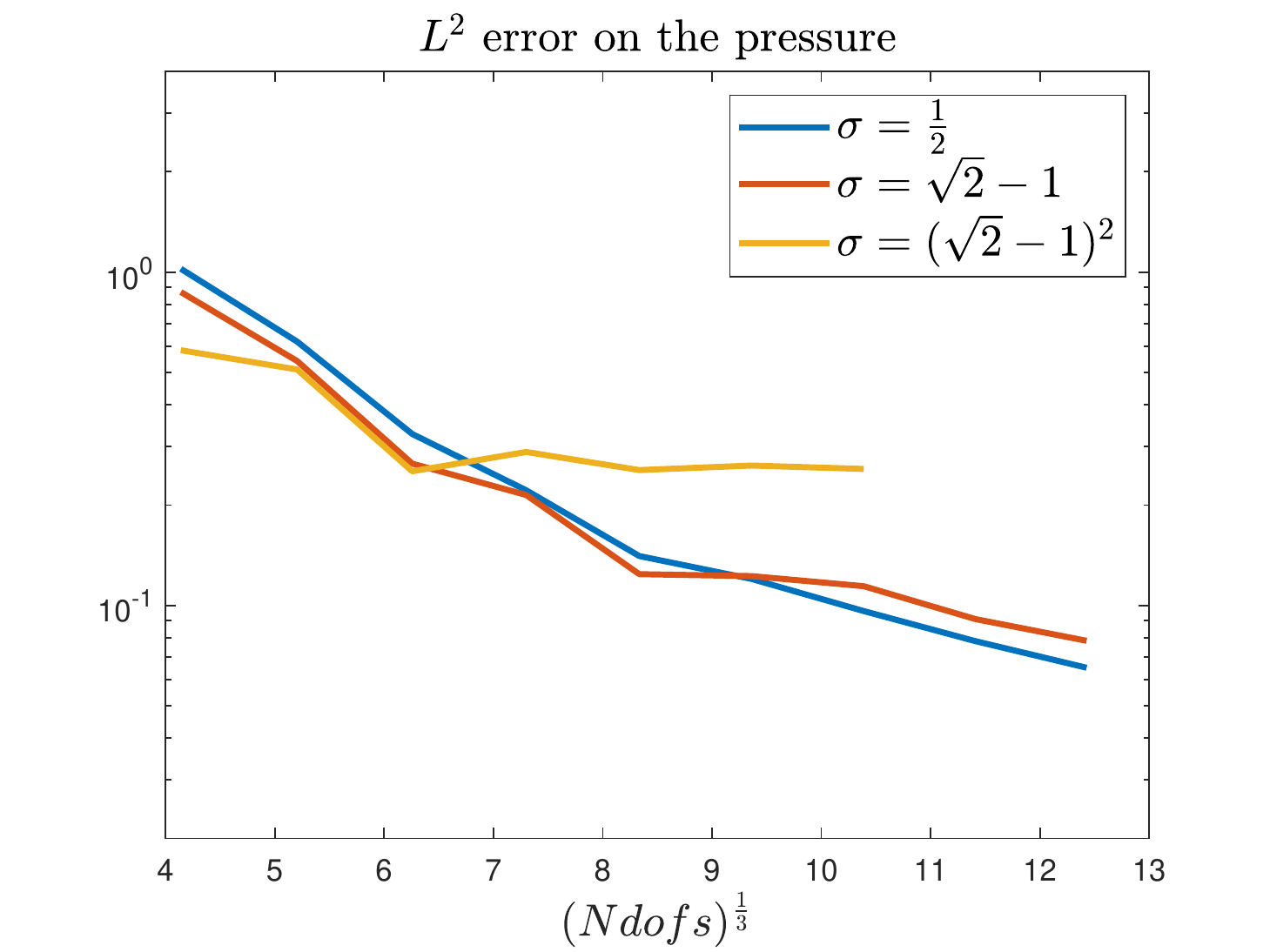}
\includegraphics [angle=0, width=0.45\textwidth]{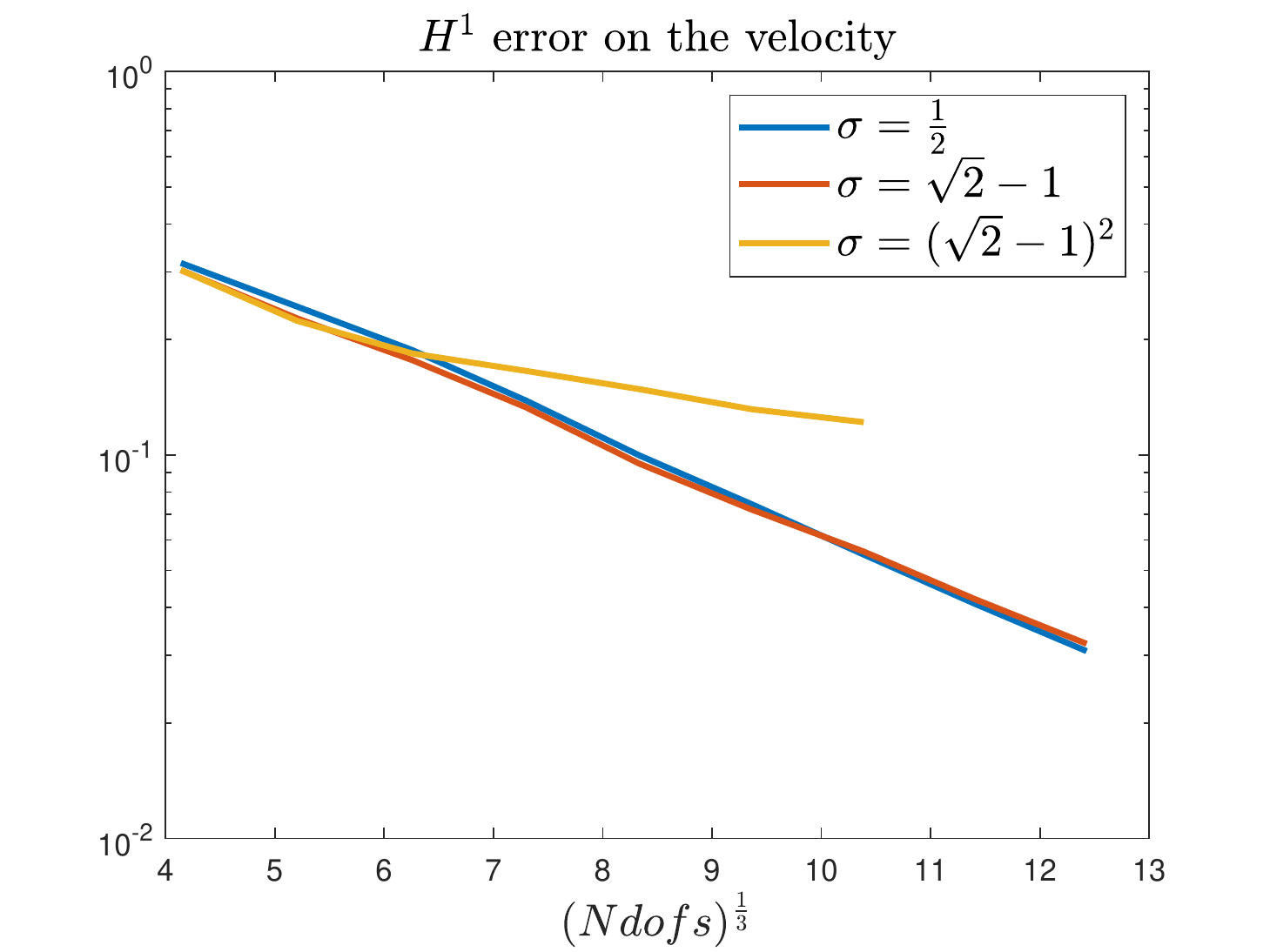}
\caption{$\h\p$-version of the method. We consider the exact solution~$(\ubf_2, s_2)$ defined in~\eqref{solution2}.
We employ meshes that are geometrically refined towards the re-entrant corner as those in Figure~\ref{figure:layers-Lshaped} (centre).
We pick three different choices of the parameter~$\sigma$ satisfying~\eqref{grading:parameter} and~\eqref{grading:parameter2}, namely $\sigma=\frac{1}{2}$, $\sigma=\sqrt 2 - 1$, and~$\sigma=(\sqrt 2 -1)^2$.}
\label{figure:hp-version:Chernov-cut}
\end{figure}

\begin{figure}  [h]
\centering
\includegraphics [angle=0, width=0.45\textwidth]{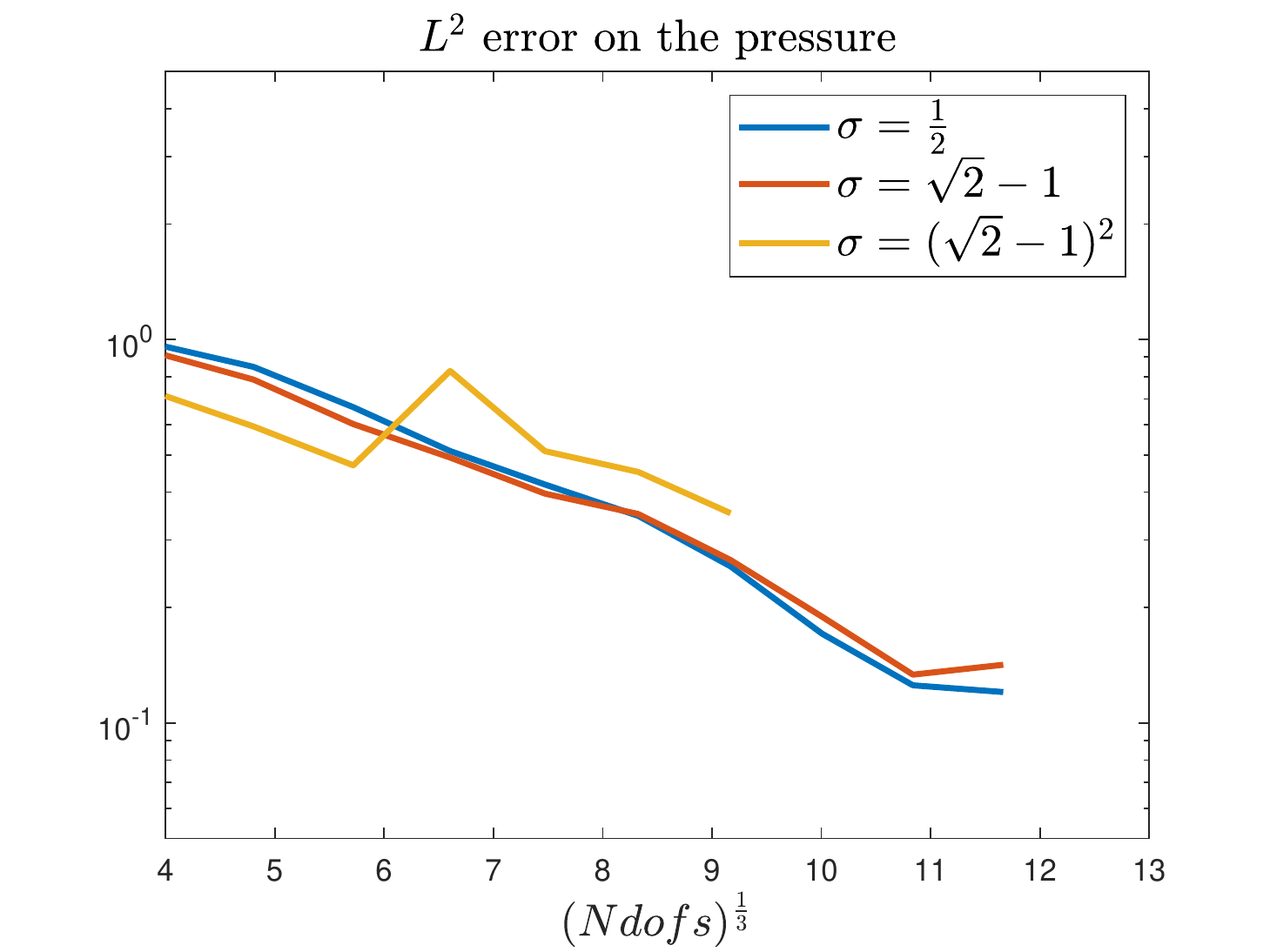}
\includegraphics [angle=0, width=0.45\textwidth]{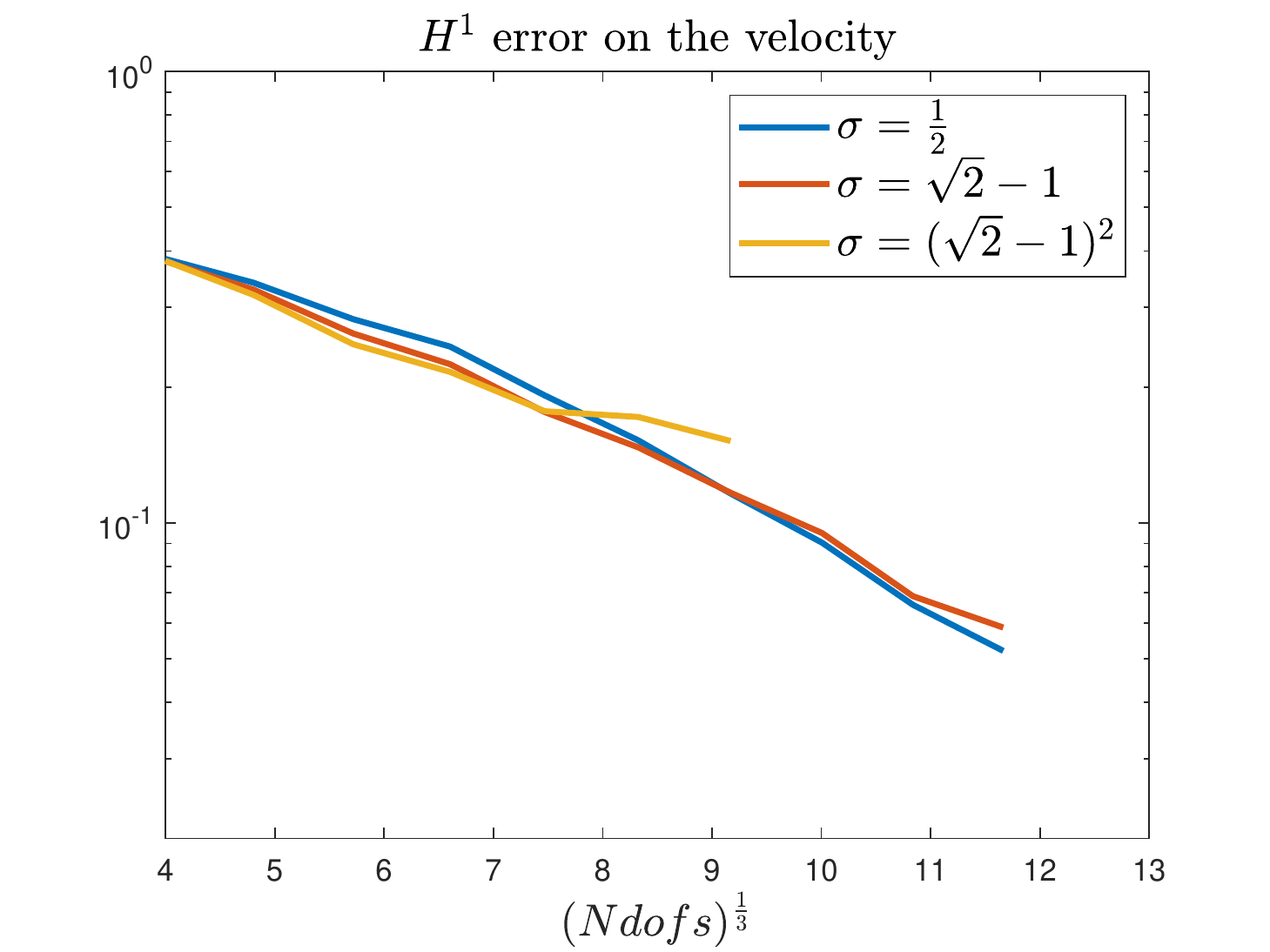}
\caption{$\h\p$-version of the method. We consider the exact solution~$(\ubf_2, s_2)$ defined in~\eqref{solution2}.
Left panel: $\Vert s_j - \sn \Vert_{0,\Omega}$, $j=1,2$. Right panel: $\vert \ubf_j - \un \vert_{1,\taun}$, $j=1,2$.
We employ meshes that are geometrically refined towards the re-entrant corner as those in Figure~\ref{figure:layers-Lshaped} (right).
We pick three different choices of the parameter~$\sigma$ satisfying~\eqref{grading:parameter} and~\eqref{grading:parameter2}, namely $\sigma=\frac{1}{2}$, $\sigma=\sqrt 2 - 1$, and~$\sigma=(\sqrt 2 -1)^2$.} 
\label{figure:hp-version:Chernov-no-cut}
\end{figure}

We observe exponential decay of the errors.
The error saturation due to ill-conditioning manifests itself earlier for some of the meshes depicted in Figure~\ref{figure:layers-Lshaped}.
See Remark~\ref{remark-ill-cond} for further details on this point.

\section{Conclusions} \label{section:conclusions}
We have analysed the $\p$- and $\h\p$-versions of the virtual element method for a 2D Stokes problem on polygonal domains.
In particular, we have shown that the $\h\p$-VEM converges with exponential rate to the solution of Stokes problems in polygonal domains, with smooth right-hand side.
In addition, we have proven algebraic and exponential convergence rate of
the $\p$-version of the method for solutions with (sufficiently high) finite
  Sobolev regularity and for analytic solutions, respectively.
The novel technical tool we introduced in this work is the proof of the existence of a bijection operator between Poisson-like and Stokes-like virtual element spaces for the velocity.
This allows us to leverage known results from the analysis of the Poisson problem in a straightforward manner.
The numerical experiments we performed validate and extend the theoretical results.
Future investigations will cover the analysis of $\p$- and $\h\p$-VEM for the Navier-Stokes equation and three dimensional problems.

\bibliographystyle{plain}
{\footnotesize
\bibliography{bibliogr}

\begin{thebibliography}{10}
\providecommand{\url}[1]{{#1}}
\providecommand{\urlprefix}{URL }
\expandafter\ifx\csname urlstyle\endcsname\relax
  \providecommand{\doi}[1]{DOI~\discretionary{}{}{}#1}\else
  \providecommand{\doi}{DOI~\discretionary{}{}{}\begingroup
  \urlstyle{rm}\Url}\fi

\bibitem{adamsfournier}
Adams, R.A., Fournier, J.J.F.: Sobolev {S}paces, vol. 140.
\newblock Academic {P}ress (2003)

\bibitem{streamvirtualelementformulationstokesproblempolygonalmeshes}
Antonietti, P.F., {Beir{\~a}o da Veiga}, L., Mora, D., Verani, M.: A stream
  virtual element formulation of the {S}tokes problem on polygonal meshes.
\newblock SIAM J. Numer. Anal. \textbf{52}(1), 386--404 (2014)

\bibitem{BabuGuo_hpFEM}
Babu{\v{s}}ka, I., Guo, B.Q.: The $hp$ version of the finite element method.
\newblock Comput. Mech. \textbf{1}(1), 21--41 (1986)

\bibitem{babuskaguo_curvilinearhpFEM}
Babu{\v{s}}ka, I., Guo, B.Q.: The $hp$ version of the finite element method for
  domains with curved boundaries.
\newblock SIAM J. Numer. Anal. \textbf{25}(4), 837--861 (1988)

\bibitem{VEMvolley}
{Beir{\~a}o da Veiga}, L., Brezzi, F., Cangiani, A., Manzini, G., Marini, L.,
  Russo, A.: Basic principles of virtual element methods.
\newblock Math. Models Methods Appl. Sci. \textbf{23}(01), 199--214 (2013)

\bibitem{hpVEMbasic}
{Beir{\~a}o da Veiga}, L., Chernov, A., Mascotto, L., Russo, A.: Basic
  principles of $hp$ virtual elements on quasiuniform meshes.
\newblock Math. Models Methods Appl. Sci. \textbf{26}(8), 1567--1598 (2016)

\bibitem{hpVEMcorner}
{Beir{\~a}o da Veiga}, L., Chernov, A., Mascotto, L., Russo, A.: Exponential
  convergence of the $hp$ virtual element method with corner singularity.
\newblock Numer. Math. \textbf{138}(3), 581--613 (2018)

\bibitem{VEM3Dbasic}
{Beir{\~a}o da Veiga}, L., Dassi, F., Russo, A.: High-order virtual element
  method on polyhedral meshes.
\newblock Comput. Math. Appl. \textbf{74}(5), 1110--1122 (2017)

\bibitem{VEM-Stokes-complex-3D}
{Beir{\~a}o da Veiga}, L., Dassi, F., Vacca, G.: The {S}tokes complex for
  virtual elements in three dimensions.
\newblock Math. Models Meth. Appl. Sci. \textbf{30}(03), 477--512 (2020)

\bibitem{beiraolovadinarusso_stabilityVEM}
{Beir{\~a}o da Veiga}, L., Lovadina, C., Russo, A.: Stability analysis for the
  virtual element method.
\newblock Math. Models Methods Appl. Sci. \textbf{27}(13), 2557--2594 (2017)

\bibitem{BLV_StokesVEMdivergencefree}
{Beir{\~a}o da Veiga}, L., Lovadina, C., Vacca, G.: Divergence free virtual
  elements for the {S}tokes problem on polygonal meshes.
\newblock ESAIM Math. Model. Numer. Anal. \textbf{51}(2), 509--535 (2017)

\bibitem{NavierStokesVEM}
{Beir{\~a}o da Veiga}, L., Lovadina, C., Vacca, G.: Virtual elements for the
  {N}avier--{S}tokes problem on polygonal meshes.
\newblock SIAM J. Numer. Anal. \textbf{56}(3), 1210--1242 (2018)

\bibitem{NavierStokes-complex-VEM}
{Beir{\~a}o da Veiga}, L., Mora, D., Vacca, G.: The {S}tokes complex for
  virtual elements with application to {N}avier--{S}tokes flows.
\newblock J. Sci. Comput. \textbf{81}(2), 990--1018 (2019)

\bibitem{bernardi2001error}
Bernardi, C., Fi{\'e}tier, N., Owens, R.G.: An error indicator for mortar
  element solutions to the {S}tokes problem.
\newblock IMA J. Numer. Anal. \textbf{21}(4), 857--886 (2001)

\bibitem{BrezziFortin}
Boffi, D., Brezzi, F., Fortin, M.: Mixed {F}inite {E}lement {M}ethods and
  {A}pplications, vol.~44.
\newblock Springer Series in Computational Mathematics (2013)

\bibitem{brennerVEMsmall}
Brenner, S.C., Sung, L.Y.: Virtual element methods on meshes with small edges
  or faces.
\newblock Math. Models Methods Appl. Sci. \textbf{268}(07), 1291--1336 (2018)

\bibitem{Gatica-1}
C{\'a}ceres, E., Gatica, G.N.: A mixed virtual element method for the
  pseudostress-velocity formulation of the {S}tokes problem.
\newblock IMA J. Numer. Anal. \textbf{37}(1), 296--331 (2017)

\bibitem{caceres2017mixed}
C{\'a}ceres, E., Gatica, G.N., Sequeira, F.A.: A mixed virtual element method
  for the {B}rinkman problem.
\newblock Math. Models Meth. Appl. Sci. \textbf{27}(04), 707--743 (2017)

\bibitem{caceres2018mixed}
C{\'a}ceres, E., Gatica, G.N., Sequeira, F.A.: A mixed virtual element method
  for quasi-{N}ewtonian {S}tokes flows.
\newblock SIAM J. Numer. Anal. \textbf{56}(1), 317--343 (2018)

\bibitem{CGM_nonconformingStokes}
Cangiani, A., Gyrya, V., Manzini, G.: The non-conforming virtual element method
  for the {S}tokes equations.
\newblock SIAM J. Numer. Anal. \textbf{54}(6), 3411--3435 (2016)

\bibitem{cao2018anisotropic}
Cao, S., Chen, L.: Anisotropic error estimates of the linear virtual element
  method on polygonal meshes.
\newblock SIAM J. Numer. Anal. \textbf{56}(5), 2913--2939 (2018)

\bibitem{costabel2015inequalities}
Costabel, M., Dauge, M.: On the inequalities of {B}abu{\v{s}}ka--{A}ziz,
  {F}riedrichs and {H}organ--{P}ayne.
\newblock Arch. Ration. Mech. Anal. \textbf{217}(3), 873--898 (2015)

\bibitem{bricksVEM}
Dassi, F., Vacca, G.: Bricks for the mixed high-order virtual element method:
  {P}rojectors and differential operators.
\newblock Appl. Numer. Math. \textbf{155}, 140--159 (2020)

\bibitem{gatica2018mixed}
Gatica, G.N., Munar, M., Sequeira, F.A.: A mixed virtual element method for the
  {N}avier-{S}tokes equations.
\newblock Math. Models Methods Appl. Sci \textbf{28}(14), 2719--2762 (2018)

\bibitem{gerdes1999hp}
Gerdes, K., Sch{\"o}tzau, D.: $hp$-finite element simulations for {S}tokes
  flow--stable and stabilized.
\newblock Finite Elem. Anal. Des. \textbf{33}(3), 143--165 (1999)

\bibitem{Guo2006}
Guo, B.Q., Schwab, C.: Analytic regularity of {S}tokes flow on polygonal
  domains in countably weighted {S}obolev spaces.
\newblock J. Comput. Appl. Math. \textbf{190}(1-2), 487--519 (2006)

\bibitem{Hiptmair2014}
Hiptmair, R., Moiola, A., Perugia, I., Schwab, C.: Approximation by harmonic
  polynomials in star-shaped domains and exponential convergence of {T}refftz
  $hp$-d{G}{F}{E}{M}.
\newblock ESAIM Math. Model. Numer. Anal. \textbf{48}(3), 727--752 (2014)

\bibitem{paul2005energy}
Houston, P., Sch{\"o}tzau, D., Wihler, T.P.: Energy norm shape a posteriori
  error estimation for mixed discontinuous {G}alerkin approximations of the
  {S}tokes problem.
\newblock J. Sci. Comput. \textbf{22}(1-3), 347--370 (2005)

\bibitem{irisarri2019stabilized}
Irisarri, D., Hauke, G.: Stabilized virtual element methods for the unsteady
  incompressible {N}avier--{S}tokes equations.
\newblock Calcolo \textbf{56}(4), 38 (2019)

\bibitem{Kondratev1967}
Kondrat'ev, V.A.: Boundary value problems for elliptic equations in domains
  with conical or angular points.
\newblock Trudy {M}oskovskogo {M}atematicheskogo {O}bshchestva \textbf{16},
  209--292 (1967)

\bibitem{Kozlov2001}
Kozlov, V.A., Maz'ya, V.G., Rossmann, J.: Spectral problems associated with
  corner singularities of solutions to elliptic equations, \emph{Mathematical
  Surveys and Monographs}, vol.~85.
\newblock American Mathematical Society, Providence, RI (2001)

\bibitem{nc_VEM_NavierStokes}
Liu, X., Chen, Z.: The nonconforming virtual element method for the
  {N}avier-{S}tokes equations.
\newblock Adv. Comput. Math. \textbf{45}(1), 51--74 (2019)

\bibitem{liu2017nonconforming}
Liu, X., Li, J., Chen, Z.: A nonconforming virtual element method for the
  {S}tokes problem on general meshes.
\newblock Comput. Methods Appl. Mech. Engrg. \textbf{320}, 694--711 (2017)

\bibitem{SMS20_888}
Marcati, C., Sch\"otzau, D., Schwab, C.: Exponential convergence of mixed
  $hp$-{DGFEM} for the incompressible {N}avier-{S}tokes equations in
  $\mathbb{R}^2$.
\newblock Tech. Rep. 2020-15, Seminar for Applied Mathematics, ETH Z{\"u}rich,
  Switzerland (2020).
\newblock
  \urlprefix\url{https://www.sam.math.ethz.ch/sam_reports/reports_final/reports2020/2020-15.pdf}

\bibitem{Marcati2019}
Marcati, C., Schwab, C.: Analytic regularity for the incompressible
  {N}avier-{S}tokes equations in polygons.
\newblock SIAM J. Math. Anal.  (2020).
\newblock \doi{10.1137/19M1247334}.
\newblock In press

\bibitem{fetishVEM}
Mascotto, L.: Ill-conditioning in the virtual element method: stabilizations
  and bases.
\newblock Numer. Methods Partial Differential Equations \textbf{34}(4),
  1258--1281 (2018)

\bibitem{schotzau2002mixed}
Sch{\"o}tzau, D., Schwab, C., Toselli, A.: Mixed $hp$-{DGFEM} for
  incompressible flows.
\newblock SIAM J. Numer. Anal. \textbf{40}(6), 2171--2194 (2002)

\bibitem{schotzau2003stabilized}
Sch{\"o}tzau, D., Schwab, C., Toselli, A.: Stabilized $hp$-{DGFEM} for
  incompressible flow.
\newblock Math. Models Methods Appl. Sci. \textbf{13}(10), 1413--1436 (2003)

\bibitem{schotzau2004mixed}
Sch{\"o}tzau, D., Schwab, C., Toselli, A.: Mixed $hp$-{DGFEM} for
  incompressible flows {II}: {G}eometric edge meshes.
\newblock IMA J. Numer. Anal. \textbf{24}(2), 273--308 (2004)

\bibitem{schotzau2003exponential}
Sch{\"o}tzau, D., Wihler, T.P.: Exponential convergence of mixed $hp$-{DGFEM}
  for {S}tokes flow in polygons.
\newblock Numer. Math. \textbf{96}(2), 339--361 (2003)

\bibitem{SchwabpandhpFEM}
Schwab, C.: $p$- and $hp$- {F}inite {E}lement {M}ethods: {T}heory and
  {A}pplications in {S}olid and {F}luid {M}echanics.
\newblock Clarendon Press Oxford (1998)

\bibitem{schwab1999mixed}
Schwab, C., Suri, M.: Mixed $hp$ finite element methods for {S}tokes and
  non-{N}ewtonian flow.
\newblock Comput. Methods Appl. Mech. Engrg. \textbf{175}(3-4), 217--241 (1999)

\bibitem{shan2017triangular}
Shan, W., Li, H.: The triangular spectral element method for {S}tokes
  eigenvalues.
\newblock Math. Comp. \textbf{86}(308), 2579--2611 (2017)

\bibitem{vacca2018h}
Vacca, G.: An ${H}^1$-conforming virtual element for {D}arcy and {B}rinkman
  equations.
\newblock Math. Models Methods Appl. Sci. \textbf{28}(01), 159--194 (2018)

\bibitem{wang2019divergence}
Wang, G., Wang, F., Chen, L., He, Y.: A divergence free weak virtual element
  method for the {S}tokes--{D}arcy problem on general meshes.
\newblock Comput. Methods Appl. Mech. Engrg. \textbf{344}, 998--1020 (2019)

\end{thebibliography}
}


\end{document}